\documentclass[12pt]{article}
\usepackage[T1]{fontenc}
\usepackage[utf8]{inputenc}
\usepackage{authblk}
\usepackage[misc]{ifsym}
\usepackage{dsfont}
\usepackage{mathrsfs}
\usepackage{amsmath,amssymb}
\usepackage{amsfonts}

\usepackage{amsthm}
\usepackage{graphicx}
\usepackage{subfigure}
\usepackage{xcolor}

\usepackage{bbm}
\usepackage{mathrsfs}
\usepackage{txfonts}
\usepackage{color}
\usepackage{pict2e}
\usepackage{float}
\usepackage{palatino,epsfig,latexsym}
\usepackage{epsf,xy,epic,amscd}
\usepackage{lineno}

\usepackage{bibentry}
\usepackage[colorlinks,linkcolor=blue,anchorcolor=blue,citecolor=blue,]{hyperref}

\theoremstyle{plain}
\newtheorem{theorem}{Theorem}[section]
\newtheorem{lemma}[theorem]{Lemma}

\newtheorem{proposition}[theorem]{Proposition}
\newtheorem{corollary}[theorem]{Corollary}

\newtheorem{Bounded Diameter Lemma}[theorem]{Bounded Diameter Lemma}
\theoremstyle{definition}
\newtheorem{definition}[theorem]{Definition}

\newtheorem{remark}[theorem]{Remark}

\newcommand{\Hmm}[1]{\leavevmode{\marginpar{\tiny%
$\hbox to 0mm{\hspace*{-0.5mm}$\leftarrow$\hss}%
\vcenter{\vrule depth 0.1mm height 0.1mm width \the\marginparwidth}%
\hbox to
0mm{\hss$\rightarrow$\hspace*{-0.5mm}}$\\\relax\raggedright #1}}}

\hfuzz=\maxdimen
\tolerance=10000
\hbadness=10000

\DeclareFixedFont{\Acknowledgment}{OT1}{cmr}{bx}{n}{14pt}
\textwidth 150mm \textheight 200mm \hoffset -1.2cm \voffset -0.5cm
\linespread{1.1}

\begin{document}
\numberwithin{equation}{section}

\title{On  the regularity 
	 of Dirichlet problem for fully non-linear elliptic equations on Hermitian manifolds
 }
 \author{Rirong Yuan\thanks{School of Mathematics, \,South China University of Technology, \,Guangzhou  510641,   \,China \\ \indent \indent Email address:\,yuanrr@scut.edu.cn
 }
}

\date{}

\maketitle

\begin{abstract}
We derive the solvability and regularity of the Dirichlet problem for fully non-linear elliptic equations possibly with degenerate right-hand side on Hermitian manifolds,  through establishing a quantitative version of boundary estimate under a subsolution assumption. 
 In addition, we construct the subsolution
when the background manifold is a product of a closed Hermitian manifold with a compact  
Riemann surface with boundary.

 

\end{abstract}

\tableofcontents

\section{Introduction}

Let $(M,J,\omega)$ be a compact Hermitian manifold of complex dimension $n\geq 2$ with boundary $\partial M$, where 
$\omega=\sqrt{-1} g_{i \bar{j}} d z^{i} \wedge d \bar z^{j}$
 denotes the K\"{a}hler form compatible with the complex structure $J$. Suppose $\chi=\sqrt{-1} \chi_{i \bar{j}} d z^{i} \wedge d \bar{z}^{j}$ is a smooth real $(1,1)$-form on $\bar{M}:=M\cup\partial M$. Given a $C^2$-function $u$ on $\bar{M}$, one can obtain a new real $(1,1)$-form $\mathfrak{g}[u]:=\chi+\sqrt{-1}\partial \overline{\partial} u.$

This paper is devoted to investigating the following Dirichlet problem
\begin{equation}\label{mainequ}
	\begin{aligned}
F(\mathfrak{g}[u])=\psi \text { in } M, \quad u=\varphi\, \text { on }\, \partial M.
\end{aligned}
\end{equation}
We assume
  the operator $F(\mathfrak{g}[u])$ takes the form
$$
F(\mathfrak{g}[u])=f(\lambda(\mathfrak{g}[u])),
$$
where $\lambda(\mathfrak{g}[u])=(\lambda_1,\cdots,\lambda_n)$ denote the eigenvalues of $\mathfrak{g}[u]$ with respect to $\omega$, and $f$ is a smooth symmetric function defined in an open symmetric convex cone $\Gamma\subset \mathbb R^n$ containing the positive cone 
\[\Gamma_n:=\left\{\lambda=(\lambda_1,\cdots,\lambda_n)\in \mathbb{R}^n: \mbox{ each } \lambda_i>0\right\}\subseteq\Gamma\]
with vertex at the origin and with the boundary $\partial \Gamma \neq \emptyset$.
 In addition, we have the following hypotheses on $f$:
\begin{equation}\label{elliptic}
f_{i}(\lambda):=\frac{\partial f}{\partial \lambda_{i}}(\lambda)>0 \,\text { in }\, \Gamma, \mbox{ } \forall 1 \leqslant i \leqslant n,
\end{equation}
\begin{equation}\label{concave}
	\begin{aligned}
	f \text { is concave in } \Gamma,	
	\end{aligned}
\end{equation}
\begin{equation}\label{addistruc}
\text { For any } \lambda \in \Gamma,\mbox{ } \lim _{t \rightarrow+\infty} f(t \lambda)>-\infty,
\end{equation}
\begin{equation}
	\label{unbounded}
	\begin{aligned}
\lim _{t \rightarrow+\infty} f\left(\lambda_{1}, \cdots, \lambda_{n-1}, \lambda_{n}+t\right)=\sup_{\Gamma} f, \mbox{ } \forall \lambda=\left(\lambda_{1}, \cdots, \lambda_{n}\right) \in \Gamma.
\end{aligned}
\end{equation}
In real variables, equations  of this type 
were  investigated
by Caffarelli-Nirenberg-Spruck \cite{CNS3},
which extended the work of Ivochkina \cite{Ivochkina1981}
on equations of Monge-Amp\`ere type.

Notice that equation~\eqref{mainequ} includes many equations as special cases. For instance, if we set $f(\lambda)=\sum_{i=1}^n\log\lambda_i$, it  
reads
  the complex Monge-Amp\`ere equation, which has played a significant role in Yau's proof of Calabi's conjecture \cite{Yau78}. In fact, over the past decades, there were many researches on complex Monge-Amp\`ere equation.
   Below we list parts of these works. 
  For $\chi = 0$ and $M=\Omega \subset \mathbb{C}^{n}$ a bounded strictly pseudoconvex domain, the Dirichlet problem for complex Monge-Ampère equation was solved by Caffarelli-Kohn-Nirenberg-Spruck \cite{CKNS2} in the class of plurisubharmonic functions.
   Caffarelli-Kohn-Nirenberg-Spruck's work was extended by Guan \cite{Guan1998The} to general bounded domains,  replacing 
   strictly pseudoconvex restriction to boundary by a subsolution assumption. Under the subsolution assumption, Guan-Li \cite{Guan2010Li} solved the Dirichlet problem for complex Monge-Amp\`ere equation on compact Hermitian manifolds with boundary.
     For $\psi\geq 0$, the Dirichlet problem for complex Monge-Amp\`ere equation becomes degenerate, which is much more complicated.  In \cite{Chen}, Chen solved the Dirichlet problem for homogeneous complex Monge-Amp\`ere equation on $M=X\times A$ and proved the existence of $C^{1,\alpha}$-regularity   (weak) geodesics in the space of K\"ahler potentials \cite{Donaldson99,Mabuchi87,Semmes92},  where $A=\mathbb{S}^1\times [0,1]$ and $X$ is a closed K\"ahler manifold. 
      The Dirichlet problem for degenerate complex Monge-Amp\`ere equation was further complemented by B{\l}ocki \cite{Blocki09geodesic}, Phong-Sturm \cite{Phong-Sturm2010}  and Boucksom \cite{Boucksom2012}.
          We refer to \cite{Chen2008Tian,Donaldson2002Holomorphic}  for complement and progress on understanding how geodesics and homogeneous complex Monge-Amp\`ere equation are related to the geometry of $X$.
    There are also several results on the Dirichlet problem~\eqref{mainequ} for   other equations. For $\chi=0$ and $M=\Omega$ a bounded domain in $\mathbb{C}^n$, 
       the Dirichlet problem~\eqref{mainequ}  was studied by Li \cite{LiSY2004}. 
    For complex inverse $\sigma_k$ equations on Hermitian manifolds,
    the Dirichlet problem was studied by Guan-Sun \cite{Guan2015Sun}.

The above results mainly 
concentrate on special cases of~\eqref{mainequ} and the proof more or less relies on specific structures of equations 
  or underlying manifolds, 
which  
 seems not adaptive
to more general cases. On the other hand, motivated by 
 increasing interests from 
 complex geometry and analysis, we are interested in investigating the Dirichlet problem~\eqref{mainequ} on curved complex manifolds, 
especially with degenerate right-hand side. 
Unfortunately, except the 
 above-mentioned works regarding to complex Monge-Amp\`ere equation \cite{Chen,Boucksom2012,Guan2010Li,Phong-Sturm2010} and complex inverse $\sigma_k$ equation \cite{Guan2015Sun}, 
 few progress has been made on 
 the 
  Dirichlet problem 
   for general equations on curved Hermitian manifolds. 
    The primary problem left open is to derive the gradient estimate as described below. To this end, in this paper we 
 set up a quantitative version of boundary estimate of the form
\begin{equation}
 \label{bdy-sec-estimate-quar1}
 \begin{aligned}
\sup _{\partial M} \Delta u \leqslant C\left(1+\sup _{M}|\nabla u|^{2}\right) 
\end{aligned}
\end{equation}
 and then solve the Dirichlet problem. 

First let us introduce some basic notions. 
We say a function $w\in C^{2}(\bar{M})$ is \emph{admissible} if
\[
\lambda(\mathfrak{g}[w]) \in \Gamma \text { in } \bar{M}.
\]
For an admissible function $\underline{u} \in C^{2}(\bar{M})$, it is called an  \emph{admissible subsolution} of the Dirichlet problem~\eqref{mainequ}, if
\begin{equation}
	\label{existenceofsubsolution}
	\begin{aligned} 
		f(\lambda(\mathfrak{g}[\underline{u}])) \geqslant \psi \text { in } \bar{M}, \quad \underline{u}=\varphi \text { on } \partial M.
	\end{aligned}
\end{equation}
Moreover, $\underline{u}$ is called a 
 \textit{strictly admissible subsolution}, if
\begin{equation}
	\label{existenceofsubsolution-2}
	\begin{aligned} 
		f(\lambda(\mathfrak{g}[\underline{u}]))>\psi \text { in } \bar{M}, \quad \underline{u}=\varphi \text { on } \partial M.
	\end{aligned}
\end{equation}
Meanwhile, we say the Dirichlet problem~\eqref{mainequ} is  non-degenerate if
\begin{equation}\label{nondegenerate}
\inf _{M} \psi>\sup _{\partial \Gamma} f.
\end{equation}
We say it is degenerate if
\begin{equation}\label{degenerate-RHS}
	\begin{aligned}
		\inf _{M} \psi=\sup _{\partial \Gamma} f
	\end{aligned}
\end{equation}
and
 \begin{equation}
	\label{continuity1}
	\begin{aligned}
		f\in C^\infty(\Gamma)\cap C(\overline{\Gamma}).
	\end{aligned}
\end{equation} 
Here
\[ 
\sup _{\partial \Gamma} f=\sup _{\lambda_{0} \in \partial \Gamma} \limsup _{\lambda \rightarrow \lambda_{0}} f(\lambda), \quad 	\overline{\Gamma} =\Gamma\cup\partial\Gamma.
\]
Throughout this paper, we always assume the boundary data $\varphi$ can be extended to a $C^{2,1}$-admissible function on $\bar{M}$. (Such an assumption is necessary for the solvability). For simplicity, we still denote it by $\varphi$. 

\begin{remark}
The perspective of subsolution was imposed by 
\cite{Guan1998The,Guan1993Boundary,Hoffman1992Boundary} as a vital tool to deal with the second order boundary estimates for Dirichlet problem of Monge-Amp\`ere equation on bounded domains. The concept of subsolutions has a great advantage in applications to some geometric problem 
 that it relaxes restrictions to the shape of boundary, see e.g. \cite{Chen,Guan1993Boundary,GuanP2002The,Guan2009Zhang}.
\end{remark}

Our main results are as follows.

\begin{theorem}	\label{thm1-1}
Let $(M,J,\omega)$ be a compact Hermitian manifold with smooth boundary. Let $f$ 
satisfy the hypotheses \eqref{elliptic}-\eqref{unbounded}. Assume the 
data $\varphi \in C^{k+2, \alpha}(\partial M)$ and $\psi \in C^{k, \alpha}(\bar{M})$ with $k \geqslant 2$, $0<\alpha<1$ satisfy~\eqref{nondegenerate} and  support  a $C^{2,1}$-admissible subsolution. Then the Dirichlet problem~\eqref{mainequ} possesses a unique admissible solution $u \in C^{k+2, \alpha}(\bar{M})$.
\end{theorem}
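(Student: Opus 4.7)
\medskip

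\noindent\textbf{Overall strategy.} I would attack Theorem~\ref{thm1-1} by the classical continuity method. Consider the family
\[
F(\mathfrak{g}[u^t]) = t\psi + (1-t)F(\mathfrak{g}[\underline{u}]) \quad \text{in } M, \qquad u^t = \varphi \ \text{on }\partial M,
\]
so that $u^0 = \underline{u}$ is admissible and $t=1$ is the target problem. Openness at an admissible $C^{2,\alpha}$-solution follows from the implicit function theorem, since the linearized operator $L^t v := f_i(\lambda(\mathfrak{g}[u^t]))\, v_{i\bar j}$ is strictly elliptic by \eqref{elliptic} and concave by \eqref{concave}, hence invertible with zero Dirichlet data. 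The nontrivial task is closedness, which reduces to uniform a priori estimates in $C^{2,\alpha}(\bar M)$ for admissible solutions of the family, independent of $t\in[0,1]$. Higher regularity is then obtained by a Schauder bootstrap, and uniqueness is immediate from the comparison principle available under \eqref{elliptic}--\eqref{concave}.

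\medskip

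\noindent\textbf{The estimates, and the role of \eqref{bdy-sec-estimate-quar1}.} The $C^0$ bound is a direct comparison: $\underline{u}\le u \le h$ for any admissible supersolution $h$ with $h|_{\partial M}=\varphi$, using monotonicity of $f$ in $\Gamma$. The boundary gradient bound follows from $\underline u \le u \le h$ together with equality on $\partial M$, controlling the normal derivative, while tangential derivatives on $\partial M$ reduce to $|\nabla\varphi|$. The standard interior $C^2$ computation, using \eqref{concave} and the Cherrier trick to absorb the torsion of $\omega$, produces
\[
\sup_M|\partial\bar\partial u| \le C\bigl(1 + \sup_M|\nabla u|^{2} + \sup_{\partial M}|\partial\bar\partial u|\bigr).
\]
The decisive input is then the quantitative boundary estimate \eqref{bdy-sec-estimate-quar1}; I expect this to be the main obstacle. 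Tangential-tangential boundary second derivatives are already controlled by $|\nabla u|$ via $u-\varphi=0$ on $\partial M$. For the mixed tangential-normal and the double-normal components, one builds local barriers of the form $v = A(\underline u - u) + B\rho - K\rho^{2} + \text{quadratic tangential terms}$, where $\rho$ is the distance to $\partial M$, and applies the linearized operator $L$ following ideas of Caffarelli-Kohn-Nirenberg-Spruck, Guan, and Guan-Li. The delicate point is to keep precise track of the quadratic dependence on $|\nabla u|$ throughout every barrier inequality, so that the constant on the right of \eqref{bdy-sec-estimate-quar1} is truly linear in $1+\sup_M|\nabla u|^{2}$ rather than involving a higher power or a factor that blows up; hypothesis \eqref{unbounded}, together with non-degeneracy \eqref{nondegenerate}, is what makes the double-normal direction tractable by ensuring the relevant quantity $f_n(\lambda)$ is bounded from below on a tube around the boundary data.

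\medskip

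\noindent\textbf{From the quantitative estimate to $C^{1}$, $C^{2,\alpha}$, and conclusion.} Combining the interior Hessian bound with \eqref{bdy-sec-estimate-quar1} yields $\sup_M|\partial\bar\partial u|\le C(1+\sup_M|\nabla u|^{2})$. The global gradient estimate is then extracted from this inequality by a blow-up / contradiction argument: if $|\nabla u|$ were not bounded, rescaling at a maximum point produces an entire admissible solution on $\mathbb{C}^{n}$ (or a half-space) whose Hessian would grow slower than $|\nabla u|^{2}$, contradicting a Liouville-type statement compatible with \eqref{elliptic}--\eqref{unbounded}. With $|\nabla u|+|\partial\bar\partial u|$ now uniformly controlled, \eqref{nondegenerate} ensures uniform ellipticity of $L$, and the complex Evans-Krylov theorem applied to the concave operator $f$ gives an interior $C^{2,\alpha}$ estimate; a boundary $C^{2,\alpha}$ estimate of Krylov type, using the strict admissibility of $\underline u$ afforded by \eqref{nondegenerate}, produces a global $C^{2,\alpha}$ bound. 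The standard Schauder bootstrap for the linearized equation then promotes the regularity to $C^{k+2,\alpha}(\bar M)$ under the assumed regularity of $\varphi$ and $\psi$, closing the continuity method and completing the proof.
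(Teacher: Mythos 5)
Your proposal correctly identifies the overall skeleton — continuity method, comparison principle for $C^0$ and uniqueness, boundary gradient via super/subsolution, Sz\'ekelyhidi's interior second-order estimate, blow-up for the gradient, Evans--Krylov and Schauder bootstrap — and this matches the structure in Sections~\ref{Preliminaries} and~\ref{solvingequation} of the paper. The tangential--normal barrier $v=(\underline u-u)-t\sigma+N\sigma^2$ you describe is also essentially the paper's barrier \eqref{barrier1}.

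However, there is a genuine gap in your treatment of the double-normal boundary derivative, and this is exactly where the paper's main technical contribution lies. You lump the double-normal component together with the tangential--normal one as a barrier estimate and appeal to the heuristic that \eqref{unbounded} ``ensures $f_n(\lambda)$ is bounded from below.'' That is not the mechanism, and more importantly it would not produce the quantitative bound $\mathfrak{g}(\xi_n,J\bar\xi_n)\le C\bigl(1+\sum_\alpha|\mathfrak{g}(\xi_\alpha,J\bar\xi_n)|^2\bigr)$ in \eqref{yuan-prop1} that is needed for the blow-up argument to close. The paper treats the double-normal case by an entirely different device: a \emph{quantitative} version of the Caffarelli--Nirenberg--Spruck eigenvalue lemma (Lemma~\ref{yuan's-quantitative-lemma}), which tells you, under the explicit quadratic-growth condition \eqref{guanjian1-yuan} on the $(n,n)$-entry $\mathrm{{\bf a}}$, exactly how close each $\lambda_\alpha$ sits to $d_\alpha$ and $\lambda_n$ to $\mathrm{{\bf a}}$, with all error constants made explicit. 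This is applied to a convex decomposition $\mathfrak{g}_{\alpha\bar\beta}=(1-t_0)\underline{\mathfrak{g}}_{\alpha\bar\beta}+(A_{t_0})_{\alpha\bar\beta}$ together with the concavity criteria of Lemma~\ref{lemma3.4}, and \eqref{unbounded} enters in two places: in fixing the constant $R_1$ in \eqref{key-03-yuan3}, and in the CNS3-style barrier of Lemma~\ref{lemma-key2-yuan3} that bounds $(1-t_0)^{-1}$. The classical, non-quantitative CNS3 Lemma~1.2 (your implicit reference) gives only asymptotic $o(1)$ control and yields $\mathfrak{g}_{n\bar n}\le C$ with $C$ depending on the off-diagonal entries in an uncontrolled way; that would only give $\sup_M\Delta u\le C(\sup_M|\nabla u|)$ with unknown dependence, which is not enough to run the blow-up and Liouville step. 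Your sketch is therefore missing the single most essential new ingredient.
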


Let $\Gamma_\infty$ be the projection of $\Gamma$ to the subspace of former $n-1$ subscripts. Namely, $\left(\lambda_{1}, \cdots, \lambda_{n-1}\right)\in \Gamma_\infty$ if and only if there exists $\lambda_n>0$ such that $\left(\lambda_{1}, \cdots, \lambda_{n-1}, \lambda_{n}\right) \in \Gamma$.

\begin{theorem}
	\label{thm1-2}
In Theorem~\ref{thm1-1} the hypothesis~\eqref{unbounded} on $f$ can be dropped if
the Levi form $L_{\partial M}$ of $\partial M$ satisfies
\begin{equation}\label{bdry-assumption1}
\left(-\kappa_{1}, \cdots,-\kappa_{n-1}\right) \in \overline{\Gamma}_{\infty} \text { on } \partial M
\end{equation}
where  $\kappa_{1}, \cdots, \kappa_{n-1}$ denote the eigenvalues of Levi form $L_{\partial M}$ of $\partial M$ with respect to $\omega^{\prime}=\left.\omega\right|_{T_{\partial M} \cap J T_{\partial M}}$, and $\overline{\Gamma}_{\infty}$ is the closure of $\Gamma_{\infty}$. 
\end{theorem}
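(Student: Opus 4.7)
The plan is to track through the proof of Theorem~\ref{thm1-1} and identify exactly where hypothesis~\eqref{unbounded} is used, then replace that step by one that uses the Levi form hypothesis~\eqref{bdry-assumption1}. All the ingredients that do not use~\eqref{unbounded}---the $C^0$ estimate, the interior and boundary gradient estimates, the interior $C^2$ estimate, the tangential-tangential boundary $C^2$ estimate (which is immediate from $u=\varphi$ on $\partial M$), and the mixed tangential-normal boundary $C^2$ estimate (obtained by a standard barrier built from $\underline u$ and the defining function of $\partial M$)---go through unchanged, depending only on~\eqref{elliptic}--\eqref{addistruc} and the subsolution. Hypothesis~\eqref{unbounded} enters, as in~\cite{CNS3,Guan1998The,Guan2010Li}, only in the double normal second derivative estimate $u_{n\bar n}|_{\partial M}\le C(1+K^2)$, with $K:=1+\sup_M|\nabla u|$; so it is precisely this step that must be redone.

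At a boundary point $p\in \partial M$, fix local holomorphic coordinates $(z^1,\dots,z^n)$ such that $\omega(p)$ is standard and $T^{1,0}_p\partial M=\mathrm{span}(\partial_{z^1},\dots,\partial_{z^{n-1}})$. Using $u=\varphi$ on $\partial M$, the tangential $(n-1)\times(n-1)$ block
\[
A'(p) \;=\; \bigl(\mathfrak{g}[u]_{\alpha\bar\beta}(p)\bigr)_{1\le\alpha,\beta\le n-1}
\]
is expressible in terms of $\chi$, the tangential derivatives of $\varphi$, and the inward normal derivative $u_\nu(p)$ paired with the Levi form $L_{\partial M}(p)$; in particular $A'(p)$ is independent of the unknown $u_{n\bar n}(p)$ and is controlled up to $O(K)$ by the gradient estimate. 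The hypothesis~\eqref{bdry-assumption1} says that the eigenvalues of $-L_{\partial M}$ lie in $\overline{\Gamma}_\infty$, and, together with the strictly admissible margin supplied by $\underline u$, this forces the eigenvalues $(\mu_1(p),\dots,\mu_{n-1}(p))$ of $A'(p)$ to lie in $\Gamma_\infty$ with a uniform positive distance from $\partial\Gamma_\infty$ (after normalization by $K$). In Theorem~\ref{thm1-1} one combines this fact with~\eqref{unbounded} to conclude $\lambda_n(\mathfrak{g}[u](p))\le C(1+K^2)$ by a clean contradiction: otherwise $f(\lambda)$ would exceed $\sup_M\psi$ by~\eqref{unbounded}. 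Without~\eqref{unbounded}, the same $O(1+K^2)$ quantitative bound has to be produced directly by a cone-geometry argument that exploits the concavity~\eqref{concave}, the monotonicity~\eqref{elliptic}, and the definite margin of $A'(p)$ inside $\Gamma_\infty$ provided by~\eqref{bdry-assumption1}, for example by comparing $\mathfrak{g}[u]$ with $\mathfrak{g}[\underline u]$ along the inward normal and using the subsolution gap. Once~\eqref{bdy-sec-estimate-quar1} is in hand, the remainder of the proof of Theorem~\ref{thm1-1}---the global gradient estimate deduced from~\eqref{bdy-sec-estimate-quar1} by a maximum principle trick, the global $C^2$ bound, the Evans-Krylov $C^{2,\alpha}$ estimate, and the continuity method---carries over verbatim.

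The main obstacle is precisely this substitute for the role played by~\eqref{unbounded} in the double normal estimate. In Theorem~\ref{thm1-1} the hypothesis~\eqref{unbounded} gave a soft but powerful mechanism: largeness of a single coordinate forces $f$ to its supremum, which must then exceed $\psi$, a contradiction. Replacing this qualitative mechanism by a quantitative argument based purely on the concavity of $f$ and the Levi form condition~\eqref{bdry-assumption1}, while maintaining the sharp $O(1+K^2)$ dependence required for~\eqref{bdy-sec-estimate-quar1} to feed into the global gradient estimate, is where the real technical content of Theorem~\ref{thm1-2} lies; keeping the constants uniform in the Levi data and in $\underline u$ is the delicate accounting task.
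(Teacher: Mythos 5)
You correctly locate the single place where hypothesis~\eqref{unbounded} enters the proof of Theorem~\ref{thm1-1}: the double normal boundary estimate. You also correctly observe that the tangential-normal boundary estimate, the global second order estimate, and the blow-up gradient argument are unaffected, and you correctly anticipate that~\eqref{bdry-assumption1} must enter through the tangential block $\mathfrak{g}_{\alpha\bar\beta}|_{\partial M}$. But the proposal never actually produces the double normal estimate without~\eqref{unbounded}. Your closing paragraph concedes this, saying the substitute argument ``is where the real technical content of Theorem~\ref{thm1-2} lies,'' and the only hint offered, ``comparing $\mathfrak{g}[u]$ with $\mathfrak{g}[\underline u]$ along the inward normal and using the subsolution gap,'' is a restatement of the goal rather than a proof of it.

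Concretely, the mechanism by which~\eqref{bdry-assumption1} replaces~\eqref{unbounded} is absent. From the boundary condition one has $\mathfrak{g}_{\alpha\bar\beta}(p_0)=\underline{\mathfrak{g}}_{\alpha\bar\beta}(p_0)+\eta\,\sigma_{\alpha\bar\beta}(p_0)$ with $\eta=(u-\underline u)_{x_n}(0)\geq 0$, and since $\overline{\Gamma}_\infty$ is a cone,~\eqref{bdry-assumption1} forces $\lambda_{\omega'}(\eta\sigma_{\alpha\bar\beta})\in\overline{\Gamma}_\infty$. This allows one to take $t_0=0$ in Lemma~\ref{yuan-k2v}: hypothesis~\eqref{yuanrr-412} then holds by the Levi-form condition, hypothesis~\eqref{key-03-yuan3v} follows from the subsolution property together with~\eqref{concavity1}, and the bound~\eqref{bound-t0} on $(1-t_0)^{-1}$ is trivial. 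Hypothesis~\eqref{unbounded} was needed only in the competing branch (Proposition~\ref{proposition-quar-yuan2}), to supply the parameter $R_1$ and to drive the barrier argument of Lemma~\ref{keylemma1-yuan3} when $t_0\neq 0$; once $t_0=0$ it is not needed at all. The engine that then yields $\mathfrak{g}_{n\bar n}(p_0)\leq C\bigl(1+\sum_\alpha|\mathfrak{g}_{\alpha\bar n}(p_0)|^2\bigr)$ is the new quantitative eigenvalue Lemma~\ref{yuan's-quantitative-lemma} combined with the monotonicity consequence~\eqref{concavity2} of Lemma~\ref{lemma3.4}. None of this -- the decomposition with $\eta\geq0$, the role of $t_0=0$, the quantitative eigenvalue lemma, or~\eqref{concavity2} -- appears in your sketch, so the gap is real. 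A minor further point: the parenthetical ``uniform positive distance from $\partial\Gamma_\infty$ (after normalization by $K$)'' is misleading; the uniform margin of $\lambda_{\omega'}(\mathfrak{g}_{\alpha\bar\beta})$ inside $\Gamma_\infty$ is inherited from $\underline{\lambda}'$ at fixed scale, and dividing by $K$ would in general send it to zero.
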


To prove the above results, 
 it remains to prove gradient estimate.
  However,
it is still 
 highly mysterious to prove gradient bound directly for \eqref{mainequ} on complex manifolds, as B{\l}ocki \cite{Blocki09gradient}, 	Hanani \cite{Hanani1996}  and Guan-Li \cite{Guan2010Li}
 did for complex Monge-Amp\`ere equation, to be compared with
 the gradient estimate 
established by Li \cite{LiYY1990} and Urbas \cite{Urbas2002} for
fully nonlinear elliptic equations on Riemannian manifolds.  
 The complex version of Li's result was obtained by the author \cite{yuan2021cvpde} on K\"ahler manifolds with nonnegative orthogonal bisectional curvature; while the trick used in  
\cite{Urbas2002} 
does not work in complex variables any more. 
The trouble comes from
 the lack of understanding of pure complex derivatives $u_{ij}$, $u_{\bar i\bar j}$ when restricted to 
mixed complex derivatives 
$u_{i\bar j}$.  
Blow-up argument is an alternative approach to derive the gradient estimate. 
When $(M,J,\omega)$ is a closed Hermitian manifold,  
Sz\'ekelyhidi 
\cite{Gabor}  proved the second estimate
for equation \eqref{mainequ} 
\begin{equation}
 	\label{sec-estimate-quar1}
	\begin{aligned}
		\sup_{M}\Delta u\leq C \left(1+\sup_{M} |\nabla u|^2\right) 
	\end{aligned}
\end{equation}
and then used it to derive gradient estimate via a blow-up argument. 
Such a blow-up argument using \eqref{sec-estimate-quar1} 
 appeared in literature that has been done by
Chen \cite{Chen}, complemented by \cite{Boucksom2012,Phong-Sturm2010},
for Dirichlet problem of complex 
Monge-Amp\`ere equation, 
 and by Dinew-Ko{\l}odziej \cite{Dinew2017Kolo} for complex $k$-Hessian equations on
closed K\"ahler manifolds using Hou-Ma-Wu's 
  second estimate \cite[Theorem 1.1]{HouMaWu2010}.
 We also refer respectively to \cite{STW17,TW17,TW19} and \cite{ZhangDk} for related works 
devoting to 
Gauduchon's conjecture and   complex $k$-Hessian equations on closed Hermitian manifolds.

To show 
Theorems \ref{thm1-1} and \ref{thm1-2}, 
 a specific problem that we have in mind is to establish 
the boundary estimate \eqref{bdy-sec-estimate-quar1}. Unfortunately, the proof in  \cite{Chen,Blocki09geodesic,Phong-Sturm2010}   relies heavily upon the specific structure of Monge-Amp\`ere operator, 
 which cannot be adapted to general equations.
In 
this paper we set up
 Lemmas 
 \ref{lemma3.4} and \ref{yuan's-quantitative-lemma} 
 in an attempt to bound 
 the double normal derivative of solutions on the boundary  in a quantitative form. Subsequently, we achieve the goal as follows.
\begin{theorem}
\label{thm1-bdy}

Let $(M,J,\omega)$ and $f$ be as in Theorem~\ref{thm1-1}. Assume that $\varphi \in C^{3}(\partial M)$ and $\psi \in C^{1}(\bar{M})$ satisfy~\eqref{nondegenerate} and support a $C^{2}$-admissible subsolution $\underline{u}$. Then any admissible solution $u \in C^{3}(M) \cap C^{2}(\bar{M})$ to~\eqref{mainequ} satisfies
\[
\sup _{\partial M} \Delta u \leqslant C\left(1+\sup _{M}|\nabla u|^{2}\right),
\]
where $C$ is a uniform positive constant depending on $|\varphi|_{C^{3}(\bar{M})}$, $|\nabla u|_{C^{0}(\partial M)}$, $|\underline{u}|_{C^{2}(\bar{M})}$, $|\psi|_{C^{1}(M)}$, $\partial M$ up to third derivatives, and other known data under control (but not on $\left.\sup _{M}|\nabla u|\right)$.

Moreover, the hypothesis~\eqref{unbounded} can be dropped and the constant $C$ is independent of $(\delta_{\psi, f})^{-1}$, provided $\partial M$ satisfies~\eqref{bdry-assumption1}, where
\[
\delta_{\psi,f}=\inf_{M}\psi-\sup_{\partial \Gamma} f.
\]
\end{theorem}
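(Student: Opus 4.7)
The plan is to fix an arbitrary boundary point $x_{0}\in\partial M$ and establish a pointwise bound $\Delta u(x_{0})\le C(1+|\nabla u(x_{0})|^{2})$ with $C$ depending only on the listed data, then take the supremum. Choose holomorphic coordinates $(z_{1},\dots,z_{n})$ centred at $x_{0}$ such that $\omega(x_{0})=\sqrt{-1}\sum_{i}dz^{i}\wedge d\bar{z}^{i}$, and a defining function $r$ of $\partial M$ with $r<0$ in $M$ satisfying $r_{\alpha}(x_{0})=0$ for $\alpha<n$ and $r_{n}(x_{0})\neq 0$. Split the complex Hessian of $u$ at $x_{0}$ into the tangential block $\{u_{\alpha\bar{\beta}}\}_{\alpha,\beta<n}$, the mixed block $\{u_{\alpha\bar{n}}\}_{\alpha<n}$, and the double normal $u_{n\bar{n}}$, and treat each separately using the linearised operator $\mathcal{L}:=\sum F^{i\bar{j}}\partial_{i}\partial_{\bar{j}}$ evaluated at $\mathfrak{g}[u]$.

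For the tangential block, differentiating $u=\varphi$ on $\partial M$ twice along tangential complex vector fields shows that $u_{\alpha\bar{\beta}}(x_{0})$ differs from $\varphi_{\alpha\bar{\beta}}(x_{0})$ only by terms linear in first derivatives of $u-\varphi$, so $|u_{\alpha\bar{\beta}}(x_{0})|\leq C(1+|\nabla u(x_{0})|)$ for $\alpha,\beta<n$. For the mixed block, introduce the tangential operators $T_{\alpha}=\partial_{\alpha}-(r_{\alpha}/r_{n})\partial_{n}$, which annihilate $u-\varphi$ on $\partial M$. Concavity of $f$ together with the subsolution inequality yields $\mathcal{L}(\underline{u}-u)\geq 0$; the non-degeneracy hypothesis \eqref{nondegenerate} then allows a standard modification of $\underline{u}$ to produce the quantitative version $\mathcal{L}(\underline{u}-u)\geq\varepsilon\bigl(1+\sum_{i}F^{i\bar{i}}\bigr)$ near $\partial M$. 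On a small half-ball $B_{\delta}(x_{0})\cap\bar{M}$ I would consider the barrier
\[
\Phi := A(\underline{u}-u) + B\sum_{\alpha<n}|z_{\alpha}|^{2} - \gamma r,
\]
with constants chosen hierarchically $1\ll\gamma\ll B\ll A$, so that $\mathcal{L}(\Phi\pm T_{\alpha}(u-\varphi))\geq 0$ in the interior and $\Phi\geq|T_{\alpha}(u-\varphi)|$ on $\partial(B_{\delta}(x_{0})\cap\bar{M})$. Since $\Phi(x_{0})=0=T_{\alpha}(u-\varphi)(x_{0})$, the Hopf lemma yields $|u_{\alpha\bar{n}}(x_{0})|\leq C(1+|\nabla u(x_{0})|)$.

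The main obstacle, and the essential novelty of the paper, is bounding the double normal entry $u_{n\bar{n}}(x_{0})$, which the preceding two steps do not touch. Here I would invoke the quantitative algebraic lemmas \ref{lemma3.4} and \ref{yuan's-quantitative-lemma}. Their role is precisely to control the largest eigenvalue of a Hermitian matrix lying in $\Gamma$ whose $f$-value equals $\psi$, in terms of quantitative information on the orthogonal $(n-1)\times(n-1)$ block. From the preceding tangential and mixed bounds, a Schur-type reduction (completing the square in the mixed entries) shows that the effective $(n-1)\times(n-1)$ tangential block of $\mathfrak{g}[u](x_{0})$ has norm $\leq C(1+|\nabla u(x_{0})|^{2})$ and eigenvalues in $\overline{\Gamma}_{\infty}$. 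Applying lemma \ref{yuan's-quantitative-lemma} with the gap $\delta_{\psi,f}>0$ supplied by \eqref{nondegenerate} and the unboundedness hypothesis \eqref{unbounded} then forces $\mathfrak{g}[u]_{n\bar{n}}(x_{0})\leq C(1+|\nabla u(x_{0})|^{2})$, and hence the same bound on $u_{n\bar{n}}(x_{0})$.

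For the second assertion, \eqref{unbounded} is dropped and lemma \ref{lemma3.4} replaces lemma \ref{yuan's-quantitative-lemma}. The Levi form condition $(-\kappa_{1},\dots,-\kappa_{n-1})\in\overline{\Gamma}_{\infty}$ supplies exactly the admissibility needed for lemma \ref{lemma3.4} to apply to the tangential block, and the resulting constant depends on $\psi$ only through $|\psi|_{C^{1}(M)}$, being therefore independent of $(\delta_{\psi,f})^{-1}$. Summing the three block estimates finishes the proof. The crux throughout is this last step: lacking the Monge--Amp\`ere structure exploited in \cite{Chen,Blocki09geodesic,Phong-Sturm2010}, the quadratic-in-$|\nabla u|$ dependence must be extracted purely from the concavity, ellipticity, and boundary admissibility structure of $f$, which is exactly what the two quantitative lemmas encapsulate.
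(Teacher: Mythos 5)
Your overall decomposition into tangential, mixed, and double-normal blocks matches the paper, as does the idea of feeding a quantitative eigenvalue lemma into the double-normal estimate. But there are gaps that would prevent the argument from closing.

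\textbf{The mixed-block barrier is too weak.} Your ansatz $\Phi = A(\underline{u}-u) + B\sum_{\alpha<n}|z_\alpha|^2 - \gamma r$ will not yield $\mathcal{L}(\Phi\pm T_\alpha(u-\varphi))\ge 0$. When one applies $\mathcal{L}$ to $T_\alpha(u-\varphi)$ and commutes derivatives, two kinds of terms appear that your barrier has no mechanism to absorb: (i) a term of the form $-\,F^{i\bar j}u_{y_n i}u_{y_n\bar j}$ (quadratic in second derivatives involving the normal direction), coming from differentiating the factor $r_\alpha/r_n$; and (ii) terms of size $C\sum_i f_i|\lambda_i|$, which can be of the same order as $\lambda_n$ itself. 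The paper handles (i) by including $\frac{\gamma}{\sqrt{b_1}}(u_{y_n}-\varphi_{y_n})^2$ in $\Phi$ (its $\mathcal{L}$ produces exactly the compensating $+\,F^{i\bar j}u_{y_n i}u_{y_n\bar j}$), and handles (ii) by adding $\frac{1}{\sqrt{b_1}}\sum_{\tau<n}|\widetilde{u}_\tau|^2$ to the main barrier, whose linearization yields $\frac{1}{2}\sum_{i\ne r}f_i\lambda_i^2$ by Lemma~\ref{lemma5.4-yuan}; the $\sum f_i|\lambda_i|$ terms are then absorbed by Cauchy--Schwarz against this quadratic gain. Without both corrections the proof of the mixed-normal estimate does not close. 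The Guan-type dichotomy (Lemma~\ref{guan2014}) replaces the "standard modification of $\underline{u}$" you invoke: one does not get $\mathcal{L}(\underline{u}-u)\ge\varepsilon(1+\sum F^{i\bar i})$ unconditionally; it holds only when the normal directions are separated, and in the complementary case one must instead exploit the term $2A_1 N\sqrt{b_1}\,F^{i\bar j}\sigma_i\sigma_{\bar j}$, which requires the $-t\sigma + N\sigma^2$ modification of the barrier and the inequality \eqref{2nd-case1}. This two-case structure is why the final constant is independent of $(\delta_{\psi,f})^{-1}$ — no "standard modification" of the subsolution is used.

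\textbf{The double-normal step and the role of the two lemmas.} There is no Schur reduction in the paper, and your picture of the quadratic dependence is off. Lemma~\ref{yuan's-quantitative-lemma} takes the raw Hermitian block $\bigl(\mathfrak{g}_{i\bar j}\bigr)$ at the boundary point — with the tangential block bounded linearly in $|\nabla u|$ and the mixed entries $a_\alpha=\mathfrak{g}_{\alpha\bar n}$ — and says that once $\mathfrak{g}_{n\bar n}$ exceeds a threshold that is \emph{quadratic} in $\sum|a_\alpha|^2$, the eigenvalues cluster near the diagonal. That quadratic threshold is the source of the $|\nabla u|^2$ on the right. Feeding in the equation $f(\lambda)=\psi$ via Lemma~\ref{lemma3.4} (through \eqref{concavity2}) then caps $\mathfrak{g}_{n\bar n}$ by that threshold. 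Your final assertion that ``lemma~\ref{lemma3.4} replaces lemma~\ref{yuan's-quantitative-lemma}'' under \eqref{bdry-assumption1} is incorrect: both lemmas are used in \emph{both} cases. What \eqref{bdry-assumption1} buys is that the deformation parameter $t_0$ in Lemma~\ref{yuan-k2v} can be taken to be zero, so one never needs the unbounded condition \eqref{unbounded} to produce the constant $R_1$, and one never needs to bound $(1-t_0)^{-1}$ via the CNS-type barrier of Lemma~\ref{keylemma1-yuan3} — that barrier is the only place $(\delta_{\psi,f})^{-1}$ enters. You also assert that the tangential block automatically has eigenvalues in $\overline{\Gamma}_\infty$; in the absence of \eqref{bdry-assumption1} this fails, and the paper's deformation $A_t$ in \eqref{A_t} exists precisely to overcome that failure.
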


\begin{remark}
	In the proof of estimates, it only requires the subsolution $\underline{u}\in C^2(\bar M)$.
\end{remark}

Note that the above estimate is fairly delicate. It does not depend on $(\delta_{\psi, f})^{-1}$ under assumption~\eqref{bdry-assumption1}. 
As a result,
together with Lemma \ref{lemma1-con-addi},
  we can solve the Dirichlet problem for 
degenerate equations.  

\begin{theorem} \label{thm2-diri-de}
Let $(M,J,\omega)$ be a compact Hermitian manifold with smooth boundary subject to~\eqref{bdry-assumption1}, and let $f$ satisfy~\eqref{elliptic},~\eqref{concave}  and~\eqref{continuity1}.
Assume  $\varphi \in C^{2,1}(\partial M)$ and $\psi \in C^{1,1}(\bar{M})$ satisfy~\eqref{degenerate-RHS} and support a strictly admissible subsolution $\underline{u}\in C^{2,1}(\bar{M})$. Then there exists a (weak) solution  $u \in C^{1, \alpha}(\bar{M})$ to the Dirichlet problem~\eqref{mainequ} with $\forall 0<\alpha<1$ such that
\[
\lambda(\mathfrak{g}[u]) \in \overline{\Gamma} \mbox{ in }  \bar{M}, \quad    \Delta u \in L^{\infty}(\bar{M}).
\]
\end{theorem}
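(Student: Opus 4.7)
The plan is to produce $u$ by approximation: solve a family of non-degenerate Dirichlet problems with slightly lifted right-hand side using Theorem~\ref{thm1-2}, establish uniform estimates via Theorem~\ref{thm1-bdy}, and pass to the limit in $C^{1,\alpha}(\bar M)$. Since $\underline u$ is a \emph{strictly} admissible subsolution, compactness of $\bar M$ yields $\delta_0>0$ with $f(\lambda(\mathfrak{g}[\underline u]))\geq \psi+\delta_0$ on $\bar M$. For each $\varepsilon\in(0,\delta_0/2)$ I would choose $\psi_\varepsilon\in C^\infty(\bar M)$ and $\varphi_\varepsilon\in C^\infty(\partial M)$ (by mollification plus a small uniform lift) so that $\psi\leq \psi_\varepsilon\leq \psi+\varepsilon$, $\varphi_\varepsilon\to\varphi$ in $C^{2,1}$, $\psi_\varepsilon\to\psi$ in $C^{1,1}$, and $\inf_M\psi_\varepsilon>\sup_{\partial\Gamma} f$. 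After a small admissible correction of $\underline u$ using a strictly plurisubharmonic defining function of $\partial M$, one obtains an admissible $\underline u_\varepsilon\in C^{2,1}(\bar M)$ with $\underline u_\varepsilon=\varphi_\varepsilon$ on $\partial M$ and $f(\lambda(\mathfrak{g}[\underline u_\varepsilon]))\geq \psi_\varepsilon$, so the perturbed problem is non-degenerate and admits a subsolution.

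Under the Levi form hypothesis~\eqref{bdry-assumption1}, Theorem~\ref{thm1-2} applies (with~\eqref{unbounded} dropped) and delivers an admissible classical solution $u_\varepsilon$ to $F(\mathfrak{g}[u_\varepsilon])=\psi_\varepsilon$ in $M$, $u_\varepsilon=\varphi_\varepsilon$ on $\partial M$. A comparison with $\underline u_\varepsilon$ and a standard upper barrier gives a uniform $C^0(\bar M)$ bound. The decisive point is that the boundary estimate in Theorem~\ref{thm1-bdy} is independent of $(\delta_{\psi_\varepsilon,f})^{-1}$, producing
\[
\sup_{\partial M}\Delta u_\varepsilon\leq C(1+\sup_M|\nabla u_\varepsilon|^2)
\]
with $C$ independent of $\varepsilon$. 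Combining this with Sz\'ekelyhidi's interior second order estimate~\eqref{sec-estimate-quar1} and the blow-up gradient argument used in \cite{Chen,Dinew2017Kolo,Gabor}, one concludes
\[
\|u_\varepsilon\|_{C^1(\bar M)}+\|\Delta u_\varepsilon\|_{L^\infty(\bar M)}\leq C
\]
uniformly in $\varepsilon$.

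The uniform bound on $\Delta u_\varepsilon$ together with standard $L^p$ theory shows $\{u_\varepsilon\}$ is bounded in $W^{2,p}(\bar M)$ for every $p<\infty$, hence in $C^{1,\alpha}(\bar M)$ for every $\alpha\in(0,1)$ by Sobolev embedding. Passing to a subsequence, $u_\varepsilon\to u$ strongly in $C^{1,\alpha}(\bar M)$ and weakly-$\star$ in $W^{2,\infty}(\bar M)$, with $u=\varphi$ on $\partial M$ and $\Delta u\in L^\infty(\bar M)$. Almost-everywhere convergence of the Hermitian forms $\mathfrak{g}[u_\varepsilon]\to\mathfrak{g}[u]$, continuity of $f$ on $\overline{\Gamma}$ supplied by~\eqref{continuity1}, and $\psi_\varepsilon\to\psi$ uniformly yield $\lambda(\mathfrak{g}[u])\in\overline{\Gamma}$ almost everywhere and $F(\mathfrak{g}[u])=\psi$ almost everywhere, completing the construction.

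The most delicate step is the uniform gradient estimate for the approximating solutions: the blow-up argument requires ruling out non-constant bounded admissible limits $v$ on $\mathbb{C}^n$ (or a half-space), which hinges on the Liouville-type consequence of~\eqref{addistruc} combined with $\lambda(\mathfrak{g}[v])\in\overline{\Gamma}$. A secondary technical difficulty is producing the perturbed subsolution $\underline u_\varepsilon$ with prescribed boundary data $\varphi_\varepsilon$; this is handled by adding a controlled admissible correction quadratic in a defining function of $\partial M$, chosen small enough not to destroy the strict subsolution inequality.
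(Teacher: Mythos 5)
Your route matches the paper's approximation strategy (regularize, solve via Theorem~\ref{thm1-2}, use the $\varepsilon$-independence of Theorem~\ref{thm1-bdy}, blow-up, pass to the limit), but there is one genuine gap. The hypotheses of Theorem~\ref{thm2-diri-de} are only~\eqref{elliptic},~\eqref{concave} and~\eqref{continuity1}; condition~\eqref{addistruc} is \emph{not} assumed. Yet~\eqref{addistruc} is a standing hypothesis of both Theorem~\ref{thm1-2} (which you invoke to solve each regularized problem) and Theorem~\ref{thm1-bdy} (the $\varepsilon$-independent boundary estimate), and, as you note yourself, it is what drives the Liouville-type theorem in the blow-up step. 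Your proposal invokes~\eqref{addistruc} without deriving it from the stated hypotheses, so the chain of appeals is not justified as written. The paper bridges this precisely with Lemma~\ref{lemma1-con-addi}: by~\eqref{continuity1} one has $\lim_{t\to 0^+}f(t\vec{\bf 1})=f(\vec{\bf 0})>-\infty$, and combining this with Lemma~\ref{k-buchong1} (every nonempty level set $\partial\Gamma^\sigma$ meets the diagonal ray $\{t\vec{\bf 1}\}$) and the equivalences of Lemma~\ref{lemma3.4} gives~\eqref{addistruc}. Adding this short reduction closes the gap.

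A smaller technical slip: you propose to correct $\underline u$ using a strictly plurisubharmonic defining function of $\partial M$, but under~\eqref{bdry-assumption1} the boundary need not be pseudoconvex (for $\Gamma=\Gamma_n$ the condition forces $\kappa_\alpha\le 0$), so such a function need not exist. It is cleaner to mollify $\underline u$ itself, set $\varphi_\varepsilon$ to be its boundary trace, and use the \emph{strictness} of the subsolution inequality to absorb the resulting small $C^2$ error and the lift of $\psi$; no convexity of $\partial M$ is then needed.
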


\begin{remark}
	To some sense, this may be
	the first breakthrough for Dirichlet problem for general
	 degenerate fully nonlinear elliptic equations of the type  \eqref{mainequ} on complex manifolds. 
\end{remark}

\begin{remark}
	When $\Gamma=\Gamma_n$, 
	Theorem \ref{thm1-1} gives back a result of the  author~\cite[Theorem 1]{yuan2021cvpde} with a different method.
	The case   $(f,\Gamma)=(\sigma_k^{1/k},\Gamma_k)$ was also proved by Collins-Picard  \cite{Collins2019Picard} independently; while the right-hand side of equation considered there does not include degenerate case, to be compared with Theorem \ref{thm2-diri-de}.
\end{remark}




It is a remarkable fact that the Dirichlet problem is not always solvable without the subsolution assumption. 
A natural problem is to construct the subsolutions.
 Unfortunately, except on 
 certain domains in Euclidean spaces \cite{CKNS2,CNS3,LiSY2004}, few progress has been made on general manifolds. 

We confirm the subsolution assumption 
when the manifold is a product.
Without specific clarification, $(X, J_X, \omega_X)$ is a closed 
Hermitian manifold of complex dimension $n-1$, and
  $(S, J_S, \omega_S)$ is a compact Riemann surface with boundary $\partial S$. Let $\pi_1: X\times S\rightarrow X$
and
$\pi_2: X\times S\rightarrow S$
denote the natural projections.
On the product $(M,J,\omega)=(X\times S,J,\omega)$,
  we are able to construct  strictly  {admissible} subsolutions  
for the Dirichlet problem, provided 
\begin{equation}
	\label{cone-condition1}
	\begin{aligned}
		\lim_{t\rightarrow +\infty} f\left(\lambda(\mathfrak{g}[\varphi]+ t\pi_2^*\omega_S)\right)>\psi  
		 \mbox{ }  \mbox{ in } \bar M.
	\end{aligned}
\end{equation}
Here  $J$ is the induced complex structure and  $\omega$ is a Hermitian metric compatible with $J$ (but not necessary the standard product metric $\omega=\pi_1^*\omega_X+\pi_2^*\omega_S$).
It is noteworthy that  
\eqref{cone-condition1} automatically holds when $f$ satisfies \eqref{unbounded}. 

 
As consequences, we deduce the following results.
\begin{theorem}
	\label{mainthm-10-degenerate-2}  
	Let $(M, J,\omega)=(X\times S,J, \omega)$ be as above with $\partial S\in C^{\infty}$.  
	Suppose  in addition that 	
	 \eqref{elliptic}-\eqref{addistruc} hold.
	Then for
		$\varphi\in C^{\infty}(\partial M)$, 
		$\psi \in C^{\infty}(\bar M)$ satisfying \eqref{nondegenerate} and \eqref{cone-condition1},
		the Dirichlet problem \eqref{mainequ} 
		admits a unique smooth admissible solution. 
		
\end{theorem}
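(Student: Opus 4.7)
The plan is to invoke Theorem~\ref{thm1-2} directly, reducing the statement to two verifications on the product $M=X\times S$: the Levi form condition~\eqref{bdry-assumption1} on $\partial M=X\times\partial S$, and the existence of a $C^\infty$ strictly admissible subsolution agreeing with $\varphi$ on $\partial M$. The remaining hypotheses of Theorem~\ref{thm1-2}, namely~\eqref{elliptic}, \eqref{concave}, \eqref{addistruc}, \eqref{nondegenerate}, and the regularity of $\varphi,\psi$, are immediate from the hypotheses of the theorem.

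First I would verify~\eqref{bdry-assumption1}. Since $\partial S$ is a smooth real curve in the complex $1$-fold $S$, its real tangent line is never $J_S$-invariant, and a direct check at any $(x,p)\in X\times\partial S$ gives $T_{(x,p)}\partial M\cap JT_{(x,p)}\partial M = T_xX$. Taking a defining function of $\partial M$ of the product form $r=\rho\circ\pi_2$, where $\rho$ is a defining function for $\partial S$ in $S$, we obtain $\sqrt{-1}\partial\bar\partial r = \pi_2^{*}\sqrt{-1}\partial\bar\partial\rho$, which annihilates every $T^{1,0}X$-vector. Hence the Levi form of $\partial M$ vanishes on its CR directions, so all eigenvalues $\kappa_1,\dots,\kappa_{n-1}$ are identically zero, independently of the choice of compatible Hermitian metric $\omega$; and $(-\kappa_1,\dots,-\kappa_{n-1})\equiv 0\in\overline{\Gamma}_\infty$ because $\Gamma_\infty$ is a cone containing the positive cone $\Gamma_{n-1}$.

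Next I would build the subsolution by solving the linear Dirichlet problem
\begin{equation*}
\sqrt{-1}\partial\bar\partial h = \omega_S \text{ in } S,\qquad h=0 \text{ on } \partial S,
\end{equation*}
on the Riemann surface $S$; this is essentially $\Delta_{\omega_S}h = 2$, for which Schauder theory supplies a unique solution $h\in C^\infty(\bar S)$. The maximum principle then forces $h\le 0$ on $\bar S$, as $h$ is strictly subharmonic with zero boundary values. Set $\underline{u}_t:=\varphi+t\,\pi_2^{*}h$ for $t>0$; on $\partial M$ we have $\underline{u}_t=\varphi$, while
\begin{equation*}
\mathfrak{g}[\underline{u}_t] = \mathfrak{g}[\varphi]+t\,\pi_2^{*}\omega_S.
\end{equation*}
By~\eqref{cone-condition1}, the monotonicity of $f$ in each eigenvalue (ellipticity~\eqref{elliptic}), and compactness of $\bar M$, choosing $t$ sufficiently large makes $\lambda(\mathfrak{g}[\underline{u}_t])\in\Gamma$ and $f(\lambda(\mathfrak{g}[\underline{u}_t]))>\psi$ uniformly on $\bar M$, yielding the desired $C^\infty$ strictly admissible subsolution.

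With these two ingredients, Theorem~\ref{thm1-2} produces an admissible solution $u\in C^{k+2,\alpha}(\bar M)$ for every $k\ge 2$; bootstrapping the Schauder estimates upgrades $u$ to $C^\infty(\bar M)$, and uniqueness follows from the concavity-based maximum principle already used in Theorem~\ref{thm1-1}. The conceptually hardest step is the Levi form calculation: one must exploit that the product complex structure --- not the metric --- forces the CR tangent of $\partial M$ to coincide with $T^{1,0}X$, on which the Hessian of a function pulled back from $S$ vanishes automatically. Once this is in hand, the subsolution construction collapses to the one-dimensional Poisson equation on $S$, and \eqref{cone-condition1} transparently delivers strict admissibility for large $t$.
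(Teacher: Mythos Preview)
Your proposal is correct and matches the paper's approach. The paper constructs the subsolution exactly as you do---solving $\Delta_S h=1$ on $S$ with zero boundary data and setting $\underline{u}=\varphi+t\,\pi_2^*h$ so that $\mathfrak{g}[\underline{u}]=\mathfrak{g}[\varphi]+t\,\pi_2^*\omega_S$, then invoking~\eqref{cone-condition1}---and the application of Theorem~\ref{thm1-2} via the Levi-flatness of $\partial(X\times S)$ is precisely the intended route (the paper records that $X\times S$ has holomorphically flat boundary, hence $\kappa_\alpha\equiv 0$, though it leaves the verification of~\eqref{bdry-assumption1} implicit). Your write-up is in fact more explicit than the paper on this point.
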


\begin{theorem}
	\label{mainthm-10-degenerate}  
	Let $(M, J,\omega)=(X\times S,J, \omega)$ be as above 
	with $\partial S\in C^{2,1}$, and let
 $f$ satisfy \eqref{elliptic}, \eqref{concave} and \eqref{continuity1}.  
	Given $\varphi\in C^{2,1}(\partial M)$ and $\psi\in C^{1,1}(\bar M)$ satisfying \eqref{degenerate-RHS} and \eqref{cone-condition1}, the Dirichlet problem \eqref{mainequ} 
		has a weak solution $u$ with   
		\begin{equation}
			\begin{aligned}
				u\in C^{1,\alpha}(\bar M), 
				\mbox{  } \forall 0<\alpha<1, \mbox{ } \Delta u \in L^{\infty}(\bar M),
				\mbox{  }
				\lambda(\mathfrak{g}[u])\in \bar \Gamma \mbox{ in } \bar M. \nonumber
			\end{aligned}
		\end{equation}

\end{theorem}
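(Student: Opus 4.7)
The plan is to reduce Theorem~\ref{mainthm-10-degenerate} to Theorem~\ref{thm2-diri-de} by verifying, on the product $M=X\times S$, its two key hypotheses: the Levi-form condition \eqref{bdry-assumption1} on $\partial M$, and the existence of a strictly admissible $C^{2,1}$ subsolution. For \eqref{bdry-assumption1}, fix any $(x,p)\in X\times\partial S=\partial M$. Since $S$ has complex dimension one, the real tangent line to $\partial S$ at $p$ is not $J$-invariant, so the holomorphic tangent space $T_{\partial M}\cap JT_{\partial M}$ coincides with $T_x X$. Take a defining function of $\partial M$ of the form $\rho=\pi_2^{*}r$, where $r$ defines $\partial S$ inside $S$; then $\sqrt{-1}\partial\bar\partial\rho=\pi_2^{*}(\sqrt{-1}\partial\bar\partial r)$ vanishes identically when restricted to $T_xX$. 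Thus all Levi-form eigenvalues $\kappa_1,\dots,\kappa_{n-1}$ equal zero, and since $\Gamma_n\subset\Gamma$ gives $(0,\dots,0)\in\overline{\Gamma}_\infty$, the condition \eqref{bdry-assumption1} holds.

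To construct the subsolution, I exploit that on a Riemann surface the form $\omega_S$ admits a global potential vanishing on the boundary: solve the Dirichlet problem $\sqrt{-1}\partial\bar\partial H=\omega_S$ on $S$ with $H|_{\partial S}=0$. In any isothermal chart this reduces to a Poisson equation with smooth right-hand side, so $\partial S\in C^{2,1}$ yields $H\in C^{2,1}(\bar S)$ by standard Schauder-type regularity. Define
\[
\underline{u}=\varphi+t_0\,\pi_2^{*}H
\]
with $t_0>0$ to be chosen. Then $\underline{u}=\varphi$ on $\partial M$, and
\[
\mathfrak{g}[\underline{u}]=\mathfrak{g}[\varphi]+t_0\,\pi_2^{*}\omega_S.
\]
The pointwise cone condition \eqref{cone-condition1}, combined with compactness of $\bar M$ and continuity in $t$ of the left-hand side (or Dini's theorem applied to the monotone family), furnishes a single $t_0$ large enough that
\[
f\bigl(\lambda(\mathfrak{g}[\underline{u}])\bigr)>\psi\quad\text{on }\bar M,
\]
which is precisely \eqref{existenceofsubsolution-2}. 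Note that $\lambda(\mathfrak{g}[\underline{u}])\in\Gamma$ automatically, since $\lambda\in\partial\Gamma$ would force $f(\lambda)\le\sup_{\partial\Gamma}f=\inf_M\psi\le\psi$.

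With both hypotheses verified, Theorem~\ref{thm2-diri-de} produces the weak solution with the stated regularity. The main obstacle is that $\partial M=X\times\partial S$ is only $C^{2,1}$, whereas Theorem~\ref{thm2-diri-de} is stated for smooth boundary. I would bridge this gap by approximation: choose nested smooth subdomains $S_\epsilon\subset S$ with $\partial S_\epsilon\to\partial S$ in $C^{2,1}$, solve the Dirichlet problem on each $X\times S_\epsilon$ via Theorem~\ref{thm2-diri-de} (the subsolution constructed above restricts), and extract a subsequential limit as $\epsilon\to 0$. The passage to the limit is enabled by the boundary estimate of Theorem~\ref{thm1-bdy}, whose crucial feature under \eqref{bdry-assumption1} is independence from $(\delta_{\psi,f})^{-1}$; together with the interior second-order estimate \eqref{sec-estimate-quar1} and the blow-up gradient estimate, this yields uniform $W^{2,\infty}$ and $C^{1,\alpha}$ bounds on the approximating solutions strong enough to pass to the limit and produce $u$ with the advertised regularity.
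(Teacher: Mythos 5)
Your proposal follows essentially the same path the paper takes: verify the Levi-form condition \eqref{bdry-assumption1} using that $\partial M=X\times\partial S$ is holomorphically flat, build $\underline{u}=\varphi+t_0\pi_2^*h$ from a potential of $\omega_S$ vanishing on $\partial S$ (the paper solves $\Delta_S h=1$, $h|_{\partial S}=0$, which is the same thing up to a conventional factor), and feed this into Theorem~\ref{thm2-diri-de}, with a domain-exhaustion step $S_\epsilon\nearrow S$ to bridge the gap between ``smooth boundary'' in Theorem~\ref{thm2-diri-de} and $\partial S\in C^{2,1}$, in the spirit of the paper's Theorem~\ref{mainthm-09}. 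One small inaccuracy: the assertion that ``$\partial S\in C^{2,1}$ yields $H\in C^{2,1}(\bar S)$ by standard Schauder-type regularity'' is overstated — Schauder theory gives $C^{2,\alpha}(\bar S)$ for every $\alpha<1$ but does not reach the endpoint $\alpha=1$, and the paper itself only claims $h\in C^{2,\beta}(\bar S)$ for $C^{2,\beta}$ boundary. This does not sink the argument, because (as the paper notes in the remark after Theorem~\ref{thm1-bdy}) the estimates actually only require the subsolution in $C^{2}(\bar M)$; but you should state the subsolution regularity as $C^{2,\alpha}$, $\alpha<1$, rather than $C^{2,1}$, and observe that this suffices.
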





\begin{remark}
	The degenerate fully nonlinear elliptic equations on the product $X\times S$ have many applications in geometry.
When $S=\mathbb{S}^1\times [0,1]$ and $f=\sigma_n^{1/n}$, 
Theorem \ref{mainthm-10-degenerate} immediately yields
 Chen's 
  existence and regularity of (weak) geodesics in the space of K\"ahler potentials $\mathcal{H}_{\omega_X}$. 
 Moreover, as shown by Donaldson \cite{Donaldson99}, for any  compact Riemann surface $S$ with boundary, the homogeneous complex Monge-Amp\`ere equation on $X\times S$ is 
 closely related to
 the Wess-Zumino-Witten equation for a map
 from $S$ to $\mathcal{H}_{\omega_X}$. Our results 
  regarding to 
 the regularity of weak solutions and  the construction of subsolutions apply to 
the Wess-Zumino-Witten equation and possibly enable one to attack some related problems. 
\end{remark}


In conclusion, in this paper we first derive the quantitative boundary estimate and then  
  solve the Dirichlet problem for fully nonlinear elliptic equations, possibly with degenerate right-hand side.
No matter the equations are degenerate or not,  
the 
existence results for Dirichlet problem of general fully nonlinear elliptic equations are rarely known until this work.  
In addition, we construct subsolutions on products, which are of numerous interests. Our results   
 include the existence and regularity results  of  Chen  \cite{Chen} as a special case and   extend extensively Sz\'ekelyhidi's work \cite{Gabor} to complex manifolds with boundary. 


 \vspace{1mm}
The paper is organized as follows. In Section \ref{Preliminaries} we sketch the proof of main estimates.
 In Section \ref{sec3} we propose two lemmas which are key ingredients to understand the quantitative version of boundary estimate for double normal derivative. In Sections \ref{sec4}
and \ref{sec5} we derive the quantitative boundary estimate for double normal and tangential-normal derivatives, respectively. The quantitative boundary estimate is then derived as a consequence.
In Section \ref{solvingequation}
we complete the proof of Theorems \ref{thm1-1}, \ref{thm1-2} and \ref{thm2-diri-de}.
In Section \ref{sec6} the subsolutions are constructed when 
 $M$ is a product of a closed complex manifold 
 with a compact Riemann surface with boundary. 
In addition, we study Dirichlet problem on such products with less regularity assumptions on boundary. 
The uniqueness of weak solutions and construction of subsolutions on more general products are 
discussed in Section \ref{sec9}.


 \subsection*{Acknowledgements}
 Part of this work
 was done while the author  was visiting University of Science and Technology of China in semester  2016/2017. 
 The author wishes to express his gratitude to  Professor Xi Zhang for his encouragement and  
 to the University
 for the  hospitality. 
The author is supported by the National Natural Science Foundation of China under grant 11801587.

 \section{Sketch of the proof of main estimates}
 \label{Preliminaries}

We first summarize the notations as follows.
 \begin{itemize}
 
 	\item For $\sigma$, we denote 
 	\begin{equation}
 		\begin{aligned}
 		\label{def-levelset}
 		\Gamma^\sigma=\{\lambda\in\Gamma: f(\lambda)>\sigma\}, \quad
 		\partial\Gamma^\sigma=\{\lambda\in\Gamma: f(\lambda)=\sigma\}.
 		\end{aligned}
 	\end{equation} 
 	\item $\vec{\bf 1}=(1,\cdots,1)\in \mathbb{R}^n.$
 	
 		\item For the solution $u$ and subsolution $\underline{u}$, we denote $\mathfrak{g}=\mathfrak{g}[u]$ and 
 	$ \underline{\mathfrak{g}}={\mathfrak{g}}[\underline{u}]$, respectively. 
 	
 \end{itemize}

Condition \eqref{elliptic} ensures \eqref{mainequ} to be elliptic at admissible functions,
while  \eqref{concave} implies that the operator $F(A)=f(\lambda(A))$ is concave with respect to $A$ subject to $\lambda(A)\in\Gamma$. 
Consequently, according to Evans-Krylov theorem  \cite{Evans82,Krylov83},
adapted to complex setting (see e.g. \cite{TWWYEvansKrylov2015}), together with 
 Schauder theory, higher order estimates for admissible solutions can be derived from 
uniform bound of complex hessian 
\begin{equation}
	\label{estimate00}
	|\partial\overline{\partial} u|\leq C.
\end{equation}
The existence results then follow from standard continuity method. 

\subsection*{Outline of the proof of Theorem \ref{thm1-bdy}}
 Before stating it, we denote  
\begin{equation}
	\label{xi-alpha}
\begin{aligned}
\xi_1, \cdots, \xi_{n-1}  
\end{aligned}
\end{equation} 
an orthonormal basis of 
$T^{1,0}_{\partial M}:=T^{1,0}_{\bar M} \cap T^{\mathbb{C}}_{ \partial M}$,
 $\nu$ the unit inner normal vector along the boundary, and 
\begin{equation}
	\label{xi-n}
\begin{aligned}
 \xi_n=\frac{1}{\sqrt{2}}\left(\mathrm{{\bf \nu}}-\sqrt{-1}J\nu\right). 
\end{aligned}
\end{equation}

\subsubsection*{Double normal case}
Under the assumption  
either $\partial M$ satisfies 
\eqref{bdry-assumption1} or $f$ satisfies
 \eqref{unbounded},
we derive 
the  boundary estimate for double normal derivative
in the following quantitative form
\begin{equation}
\label{yuan-prop1}
\begin{aligned}
 \mathfrak{g}(\xi_n, J\bar \xi_n)(p_0) \leq C\left(1 +  \sum_{\alpha=1}^{n-1} |\mathfrak{g}(\xi_\alpha, J\bar \xi_n)(p_0)|^2\right),
 \mbox{  } \forall p_0\in\partial M. 
\end{aligned}
\end{equation}
 
\noindent{\bf Case 1: $\partial M$ satisfies  \eqref{bdry-assumption1}.}
  This assumption 
   is used to compare $\mathfrak{g}_{\alpha\bar\beta}$ with  
  $\underline{\mathfrak{g}}_{\alpha\bar\beta}$ when restricted to boundary. We observe that it enables us to directly apply Lemmas 
  \ref{yuan's-quantitative-lemma} and \ref{lemma3.4}
   to understand the quantitative version of boundary estimate for double normal derivative (see Proposition \ref{proposition-quar-yuan1}). 
 
 \noindent{\bf Case 2: $f$ satisfies \eqref{unbounded}.}
    Without imposing any restriction to shape of boundary, following a strategy of Caffarelli-Nirenberg-Spruck \cite{CNS3}, we construct delicate barrier functions based on a characterization of $\Gamma_\infty$ to compare $\mathfrak{g}_{\alpha\bar\beta}$ with $\underline{\mathfrak{g}}_{\alpha\bar\beta}$ on boundary.
  Subsequently, we apply Lemmas 
 \ref{lemma3.4} and \ref{yuan's-quantitative-lemma} 
 to prove  \eqref{yuan-prop1} in 
 Proposition \ref{proposition-quar-yuan2}.


 \subsubsection*{Tangential-Normal case}  
From the quantitative boundary estimate \eqref{yuan-prop1} for double normal derivative, it requires to prove quantitative boundary estimate for tangential-normal derivatives
\begin{equation}
\begin{aligned}
  |\mathfrak{g}(\xi_\alpha,J\bar\xi_n)(p_0)| \leq C\left(1+\sup_M|\nabla u|\right), \mbox{  } \forall 1\leq\alpha\leq n-1, \mbox{  } \forall p_0\in\partial M. 
\end{aligned}
\end{equation}
This is proved in Proposition \ref{mix-general}, using local barrier technique 
going back at least to \cite{Hoffman1992Boundary,Guan1993Boundary,Guan1998The}.  



 
 \subsection*{Sketch the proof of gradient estimate}
 
 The main obstruction
  to deriving \eqref{estimate00} is 
  the gradient estimate.
 Our strategy is blow-up argument.  
According to Lemma \ref{lemma3.4} below, in the presence of \eqref{elliptic} and \eqref{concave},
 condition \eqref{addistruc} 
  is in effect equivalent to 
 \begin{equation}
 	\label{addistruc-0}
 		\lim_{t\rightarrow +\infty}f(t\lambda)>f(\mu), \mbox{ } \forall \lambda, \mbox{ }\mu\in \Gamma.
 \end{equation}
 Such a condition 
 allows one to apply the blow-up argument in \cite[Section 6]{Gabor}. 
 
 As shown in Theorem \ref{thm1-bdy}, the {admissible} solutions satisfy 
  \begin{equation}
  	\begin{aligned}
  		\sup_{\partial M} \Delta u \leq C\left(1+\sup_M |\nabla u|^2\right).  \nonumber
  	\end{aligned}
  \end{equation}
On the other hand,
Sz\'ekelyhidi's second  estimate
 \cite{Gabor} 
 yields that any  {admissible} solution to \eqref{mainequ}   satisfies
 \begin{equation}
 	\label{quantitative-2nd-boundary-estimate}
 	\begin{aligned}
 		\sup_{ M} \Delta u
 		\leq C\left(1+ \sup_{M}|\nabla u|^{2} +\sup_{\partial M}|\Delta u|\right).   \nonumber   
 	\end{aligned}
 \end{equation}
Moreover, by $\Gamma\subset\Gamma_1$, one has $\Delta u> -\mathrm{tr}_\omega \chi.$
 With those at hand, we 
 obtain \eqref{sec-estimate-quar1}, i.e.
 \begin{equation}	
 	\begin{aligned}
	\sup_{M}\Delta u\leq C \left(1+\sup_{M} |\nabla u|^2\right). \nonumber
\end{aligned} \end{equation}
 Consequently,   the gradient estimate can be derived from \eqref{sec-estimate-quar1},   using the Liouville type theorem \cite[Theorem 20]{Gabor}.

 \section{Lemmas}
 \label{sec3}
 
 To prove 
 Theorem \ref{thm1-bdy}, we propose 
  Lemmas \ref{lemma3.4} and \ref{yuan's-quantitative-lemma}. 
 
 \subsection{Criteria for symmetric concave functions}
 The concavity of $f$ implies
 \begin{equation}\label{concavity1}\begin{aligned}
\sum_{i=1}^n f_i(\lambda)(\mu_i-\lambda_i)\geq f(\mu)-f(\lambda), \mbox{  }\forall\lambda, \mbox{  }\mu\in\Gamma.
\end{aligned}\end{equation}
This yields
 \begin{equation}\label{010} \begin{aligned}
 \mbox{For any } \lambda, \mbox{  } \mu\in\Gamma,  \mbox{  } \sum_{i=1}^n f_i(\lambda)\mu_i\geq  \limsup_{t\rightarrow+\infty} f(t\mu)/t. \nonumber
 \end{aligned}\end{equation}
 Inspired by this observation,
we introduce the following condition 
 \begin{equation}
\label{addistruc-1}
\begin{aligned}
\mbox{For any } \lambda\in\Gamma, \mbox{  } 
\limsup_{t\rightarrow+\infty} f(t\lambda)/t\geq0.
\end{aligned}
\end{equation}
This leads to
\begin{lemma}
\label{lemma-new-1}
For $f$ satisfying  \eqref{concave}, the condition \eqref{addistruc-1} is equivalent to each one of the following three conditions
 \begin{equation}
 \label{addistruc-2}
 \begin{aligned}
 \sum_{i=1}^n f_i(\lambda)\mu_i\geq 0,  \mbox{ } \forall\lambda, \mbox{  } \mu\in\Gamma,
   \end{aligned}
 \end{equation}
\begin{equation}
	\label{addistruc-10-2}   \begin{aligned}
		f(\lambda+\mu)\geq f(\lambda), \mbox{ }\forall \lambda, \mbox{  } \mu\in \Gamma, 
\end{aligned} \end{equation}
 \begin{equation}
 \label{addistruc-3}
 \begin{aligned}
   \sum_{i=1}^n f_i(\lambda)\lambda_i\geq0, \mbox{   } \forall \lambda\in\Gamma. 
  \end{aligned}
 \end{equation}

 If, in addition, $\sum_{i=1}^n f_i(\lambda)>0$
 then \eqref{addistruc-1} is equivalent to each one of the following:
  \begin{equation}
 \label{addistruc-4}
 \begin{aligned}
 \sum_{i=1}^n f_i(\lambda)\mu_i>0, \mbox{   } \forall \lambda, \mbox{  } \mu\in \Gamma,
  \end{aligned}
 \end{equation}
\begin{equation}\label{addistruc-10}   \begin{aligned}
		f(\lambda+\mu)>f(\lambda), \mbox{ }\forall \lambda, \mbox{  } \mu\in \Gamma. 
\end{aligned} \end{equation}
\end{lemma}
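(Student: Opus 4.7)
The plan is to establish the four-way equivalence of the base conditions $\eqref{addistruc-1} \Leftrightarrow \eqref{addistruc-2} \Leftrightarrow \eqref{addistruc-10-2} \Leftrightarrow \eqref{addistruc-3}$ by closing a short cycle of implications, and then to upgrade to the strict versions \eqref{addistruc-4} and \eqref{addistruc-10} under the additional hypothesis $\sum_i f_i(\lambda)>0$ by exploiting the decomposition $\mu = (\mu - \epsilon\vec{\bf 1}) + \epsilon\vec{\bf 1}$, which is available because $\Gamma \supset \Gamma_n$ is open and contains $\vec{\bf 1}$.

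The infinitesimal equivalence $\eqref{addistruc-2} \Leftrightarrow \eqref{addistruc-10-2}$ I would treat first, by considering the function $\phi(s) := f(\lambda + s\mu)$ on $[0,\infty)$, which is concave with $\phi'(s) = \sum_i f_i(\lambda + s\mu)\mu_i$ non-increasing. Then \eqref{addistruc-2} forces $\phi' \geq 0$, hence $\phi(1) \geq \phi(0)$, giving \eqref{addistruc-10-2}; conversely, \eqref{addistruc-10-2} gives a non-negative secant quotient $(\phi(s)-\phi(0))/s \geq 0$ that, being non-increasing, converges as $s\to 0^+$ to $\phi'(0) = \sum_i f_i(\lambda)\mu_i$, yielding \eqref{addistruc-2}. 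The implication $\eqref{addistruc-2} \Rightarrow \eqref{addistruc-3}$ is trivial (set $\mu = \lambda$), while $\eqref{addistruc-3} \Rightarrow \eqref{addistruc-1}$ follows by applying \eqref{addistruc-3} at $t\lambda$ to see that $\psi(t) := f(t\lambda)$ has $\psi'(t) \geq 0$; then $\psi$ is non-decreasing, so $f(t\lambda)/t \geq f(\lambda)/t \to 0$ for $t \geq 1$.

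The main obstacle is the remaining implication $\eqref{addistruc-1} \Rightarrow \eqref{addistruc-2}$, which must convert a hypothesis about behavior along rays through the origin into a pointwise bound on a directional derivative at a shifted base point $\lambda$. The key idea I would use is the midpoint form of concavity
\begin{equation*}
f(\lambda + s\mu) = f\!\left(\tfrac{1}{2}(2\lambda) + \tfrac{1}{2}(2s\mu)\right) \geq \tfrac{1}{2}f(2\lambda) + \tfrac{1}{2}f(2s\mu), \qquad s > 0,
\end{equation*}
which, after dividing by $s$ and using that $f(2\lambda)$ is a fixed finite number, gives $\liminf_{s\to\infty} f(\lambda + s\mu)/s \geq \tfrac{1}{2}\lim_{t\to\infty}f(t\mu)/t \geq 0$ by \eqref{addistruc-1} (the limit exists since $t \mapsto f(t\mu)$ is concave). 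Setting again $\phi(s) = f(\lambda + s\mu)$, concavity ensures that $\lim_{s\to\infty}\phi(s)/s = \lim_{s\to\infty}\phi'(s) = \inf_{s \geq 0}\phi'(s)$; combined with the previous estimate this forces $\phi'(s) \geq 0$ for every $s \geq 0$, and taking $s=0$ gives precisely $\sum_i f_i(\lambda)\mu_i \geq 0$.

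Finally, for the strict versions under $\sum_i f_i(\lambda) > 0$: since $\Gamma$ is open and $\vec{\bf 1} \in \Gamma_n \subset \Gamma$, for any $\mu \in \Gamma$ one may choose $\epsilon > 0$ small enough that $\mu_\epsilon := \mu - \epsilon\vec{\bf 1} \in \Gamma$. Then by the already-established \eqref{addistruc-2} and the strict positivity,
\begin{equation*}
\sum_i f_i(\lambda)\mu_i = \sum_i f_i(\lambda)(\mu_\epsilon)_i + \epsilon\sum_i f_i(\lambda) > 0,
\end{equation*}
which is \eqref{addistruc-4}. For \eqref{addistruc-10}, the identity $\lambda + \mu = (\lambda + \epsilon\vec{\bf 1}) + \mu_\epsilon$ and \eqref{addistruc-10-2} give $f(\lambda + \mu) \geq f(\lambda + \epsilon\vec{\bf 1})$, while the concavity estimate $f(\lambda) \leq f(\lambda + \epsilon\vec{\bf 1}) - \epsilon\sum_i f_i(\lambda + \epsilon\vec{\bf 1})$ combined with the strict positivity produces $f(\lambda + \epsilon\vec{\bf 1}) > f(\lambda)$, completing the chain.
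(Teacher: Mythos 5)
Your proof is correct, and the equivalence cycle closes, but at the only non-trivial link, $\eqref{addistruc-1}\Rightarrow\eqref{addistruc-2}$, you take a different route than the paper. The paper gets this implication directly from the gradient form of concavity \eqref{concavity1}: replacing $\mu$ by $t\mu$ there gives $t\sum_i f_i(\lambda)\mu_i - \sum_i f_i(\lambda)\lambda_i \geq f(t\mu) - f(\lambda)$, and dividing by $t$ and sending $t\to\infty$ yields $\sum_i f_i(\lambda)\mu_i \geq \limsup_{t\to\infty} f(t\mu)/t \geq 0$, which is exactly the observation recorded just before the lemma. You instead bound the asymptotic slope of $s\mapsto f(\lambda+s\mu)$ from below via midpoint concavity and then invoke the fact that a concave function's asymptotic slope equals the infimum of its derivative. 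Both work; the paper's version is a one-liner and avoids any appeal to the existence of $\lim_{s\to\infty}\phi(s)/s$. A cosmetic slip in your intermediate step: after dividing by $s$, $\tfrac12 f(2s\mu)/s = f(2s\mu)/(2s)$, so the bound should read $\liminf_{s\to\infty}f(\lambda+s\mu)/s \geq \lim_{t\to\infty}f(t\mu)/t$ without the factor $\tfrac12$ --- harmless, since all you need is non-negativity. The remaining links are essentially the same with minor reorderings: you pass $\eqref{addistruc-2}\Rightarrow\eqref{addistruc-3}$ by taking $\mu=\lambda$, while the paper goes through \eqref{addistruc-10-2} and monotonicity of $t\mapsto f(t\lambda)$; for \eqref{addistruc-10} you split $\lambda+\mu = (\lambda+\epsilon\vec{\bf 1})+(\mu-\epsilon\vec{\bf 1})$, whereas the paper applies \eqref{concavity1} at $\lambda+\mu$ together with \eqref{addistruc-4}. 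The strict upgrade via $\mu-\delta_\mu\vec{\bf 1}\in\Gamma$ is identical.
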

\begin{proof}
Obviously, \eqref{addistruc-1} implies \eqref{addistruc-2}.
From \eqref{concavity1}, we have 
\begin{equation}
	\label{hhh-1}
	\begin{aligned}
		f(\lambda+\mu)-f(\lambda)\geq \sum_{i=1}^n f_i(\lambda+\mu)\mu_i.
	\end{aligned}
\end{equation}
Thus \eqref{addistruc-2} implies \eqref{addistruc-10-2}. It follows from \eqref{addistruc-10-2} that
\begin{equation}
	\label{hhh-2}
		\begin{aligned}
	f(t\lambda)\geq f(s\lambda), \mbox{ } \forall \lambda\in\Gamma, \mbox{  } \forall t>s
		\end{aligned}
\end{equation}
which means $\frac{d}{d t}f(t\lambda)\geq 0$. Thus 
 \eqref{addistruc-10-2}  yields \eqref{addistruc-3}.
 By \eqref{addistruc-3} we have \eqref{hhh-2}. And then it gives \eqref{addistruc-1}.  

 By the openness of $\Gamma$, for $\mu\in\Gamma$ there is  $\delta_\mu>0$ such that
  $\mu-\delta_\mu \vec{\bf1}\in\Gamma$. 
  By \eqref{addistruc-2}, we have
$\sum_{i=1}^n f_i(\lambda)\mu_i\geq \delta_{\mu}\sum_{i=1}^n f_i(\lambda)$ and then \eqref{addistruc-4} if $\sum_{i=1}^n f_i(\lambda)>0$. 
By \eqref{hhh-1} and \eqref{addistruc-4}, we derive \eqref{addistruc-10}.
\end{proof}

  	

 We now give   
 criteria of symmetric concave functions satisfying \eqref{addistruc}.
\begin{lemma}
\label{lemma3.4}
In the presence of \eqref{elliptic} and \eqref{concave}, the following statements are equivalent.
\begin{enumerate}
\item[{\bf(1)}] $f$ satisfies  \eqref{addistruc}.
\item[{\bf(2)}] $f$ satisfies  \eqref{addistruc-0}.
\item[{\bf(3)}] $f$ satisfies  \eqref{addistruc-1}.
\item[{\bf(4)}] $f$ satisfies \eqref{addistruc-2}.
\item[{\bf(5)}] $f$ satisfies \eqref{addistruc-10-2}.
\item[{\bf(6)}] $f$ satisfies  \eqref{addistruc-3}.
\item[{\bf(7)}] $f$ satisfies \eqref{addistruc-4}.
\item[{\bf(8)}] $f$ satisfies \eqref{addistruc-10} 
\end{enumerate}
\end{lemma}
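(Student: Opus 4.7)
The plan is to piggyback on Lemma \ref{lemma-new-1} rather than redo its internal work. Since \eqref{elliptic} guarantees $\sum_{i=1}^n f_i(\lambda)>0$ throughout $\Gamma$, Lemma \ref{lemma-new-1} already supplies the equivalences
\[
\eqref{addistruc-1}\Longleftrightarrow\eqref{addistruc-2}\Longleftrightarrow\eqref{addistruc-10-2}\Longleftrightarrow\eqref{addistruc-3}\Longleftrightarrow\eqref{addistruc-4}\Longleftrightarrow\eqref{addistruc-10},
\]
i.e.\ items (3)--(8) are mutually equivalent. It therefore remains to weld (1) and (2) onto this chain, which I will do via the cycle (1)$\Rightarrow$(6), (5)$\Rightarrow$(1), (1)$\Rightarrow$(2), (2)$\Rightarrow$(1).

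For (1)$\Rightarrow$(6), fix $\lambda\in\Gamma$ and consider the one-variable function $g(t):=f(t\lambda)$ for $t>0$. By \eqref{concave} $g$ is concave, so its derivative
\[
g'(t)=\sum_{i=1}^n f_i(t\lambda)\lambda_i
\]
is monotone non-increasing. If $g'(t_0)<0$ for some $t_0>0$, then $g'(t)\le g'(t_0)<0$ for every $t\ge t_0$, forcing $g(t)\to-\infty$; this contradicts \eqref{addistruc}. Hence $g'(t)\ge 0$ for all $t>0$, and evaluating at $t=1$ yields \eqref{addistruc-3}. The reverse direction (5)$\Rightarrow$(1) is a one-line observation: applying \eqref{addistruc-10-2} with the pair $(\lambda,(t-1)\lambda)\in\Gamma\times\Gamma$ for $t\ge 1$ gives $f(t\lambda)\ge f(\lambda)>-\infty$, so the limit exists in $(-\infty,+\infty]$ and is $\ge f(\lambda)$.

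The main obstacle is the strict inequality in (1)$\Rightarrow$(2); the monotonicity from (5) alone only yields $\lim_{t\to+\infty}f(t\lambda)\ge f(\mu)$, so I need to exploit the strict version \eqref{addistruc-10} (already available through the equivalence with (1) established above). The trick will be to replace $\mu$ by $2\mu$. Given $\lambda,\mu\in\Gamma$, set $\mu':=2\mu\in\Gamma$ (cone property). Since $\Gamma$ is open and $\lambda-\mu'/t\to\lambda$ as $t\to+\infty$, one has $\lambda-\mu'/t\in\Gamma$, hence $t\lambda-\mu'=t(\lambda-\mu'/t)\in\Gamma$, for all $t$ sufficiently large. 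Writing $t\lambda=\mu'+(t\lambda-\mu')$ and invoking \eqref{addistruc-10} twice gives
\[
f(t\lambda)>f(\mu')=f(2\mu)=f(\mu+\mu)>f(\mu)
\]
for every such large $t$. Letting $t\to+\infty$ then produces
\[
\lim_{t\to+\infty}f(t\lambda)\ge f(t_0\lambda)>f(\mu)
\]
for any fixed large $t_0$, which is precisely \eqref{addistruc-0}. Finally, (2)$\Rightarrow$(1) is immediate because $f(\mu)>-\infty$ for any fixed $\mu\in\Gamma$. This closes the cycle and completes the equivalence of all eight conditions.
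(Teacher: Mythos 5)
Your argument is correct and takes essentially the same route as the paper's: both delegate the equivalence of (3)--(8) to Lemma \ref{lemma-new-1} (applicable because \eqref{elliptic} forces $\sum_{i}f_i>0$ throughout $\Gamma$), and both pivot on the same openness observation $t\lambda-\mu=t(\lambda-\mu/t)\in\Gamma$ for $t$ large, fed into \eqref{addistruc-10} to reach \eqref{addistruc-0}. You enter the block via (1)$\Rightarrow$(6) through a concavity argument, where the paper instead takes the cheaper step (1)$\Rightarrow$(3), which needs no concavity at all (boundedness of $f(t\lambda)$ from below already forces $\limsup_{t\to\infty} f(t\lambda)/t\ge 0$); both are fine. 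A small stylistic wobble in your (1)$\Rightarrow$(2): the closing display $\lim_{t\to\infty}f(t\lambda)\ge f(t_0\lambda)>f(\mu)$ does not actually follow from the $2\mu$ computation you set up. After showing $f(t\lambda)>f(2\mu)>f(\mu)$ for all large $t$, the natural conclusion is $\lim_{t\to\infty}f(t\lambda)\ge f(2\mu)>f(\mu)$; the $t_0$ bound is instead a separate route through the monotonicity supplied by \eqref{addistruc-10-2}, in which case the doubling of $\mu$ was never needed and the paper's own computation with $\mu$ suffices. Either device resolves the strictness subtlety that the paper's write-up leaves implicit; just commit to one rather than mixing both.
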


\begin{proof}
Obviously, \eqref{addistruc-0}$\Rightarrow$\eqref{addistruc}$\Rightarrow$\eqref{addistruc-1}$\Leftrightarrow$\eqref{addistruc-2}$\Leftrightarrow$\eqref{addistruc-4}$\Leftrightarrow$\eqref{addistruc-10}$\Leftrightarrow$\eqref{addistruc-10-2}$\Leftrightarrow$\eqref{addistruc-3}$\Rightarrow$\eqref{addistruc}. 

It requires only to prove \eqref{addistruc-10}$\Rightarrow$\eqref{addistruc-0}.
Fix $\lambda$, $\mu\in\Gamma$. Since $\Gamma$ is open, $t\lambda-\mu=t(\lambda-\mu/t)\in \Gamma$ for $t\geq t_{\lambda,\mu}$, depending only on $\lambda$ and $\mu$. 
Thus \eqref{addistruc-10}  yields
\[f(t\lambda)=f(\mu+(t\lambda-\mu))>f(\mu).\]
\end{proof}

\begin{remark}
	Together with \cite[Lemma 6.2]{CNS3} (a  special case of a result of  \cite{Marcus1956}),
Lemma \ref{lemma3.4} implies that for any $n\times n$ Hermitian matrices 
 $A=(A_{i\bar j})$, $B=(B_{i\bar j})$ 
 with $\lambda(A)$, $\lambda(B)\in\Gamma$, 
 \begin{equation}
 \label{key-01-yuan3}
    \begin{aligned}
    \frac{\partial F}{\partial A_{i\bar j}} (A)B_{i\bar j}>0. 
    \end{aligned}
    \end{equation}
Together with \eqref{addistruc-10}, we have
 \begin{equation}
 \label{concavity2}
    \begin{aligned}
    F(A+B)>F(A).
    \end{aligned}
    \end{equation}
      
 \end{remark}

 \begin{remark}
	Condition \eqref{addistruc-10} and \eqref{concavity2} play a vital role in proof of quantitative boundary estimate for pure normal derivative.
\end{remark}

 \begin{remark}
According to Lemma \ref{lemma3.4}, 
   if 
\eqref{concave} and \eqref{addistruc} hold then
\begin{equation}
\label{sumfi}
\begin{aligned}
\sum_{i=1}^n f_i(\lambda)>\frac{f(R\vec{\bf 1})-f(\lambda)}{R} \mbox{ for any } R>0.  \nonumber   
\end{aligned}
\end{equation}
In particular, there is a uniform positive constant $\kappa_\sigma$ depending on $\sigma$ such that 
\begin{equation}
\label{sumfi1}
\begin{aligned}
\sum_{i=1}^n f_i(\lambda) \geq\kappa_\sigma \mbox{ in }   \partial\Gamma^\sigma. 
\end{aligned}
\end{equation}
 
 \end{remark}

  \subsection{Quantitative lemmas}
\label{refinementofCNS}

A key ingredient for quantitative boundary
for double normal derivative is how to follow the track of the behavior of  the eigenvalues of the matrix $\left(\mathfrak{g}(\xi_i, J\bar \xi_j)\right)$
 as $\mathfrak{g}(\xi_n, J\bar \xi_n)$
tends to infinity. To this end, we prove the following lemma.

\begin{lemma}
	\label{yuan's-quantitative-lemma}
	Let $A$ be an $n\times n$ Hermitian matrix
	\begin{equation}\label{matrix3}\left(\begin{matrix}
			d_1&&  &&a_{1}\\ &d_2&& &a_2\\&&\ddots&&\vdots \\ && &  d_{n-1}& a_{n-1}\\
			\bar a_1&\bar a_2&\cdots& \bar a_{n-1}& \mathrm{{\bf a}} \nonumber
		\end{matrix}\right)\end{equation}
	with $d_1,\cdots, d_{n-1}, a_1,\cdots, a_{n-1}$ fixed, and with $\mathrm{{\bf a}}$ variable.
	Denote the eigenvalues of $A$ by $\lambda=(\lambda_1,\cdots, \lambda_n)$.
	Let $\epsilon>0$ be a fixed constant.
	Suppose that  the parameter $\mathrm{{\bf a}}$ 
	satisfies the quadratic
	growth condition  
	\begin{equation}
		\begin{aligned}
			\label{guanjian1-yuan}
			\mathrm{{\bf a}}\geq \frac{2n-3}{\epsilon}\sum_{i=1}^{n-1}|a_i|^2 +(n-1)\sum_{i=1}^{n-1} |d_i|+ \frac{(n-2)\epsilon}{2n-3}.
		\end{aligned}
	\end{equation}
	Then the eigenvalues 
	(possibly with a proper permutation)
	behave like
	\begin{equation}
		\begin{aligned}
		 d_{\alpha}-\epsilon 	\,& < 
			\lambda_{\alpha} < d_{\alpha}+\epsilon, \mbox{  } \forall 1\leq \alpha\leq n-1, \\ \nonumber
	 \mathrm{{\bf a}} 	\,& \leq \lambda_{n}
			< \mathrm{{\bf a}}+(n-1)\epsilon. \nonumber
		\end{aligned}
	\end{equation}
\end{lemma}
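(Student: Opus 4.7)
My plan is to exploit the arrow-matrix structure of $A$ via a secular equation together with Cauchy's interlacing theorem. Writing $A$ in $2 \times 2$ block form with $D = \operatorname{diag}(d_1, \ldots, d_{n-1})$, off-diagonal vector $b = (a_1, \ldots, a_{n-1})^{T}$, and $\mathbf{a}$ in the $(n,n)$ entry, any eigenvector $(x, y) \in \mathbb{C}^{n-1} \times \mathbb{C}$ with eigenvalue $\lambda$ and $y \neq 0$ satisfies $(D - \lambda I)x = -by$ together with $b^{*}x + (\mathbf{a} - \lambda)y = 0$. Eliminating $x$ gives, whenever $\lambda \neq d_i$ for every $i$, the secular equation
\[g(\lambda) := \lambda - \mathbf{a} - \sum_{i=1}^{n-1} \frac{|a_i|^{2}}{\lambda - d_i} = 0.\]
Since $g'(\lambda) = 1 + \sum_i |a_i|^{2}/(\lambda - d_i)^{2} > 0$ off the poles, $g$ is strictly increasing on each open interval determined by consecutive $d_i$'s; on each such interval $g$ runs from $-\infty$ to $+\infty$ and hence contains exactly one eigenvalue of $A$.

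Without loss of generality relabel so $d_1 \leq d_2 \leq \cdots \leq d_{n-1}$; this sorting is the ``proper permutation'' in the statement. Let $\mu_1 \leq \cdots \leq \mu_n$ denote the eigenvalues of $A$ in increasing order. Cauchy's interlacing theorem applied to $A$ with principal submatrix $D$ yields $\mu_j \leq d_j \leq \mu_{j+1}$ for $j = 1, \ldots, n-1$, which already delivers $\mu_j < d_j + \epsilon$ for such $j$. The Rayleigh quotient tested at $e_n$ gives $\mu_n \geq \mathbf{a}$. The upper bound $\mu_n \leq \mathbf{a} + (n-1)\epsilon$ will then fall out of the trace identity $\sum_j \mu_j = \sum_i d_i + \mathbf{a}$ once the lower bounds $\mu_j \geq d_j - \epsilon$ are in hand for $j \leq n - 1$.

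So the lemma reduces to proving $\mu_j > d_j - \epsilon$ for $j = 1, \ldots, n - 1$. If $j \geq 2$ and $d_j - d_{j-1} \leq \epsilon$, interlacing alone gives $\mu_j \geq d_{j-1} \geq d_j - \epsilon$. Otherwise, $\lambda_{*} := d_j - \epsilon$ lies strictly above $d_{j-1}$ (with the convention $d_0 := -\infty$ for $j = 1$), hence in the same pole-free interval of $g$ as $\mu_j$; by monotonicity there and $g(\mu_j) = 0$, it suffices to show $g(\lambda_{*}) \leq 0$. Splitting the sum at $i = j$, $i < j$, $i > j$ gives
\[g(\lambda_{*}) = (d_j - \epsilon) - \mathbf{a} + \frac{|a_j|^{2}}{\epsilon} - \sum_{i<j}\frac{|a_i|^{2}}{d_j - \epsilon - d_i} + \sum_{i>j}\frac{|a_i|^{2}}{d_i - d_j + \epsilon},\]
in which the $i < j$ sum contributes with a favorable sign and each $i > j$ term is dominated by $|a_i|^{2}/\epsilon$ (since $d_i - d_j + \epsilon \geq \epsilon$). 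Therefore
\[g(\lambda_{*}) \leq (d_j - \epsilon) - \mathbf{a} + \epsilon^{-1}\sum_i |a_i|^{2},\]
and this is $\leq 0$ provided $\mathbf{a} \geq |d_j| + \epsilon^{-1}\sum_i|a_i|^{2}$, which is comfortably implied by the quadratic-growth hypothesis \eqref{guanjian1-yuan}.

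The main technical obstacle is the handling of degenerate configurations: when some $a_i = 0$ (then $d_i$ is an exact eigenvalue and the secular analysis must be performed on the submatrix obtained by deleting that row and column), or when several $d_i$ coincide (producing higher-order poles of $g$). The cleanest fix is to establish the lemma first under the generic assumption that the $d_i$ are distinct and every $a_i \neq 0$, and then pass to the general case by a perturbation $a_i \mapsto a_i + \delta$ or $d_i \mapsto d_i + \delta_i$ with $\delta \to 0$, invoking continuity of eigenvalues. The somewhat generous constants $(2n-3)/\epsilon$ and $(n-1)$ in the assumption provide the slack needed to cover these degenerate cases and the worst-case index $j$ uniformly.
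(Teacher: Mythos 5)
Your argument is correct and takes a genuinely different route from the paper. The paper proceeds in three steps: an intermediate lemma that uses the characteristic polynomial and a trace-contradiction argument to show each $\lambda_\alpha$ ($\alpha<n$) lies within $\epsilon$ of \emph{some} $d_i$; a deformation argument tracking the counting function $\mathrm{Card}_\alpha(\mathbf{a})$ (the number of eigenvalues in each interval $(d_\alpha-\epsilon,d_\alpha+\epsilon)$) and showing it is continuous, hence constant, for $\mathbf{a}$ large; and an appeal to the asymptotic CNS Lemma 1.2 to pin down the limit value of each count. You replace all of this bookkeeping with Cauchy interlacing, which immediately fixes both the labeling and the upper bounds $\mu_j\leq d_j$, and then derive the lower bounds $\mu_j>d_j-\epsilon$ from a one-line estimate of the secular function $g$ at $d_j-\epsilon$ using monotonicity on pole-free intervals. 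Your route is shorter, avoids the continuity-of-count argument entirely, and makes transparent why the quadratic growth of $\mathbf{a}$ suffices (the only requirement is $\mathbf{a}\geq |d_j|+\epsilon^{-1}\sum|a_i|^2$, comfortably implied by the stated hypothesis); what you lose is self-containment, since you rely on interlacing, whereas the paper's argument uses only the characteristic equation, the trace identity, and the trivial bound $\lambda_n\geq\mathbf{a}$. Your handling of degenerate configurations (repeated $d_i$, vanishing $a_i$) by perturbation and continuity of eigenvalues is sound but should be spelled out a little more carefully: you should apply the generic-case argument with a slightly smaller $\epsilon'<\epsilon$ (the slack in the constants of \eqref{guanjian1-yuan} allows this) so that the non-strict inequalities obtained in the limit still yield the strict conclusion of the lemma; similarly, in the sub-case $d_j-d_{j-1}\leq\epsilon$ you write $\mu_j\geq d_{j-1}\geq d_j-\epsilon$, which is non-strict at the endpoint, and the strictness should again be recovered from the strict generic interlacing $\mu_j>d_{j-1}$ or from the $\epsilon'$ slack. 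Neither point is a genuine gap, but both deserve an explicit sentence in a polished write-up.
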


This lemma asserts that  
if
the parameter 
$\mathrm{{\bf a}}$  
satisfies the quadratic growth condition \eqref{guanjian1-yuan}
then  the eigenvalues concentrate 
near   
diagonal elements 
correspondingly. Consequently, it suggests an effective way to  follow the track of the behavior of  the eigenvalues as $|\mathrm{{\bf a}}|$ tends to infinity.  
 In fact, Lemma \ref{yuan's-quantitative-lemma} can be viewed as a quantitative version of  \cite[Lemma 1.2]{CNS3}.
\begin{lemma}
[{\cite[Lemma 1.2]{CNS3}}]
\label{lemmaCNS3}
Consider the $n\times n$ symmetric matrix
\begin{equation}
\label{matrix1}
A=\left(
\begin{matrix}
d_1&&  &&a_{1}\\
&d_2&& &a_2\\
&&\ddots&&\vdots \\
&& &  d_{n-1}& a_{n-1}\\
a_1&a_2&\cdots& a_{n-1}& \mathrm{{\bf a}}              \nonumber
\end{matrix}
\right)
\end{equation}
with $d_1,\cdots, d_{n-1}$ fixed, $| \mathrm{{\bf a}} |$ tends to infinity and
\begin{equation}
|a_i|\leq C, \mbox{ } i=1,\cdots, n. \nonumber
\end{equation}
Then the eigenvalues $\lambda_1,\cdots, \lambda_{n}$ behave like
\begin{equation}
\label{behave1}
\begin{aligned}
\,&
\lambda_{\alpha}=d_{\alpha}+o(1),\mbox{  } 1\leq \alpha \leq n-1,\\
\,&
\lambda_{n}=\mathrm{{\bf a}}\left(1+O\left(1/\mathrm{{\bf a}}\right)\right),      \nonumber
\end{aligned}
\end{equation}
where the $o(1)$ and $O(1/\mathrm{{\bf a}})$ are uniform--depending only on $d_{1},\cdots, d_{n-1}$ and $C$.
\end{lemma}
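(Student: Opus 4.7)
The plan is to combine Cauchy's interlacing theorem, a Schur-complement computation yielding the secular equation, and the trace identity; no ingredient is more exotic than classical matrix analysis.

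After simultaneously permuting the first $n-1$ rows and columns (which preserves Hermiticity and the eigenvalues), I may assume $d_1 \leq \cdots \leq d_{n-1}$. Cauchy interlacing applied to the $(n-1) \times (n-1)$ principal submatrix $\mathrm{diag}(d_1, \ldots, d_{n-1})$ yields $\lambda_\alpha \leq d_\alpha$ for $1 \leq \alpha \leq n-1$, so the upper bound $\lambda_\alpha < d_\alpha + \epsilon$ is automatic; meanwhile, the Rayleigh quotient at $e_n$ immediately gives $\lambda_n \geq e_n^{*} A e_n = \mathrm{{\bf a}}$, supplying the lower bound on the top eigenvalue.

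Next I would bound $\lambda_n$ from above via the Schur complement with respect to the $(n,n)$ block. For $\lambda > d_{n-1}$, $\lambda$ is an eigenvalue of $A$ iff
$$\lambda - \mathrm{{\bf a}} \;=\; \sum_{i=1}^{n-1} \frac{|a_i|^2}{\lambda - d_i},$$
and the hypothesis forces $\lambda_n$ to be the unique solution in $(d_{n-1}, \infty)$. Setting $y := \lambda_n - \mathrm{{\bf a}} \geq 0$, strict monotonicity of the right-hand side in $\lambda$ yields
$$y \;=\; \sum_i \frac{|a_i|^2}{\mathrm{{\bf a}} + y - d_i} \;\leq\; \frac{\sum_i |a_i|^2}{\mathrm{{\bf a}} - d_{n-1}}.$$
Combining hypothesis \eqref{guanjian1-yuan} with the crude estimate $d_{n-1} \leq \sum_i |d_i|$ delivers $\mathrm{{\bf a}} - d_{n-1} \geq \frac{2n-3}{\epsilon} \sum_i |a_i|^2$, so $y \leq \frac{\epsilon}{2n-3} < (n-1)\epsilon$ for every $n \geq 2$ (the strictness being automatic once some $|a_i| > 0$, and trivial otherwise), which gives the upper bound on $\lambda_n$.

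Finally, the trace identity $\sum_{\alpha=1}^{n} \lambda_\alpha = \sum_i d_i + \mathrm{{\bf a}}$ rearranges to
$$\sum_{\alpha=1}^{n-1}(d_\alpha - \lambda_\alpha) \;=\; \lambda_n - \mathrm{{\bf a}} \;=\; y,$$
and since every summand is nonnegative by interlacing, each individually satisfies $d_\alpha - \lambda_\alpha < \epsilon$, which furnishes the missing lower bound $\lambda_\alpha > d_\alpha - \epsilon$. The degenerate possibility that some $a_i = 0$ is handled either by continuity in the $a_i$'s or directly, since $d_i$ then becomes an exact eigenvalue of $A$ with eigenvector $e_i$ and the problem reduces to a smaller dimension. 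The only delicate point—and what I expect to be the main bookkeeping obstacle—is confirming that the constants in \eqref{guanjian1-yuan} are sharp enough for $\mathrm{{\bf a}}$ to dominate both $d_{n-1}$ (so the secular equation is meaningful in the target region) and $\frac{2n-3}{\epsilon}\sum_i |a_i|^2$ (so the Schur-complement estimate beats the threshold $\epsilon$); the extra additive term $\frac{(n-2)\epsilon}{2n-3}$ is precisely the safety buffer that makes all inequalities strict.
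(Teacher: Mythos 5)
Your proof is correct, and it actually establishes the stronger quantitative version (Lemma~\ref{yuan's-quantitative-lemma}) by an argument genuinely different from the paper's. The paper does not itself prove Lemma~\ref{lemmaCNS3} — it cites it from~\cite{CNS3} — but it does prove the quantitative refinement, via Lemmas~\ref{refinement}--\ref{refinement3}: the characteristic-polynomial identity~\eqref{characteristicpolynomial} together with the trace shows that no $\lambda_\alpha$ with $\alpha<n$ can be far from every $d_i$, and then a deformation/continuity argument in the parameter $\mathbf{a}$ (an auxiliary eigenvalue-counting function), plus a separate reduction for repeated $d_i$'s, pins down the one-to-one matching between $\lambda_\alpha$'s and $d_\alpha$'s. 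You replace this entire second half with a single application of Cauchy interlacing to the diagonal $(n-1)\times(n-1)$ principal block, which immediately gives $\lambda_\alpha \le d_\alpha$ and $d_{\alpha-1}\le\lambda_\alpha$ after sorting; repeated diagonals and vanishing off-diagonals then require no separate treatment, and the continuity argument vanishes entirely. The secular equation bounds $\lambda_n-\mathbf{a}$ from above, and the trace identity distributes that slack among the nonnegative terms $d_\alpha-\lambda_\alpha$. Your constant is also tighter: you obtain $\lambda_n-\mathbf{a}\le \epsilon/(2n-3)$ rather than the paper's $(n-1)\epsilon$. One small point worth making explicit to keep the secular-equation step airtight: before dividing out the Vandermonde factors you need $\lambda_n\neq d_i$ for all $i$, and this follows from $\lambda_n\ge\mathbf{a}>d_{n-1}\ge d_i$ under the growth hypothesis (or eventually under $|\mathbf{a}|\to\infty$); the nonstrict interlacing inequality $\lambda_n\ge d_{n-1}$ by itself would not suffice.
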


This lemma was first used by Caffarelli-Nirenberg-Spruck  \cite{CNS3}  and later by \cite{Trudinger95,LiSY2004} to derive boundary
estimates for double normal derivative of
certain fully nonlinear elliptic equations on bounded domains $\Omega$ in Euclidean spaces.
However, their boundary estimate is not a quantitative form. The reason is that  Lemma \ref{lemmaCNS3} does not figure out
how the eigenvalues of matrix $A$ concentrate explicitly near the corresponding
 diagonal elements  when $|\mathrm{{\bf a}}|$ is sufficiently large. 
%


\vspace{1mm}
In the rest of this subsection, we complete the proof of Lemma \ref{yuan's-quantitative-lemma}.
We start with the case $n=2$. 
For $n=2$, the eigenvalues of $A$ are
 $$\lambda_{1}=\frac{\mathrm{{\bf a}}+d_1- \sqrt{(\mathrm{{\bf a}}-d_1)^2+4|a_1|^2}}{2},
\quad \lambda_2=\frac{\mathrm{{\bf a}}+d_1+\sqrt{(\mathrm{{\bf a}}-d_1)^2+4|a_1|^2}}{2}.$$
We assume $a_1\neq 0$; otherwise we are done.
If $\mathrm{{\bf a}} \geq \frac{|a_1|^2}{ \epsilon}+ d_1$ then one has
\begin{equation}
\begin{aligned}
0\leq d_1- \lambda_1 =\lambda_2-\mathrm{{\bf a}}
= \frac{2|a_1|^2}{\sqrt{ (\mathrm{{\bf a}}-d_1)^2+4|a_1|^2 } +(\mathrm{{\bf a}}-d_1)}
< \frac{|a_1|^2}{\mathrm{{\bf a}}-d_1 } \leq \epsilon.   \nonumber
\end{aligned}
\end{equation}
Here we use $a_1\neq 0$ to confirm the strictly inequality in the above formula.

The following lemma enables us to count  the eigenvalues near the diagonal elements
via a deformation argument. It is an essential  ingredient in the proof of
 Lemma \ref{yuan's-quantitative-lemma} for general $n$.
\begin{lemma}
\label{refinement}
Let $A$ be 
a Hermitian $n$ by  $n$  matrix
\begin{equation}\label{matrix2}\left(\begin{matrix}d_1&&  &&a_{1}\\&d_2&& &a_2\\&&\ddots&&\vdots \\&& &  
d_{n-1}& a_{n-1}\\ \bar a_1&\bar a_2&\cdots& \bar a_{n-1}& \mathrm{{\bf a}} \nonumber
\end{matrix}\right)\end{equation} with $d_1,\cdots, d_{n-1}, a_1,\cdots, a_{n-1}$ fixed, and with $\mathrm{{\bf a}}$ variable.
Denote
$\lambda=(\lambda_1,\cdots, \lambda_n)$ as the the eigenvalues of $A$ with the order
$\lambda_1\leq \lambda_2 \leq\cdots \leq \lambda_n$.
Fix a positive constant $\epsilon$.
Suppose that the parameter $\mathrm{{\bf a}}$ in the matrix $A$ satisfies  the following quadratic growth condition
\begin{equation}
\label{guanjian2}
\begin{aligned}
\mathrm{{\bf a}} \geq \frac{1}{\epsilon}\sum_{i=1}^{n-1} |a_i|^2+\sum_{i=1}^{n-1}  \left(d_i+ (n-2) |d_i|\right)+ (n-2)\epsilon.
\end{aligned}
\end{equation}
Then for any $\lambda_{\alpha}$ $(1\leq \alpha\leq n-1)$ there exists a $d_{i_{\alpha}}$
with lower 
index $1\leq i_{\alpha}\leq n-1$ such that
\begin{equation}
\label{meishi}
\begin{aligned}
 |\lambda_{\alpha}-d_{i_{\alpha}}|<\epsilon,
\end{aligned}
\end{equation}
\begin{equation}
\label{mei-23-shi}
0\leq \lambda_{n}-\mathrm{{\bf a}} <(n-1)\epsilon + \left|\sum_{\alpha=1}^{n-1}(d_{\alpha}-d_{i_{\alpha}})\right|.
\end{equation}
\end{lemma}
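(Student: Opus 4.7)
My plan is to combine Cauchy's interlacing theorem (for the upper bounds on $\lambda_1,\dots,\lambda_{n-1}$) with a Courant--Fischer test on a carefully chosen subspace (for their lower bounds), and then use the trace identity to pass from these to the bound on $\lambda_n$.

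\textbf{Setup and choice of $i_\alpha$.} Let $\sigma$ be the permutation of $\{1,\dots,n-1\}$ with $d_{\sigma(1)}\leq\cdots\leq d_{\sigma(n-1)}$. The principal $(n-1)\times(n-1)$ submatrix of $A$ obtained by deleting the last row and column is already diagonal, so its sorted eigenvalues are precisely the $d_{\sigma(k)}$. Cauchy's interlacing theorem therefore gives
\[
\lambda_\alpha\;\leq\;d_{\sigma(\alpha)}\;\leq\;\lambda_{\alpha+1},\qquad 1\leq\alpha\leq n-1.
\]
I pair $i_\alpha:=\sigma(\alpha)$. Since $\sigma$ is a bijection of $\{1,\dots,n-1\}$, automatically $\sum_\alpha(d_\alpha-d_{i_\alpha})=0$, so the correction term in \eqref{mei-23-shi} drops out. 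The Rayleigh quotient on $e_n$ gives $\lambda_n\geq\mathrm{{\bf a}}$, and the trace identity rewrites
\[
\lambda_n-\mathrm{{\bf a}}\;=\;\sum_{\alpha=1}^{n-1}(d_{i_\alpha}-\lambda_\alpha),
\]
a non-negative sum by interlacing. So \eqref{mei-23-shi} reduces to \eqref{meishi}, i.e.\ to showing each summand is strictly less than $\epsilon$.

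\textbf{Lower bound on $\lambda_\alpha$.} Fix $\alpha\leq n-1$ and test Courant--Fischer on the $(n-\alpha+1)$-dimensional subspace
\[
S\;:=\;\mathrm{span}\bigl(e_{\sigma(\alpha)},\,e_{\sigma(\alpha+1)},\,\ldots,\,e_{\sigma(n-1)},\,e_n\bigr).
\]
Writing a unit vector $v\in S$ as $v=\sum_{j\geq\alpha}w_je_{\sigma(j)}+w_ne_n$ with $s^2=\sum_{j\geq\alpha}|w_j|^2$ and $t=|w_n|$, I exploit $d_{\sigma(j)}\geq d_{\sigma(\alpha)}$ for $j\geq\alpha$, Cauchy--Schwarz on the mixed cross-term, and a weighted AM--GM
\[
2\,s\,t\,|b|\;\leq\;(\mathrm{{\bf a}}-d_{\sigma(\alpha)})\,t^2+\frac{s^2|b|^2}{\mathrm{{\bf a}}-d_{\sigma(\alpha)}},\qquad |b|^2:=\sum_{j\geq\alpha}|a_{\sigma(j)}|^2\;\leq\;\sum_i|a_i|^2,
\]
so that the $(\mathrm{{\bf a}}-d_{\sigma(\alpha)})t^2$ terms cancel and what remains is
\[
v^\ast Av\;\geq\;d_{\sigma(\alpha)}\;-\;\frac{\sum_i|a_i|^2}{\mathrm{{\bf a}}-d_{\sigma(\alpha)}}.
\]
The max--min characterization then yields the same lower bound for $\lambda_\alpha$.

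\textbf{Invoking the hypothesis and conclusion.} A quick sign analysis shows $\sum_i(d_i+(n-2)|d_i|)\geq\max_i d_i$ for all $n\geq 2$, so \eqref{guanjian2} implies
\[
\mathrm{{\bf a}}-d_{\sigma(\alpha)}\;\geq\;\mathrm{{\bf a}}-\max_i d_i\;\geq\;\tfrac{1}{\epsilon}\sum_i|a_i|^2\;+\;(n-2)\epsilon,
\]
which is strictly greater than $\sum_i|a_i|^2/\epsilon$ for $n\geq 3$. Plugging this into the lower bound forces $\lambda_\alpha>d_{\sigma(\alpha)}-\epsilon$, and combined with $\lambda_\alpha\leq d_{\sigma(\alpha)}$ from interlacing this is exactly \eqref{meishi}. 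The $n=2$ case reduces to the explicit quadratic-root formula already recorded in the excerpt, where strictness is automatic once $a_1\neq 0$. Summing \eqref{meishi} into the trace identity from the Setup then gives $0\leq\lambda_n-\mathrm{{\bf a}}<(n-1)\epsilon$, which completes \eqref{mei-23-shi}.

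\textbf{Main obstacle.} The delicate step is extracting a second-order perturbation of size $\sum|a_i|^2/\mathrm{{\bf a}}$ rather than the crude Weyl bound $\sum|a_i|$; it is precisely the alignment of the test subspace $S$ with the upper part of the sorted diagonal, together with the AM--GM cancellation of the $t^2$ coefficient, that produces this scaling. The awkward looking term $\sum_i(d_i+(n-2)|d_i|)$ in \eqref{guanjian2} is then exactly what is needed to dominate $\max_i d_i$ regardless of the signs of the $d_i$, while the leftover $(n-2)\epsilon$ supplies just enough slack for the strict inequality.
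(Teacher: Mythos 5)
Your proof is correct, and it takes a genuinely different route from the paper's. The paper argues by contradiction: it partitions the indices $1,\dots,n-1$ into a ``bad'' set $\mathbf{B}$ of eigenvalues lying at distance $\geq\epsilon$ from all $d_i$ and a ``good'' set $\mathbf{G}$; it then uses the explicit characteristic-polynomial identity $(\lambda_\alpha-\mathbf{a})\prod_i(\lambda_\alpha-d_i)=\sum_i|a_i|^2\prod_{j\neq i}(\lambda_\alpha-d_j)$ to show that any bad eigenvalue must sit within $\tfrac{1}{\epsilon}\sum|a_i|^2$ of $\mathbf{a}$, and finally derives a contradiction from the trace identity if $\mathbf{B}\neq\emptyset$. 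You instead give a direct, constructive argument: Cauchy interlacing against the (diagonal) principal $(n-1)\times(n-1)$ block yields the two-sided bracketing $\lambda_\alpha\leq d_{\sigma(\alpha)}\leq\lambda_{\alpha+1}$ for the sorting permutation $\sigma$, and a Courant--Fischer max--min test on $\mathrm{span}(e_{\sigma(\alpha)},\dots,e_{\sigma(n-1)},e_n)$, combined with the weighted AM--GM that cancels the $t^2$ coefficient, produces the matching lower bound $\lambda_\alpha\geq d_{\sigma(\alpha)}-\sum|a_i|^2/(\mathbf{a}-d_{\sigma(\alpha)})$; the hypothesis then closes the gap to $<\epsilon$. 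Your choice $i_\alpha=\sigma(\alpha)$ has the additional benefit that $\sum_\alpha(d_\alpha-d_{i_\alpha})=0$, so the correction term in \eqref{mei-23-shi} is identically zero --- a slightly sharper conclusion than the paper records at this stage. The trade-off is that the paper's argument is essentially self-contained (only the trace and the secular equation), whereas yours leans on interlacing and Courant--Fischer; on the other hand your argument avoids proof by contradiction and makes the pairing of eigenvalues with diagonal entries explicit from the start, which is conceptually cleaner and dovetails nicely with the downstream Lemma \ref{refinement111}, where exactly this sorted pairing is needed.
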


\begin{proof}
Without loss of generality, we assume $\sum_{i=1}^{n-1} |a_i|^2>0$ and  $n\geq 3$
(otherwise we are done). 
Note that  the eigenvalues have
the order $\lambda_1\leq \lambda_2\leq \cdots \leq \lambda_n$, as in the assumption of lemma.
It is well known that, 
for a Hermitian matrix, any diagonal element is   less than or equals to   the  largest eigenvalue.
 In particular,
 \begin{equation}
 \label{largest-eigen1}
 \lambda_n \geq \mathrm{{\bf a}}.
 \end{equation}

We only need to prove   \eqref {meishi}, since  \eqref{mei-23-shi} is a consequence of  \eqref{meishi}, \eqref{largest-eigen1}  and
\begin{equation}
\label{trace}
 \sum_{i=1}^{n}\lambda_i=\mbox{tr}(A)=\sum_{\alpha=1}^{n-1} d_{\alpha}+\mathrm{{\bf a}}.
 \end{equation}

 Let's denote   $I=\{1,2,\cdots, n-1\}$. We divide the index set   $I$ into two subsets:
$${\bf B}=\{\alpha\in I: |\lambda_{\alpha}-d_{i}|\geq \epsilon, \mbox{   }\forall i\in I \}, $$
$$ {\bf G}=I\setminus {\bf B}=\{\alpha\in I: \mbox{There exists $i\in I$ such that } |\lambda_{\alpha}-d_{i}| <\epsilon\}.$$

To complete the proof, it only requires to prove ${\bf G}=I$ or equivalently ${\bf B}=\emptyset$.
  It is easy to see that  for any $\alpha\in {\bf G}$, one has
   \begin{equation}
   \label{yuan-lemma-proof1}
   \begin{aligned}
   |\lambda_\alpha|< \sum_{i=1}^{n-1}|d_i| + \epsilon.
   \end{aligned}
   \end{equation}

   Fix $ \alpha\in {\bf B}$,  we are going to give the estimate for $\lambda_\alpha$.
The eigenvalue $\lambda_\alpha$ satisfies
\begin{equation}
\label{characteristicpolynomial}
\begin{aligned}
(\lambda_{\alpha} -\mathrm{{\bf a}})\prod_{i=1}^{n-1} (\lambda_{\alpha}-d_i)
= \sum_{i=1}^{n-1} |a_{i}|^2 \prod_{j\neq i} (\lambda_{\alpha}-d_{j}).
\end{aligned}
\end{equation}
By the definition of ${\bf B}$, for  $\alpha\in {\bf B}$, one then has $|\lambda_{\alpha}-d_i|\geq \epsilon$ for any $i\in I$.
We therefore derive
\begin{equation}
\begin{aligned}
|\lambda_{\alpha}-\mathrm{{\bf a}} |\leq \sum_{i=1}^{n-1} \frac{|a_i|^2}{|\lambda_{\alpha}-d_{i}|}\leq
\frac{1}{\epsilon}\sum_{i=1}^{n-1} |a_i|^2, \quad \mbox{ if } \alpha\in {\bf B}.
\end{aligned}
\end{equation}
Hence,  for $\alpha\in {\bf B}$, we obtain
\begin{equation}
\label{yuan-lemma-proof2}
\begin{aligned}
 \lambda_\alpha \geq \mathrm{{\bf a}}-\frac{1}{\epsilon}\sum_{i=1}^{n-1} |a_i|^2.
\end{aligned}
\end{equation}

For a set ${\bf S}$, we denote $|{\bf S}|$ the  cardinality of ${\bf S}$.
We shall use proof by contradiction to prove  ${\bf B}=\emptyset$.
Assume ${\bf B}\neq \emptyset$.
Then $|{\bf B}|\geq 1$, and so $|{\bf G}|=n-1-|{\bf B}|\leq n-2$. 

We compute the trace of the matrix $A$ as follows:
\begin{equation}
\begin{aligned}
\mbox{tr}(A)=\,&
\lambda_n+
\sum_{\alpha\in {\bf B}}\lambda_{\alpha} + \sum_{\alpha\in  {\bf G}}\lambda_{\alpha}\\
> \,&
\lambda_n+
|{\bf B}| (\mathrm{{\bf a}}-\frac{1}{\epsilon}\sum_{i=1}^{n-1} |a_i|^2 )-|{\bf G}| (\sum_{i=1}^{n-1}|d_i|+\epsilon ) \\
\geq \,&
 2\mathrm{{\bf a}}-\frac{1}{\epsilon}\sum_{i=1}^{n-1} |a_i|^2 -(n-2) (\sum_{i=1}^{n-1}|d_i|+\epsilon )
\\
\geq \,& \sum_{i=1}^{n-1}d_i +\mathrm{{\bf a}}= \mbox{tr}(A),
\end{aligned}
\end{equation}
where we use  \eqref{guanjian2},   \eqref{largest-eigen1}, \eqref{yuan-lemma-proof1} and \eqref{yuan-lemma-proof2}.
This is a contradiction.
We have ${\bf B}=\emptyset$.
Therefore, ${\bf G}=I$ and  the proof is complete.
\end{proof}

We consequently obtain

\begin{lemma}
\label{refinement111}
Let $A(\mathrm{{\bf a}})$ be an $n\times n$ Hermitian  matrix
\begin{equation}
A(\mathrm{{\bf a}})=\left(
\begin{matrix}
d_1&&  &&a_{1}\\
&d_2&& &a_2\\
&&\ddots&&\vdots \\
&& &  d_{n-1}& a_{n-1}\\
\bar a_1&\bar a_2&\cdots& \bar a_{n-1}& \mathrm{{\bf a}} \nonumber
\end{matrix}
\right)
\end{equation}
with
 $d_1,\cdots, d_{n-1}, a_1,\cdots, a_{n-1}$ fixed, and with $\mathrm{{\bf a}}$ variable.
 Assume that $d_1, d_2, \cdots, d_{n-1} $
 are distinct with each other, i.e. $d_{i}\neq d_{j}, \forall i\neq j$.
Denote
$\lambda=(\lambda_1,\cdots, \lambda_n)$ as the the eigenvalues of $A(\mathrm{{\bf a}})$.
Given a positive constant $\epsilon$ with
$0<\epsilon \leq \frac{1}{2}\min\{|d_i-d_j|: \forall i\neq j\}$, 
if the parameter  $\mathrm{{\bf a}}$ satisfies the quadratic growth condition 
\begin{equation}
 \begin{aligned}
\label{guanjian1}
\mathrm{{\bf a}}\geq \frac{1}{\epsilon}\sum_{i=1}^{n-1}|a_i|^2 +(n-1)\sum_{i=1}^{n-1} |d_i|+(n-2)\epsilon,
\end{aligned}
\end{equation}
then the eigenvalues behave like
\begin{equation}
\begin{aligned}
 \,& |d_{\alpha}-\lambda_{\alpha}|
<   \epsilon, \forall 1\leq \alpha\leq n-1,\\
\,& 0\leq \lambda_{n}-\mathrm{{\bf a}}
<  (n-1)\epsilon.  \nonumber
\end{aligned}
\end{equation}
\end{lemma}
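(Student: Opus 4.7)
I plan to deduce the lemma from Lemma \ref{refinement} via a continuity (deformation) argument. First I observe that, since $d_i+(n-2)|d_i|\le(n-1)|d_i|$, the bound \eqref{guanjian1} is strictly stronger than \eqref{guanjian2}, so Lemma \ref{refinement} applies directly to $A(\mathbf{a})$: for every $\alpha\in\{1,\dots,n-1\}$ there is an index $i_\alpha$ with $|\lambda_\alpha-d_{i_\alpha}|<\epsilon$. Because of the hypothesis $\epsilon\le\tfrac12\min_{i\ne j}|d_i-d_j|$, the open intervals $I_j:=(d_j-\epsilon,d_j+\epsilon)$ are pairwise disjoint, separated by gaps of length at least $\epsilon$, and hence each $\lambda_\alpha$ ($1\le\alpha\le n-1$) lies in a uniquely determined $I_{i_\alpha}$.

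To upgrade this to the labeled bound $|\lambda_\alpha-d_\alpha|<\epsilon$, I will deform the off-diagonal entries. Consider the one-parameter family $A(t)$ obtained by replacing each $a_i$ in $A(\mathbf{a})$ by $ta_i$, for $t\in[0,1]$; thus $A(1)=A(\mathbf{a})$ and $A(0)=\diag(d_1,\dots,d_{n-1},\mathbf{a})$. Since $\sum_i|ta_i|^2\le\sum_i|a_i|^2$, the growth condition \eqref{guanjian1} is preserved along the deformation and Lemma \ref{refinement} applies uniformly in $t$. The eigenvalues of $A(t)$ can be enumerated as continuous functions $\lambda_1(t),\dots,\lambda_n(t)$, and at every $t$: (i) $n-1$ of them fall in $\bigsqcup_j I_j$ by Lemma \ref{refinement}; (ii) the largest satisfies $\lambda_n(t)\ge\mathbf{a}$, since $\mathbf{a}$ is a diagonal entry of the Hermitian matrix $A(t)$; and (iii) by \eqref{guanjian1} one has $\mathbf{a}>\max_j d_j+\epsilon$, so the top branch stays strictly separated from every $I_j$.

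Hence the continuous branches $\lambda_1(t),\dots,\lambda_{n-1}(t)$ cannot cross the gaps between the $I_j$'s, nor merge with the $\mathbf{a}$-branch; the assignment of each branch to its interval is therefore constant in $t$. Labeling the branches at $t=0$ so that $\lambda_\alpha(0)=d_\alpha$ for $1\le\alpha\le n-1$ and $\lambda_n(0)=\mathbf{a}$, the constancy yields $\lambda_\alpha(1)\in I_\alpha$, i.e.\ $|\lambda_\alpha-d_\alpha|<\epsilon$ at $t=1$. The estimate for $\lambda_n$ is then immediate from the trace identity $\sum_{\alpha=1}^{n-1}\lambda_\alpha+\lambda_n=\sum_{i=1}^{n-1}d_i+\mathbf{a}$, which gives $0\le\lambda_n-\mathbf{a}=\sum_{\alpha=1}^{n-1}(d_\alpha-\lambda_\alpha)<(n-1)\epsilon$.

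The main technical point is the constancy of the interval assignment along the deformation: one must rule out both an exchange between two intervals $I_i,I_j$ and a collapse of a lower branch onto the $\mathbf{a}$-branch. Both are precluded by the strict separation estimates provided by $\epsilon\le\tfrac12\min_{i\ne j}|d_i-d_j|$ and by the quantitative bound \eqref{guanjian1} on $\mathbf{a}$, which together guarantee uniform positive gaps throughout $t\in[0,1]$.
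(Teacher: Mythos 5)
Your proof is correct, and it takes a genuinely different deformation than the paper's. The paper fixes the off-diagonal entries and deforms $\mathbf{a}\in[P_0,+\infty)$; it then needs the asymptotic statement (Lemma~\ref{lemmaCNS3}, i.e.\ \cite[Lemma 1.2]{CNS3}) at $\mathbf{a}\to+\infty$ to establish that each interval $I_\alpha$ contains at least one eigenvalue in the limit, so that the constant counting function $\mathrm{{\bf Card}}_\alpha$ must equal~$1$. You instead fix $\mathbf{a}$ and deform the off-diagonal block $a_i\mapsto t a_i$, $t\in[0,1]$. This choice makes the endpoint $t=0$ a diagonal matrix, where the eigenvalue/interval assignment is immediate, so the appeal to the CNS asymptotic lemma is eliminated and the argument becomes self-contained given Lemma~\ref{refinement}. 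Two small checks that you handled correctly and that are worth keeping explicit: (i) $\sum_i|ta_i|^2\le\sum_i|a_i|^2$ shows \eqref{guanjian1} (hence also \eqref{guanjian2}, since $d_i+(n-2)|d_i|\le(n-1)|d_i|$) persists along the whole path, so Lemma~\ref{refinement} applies uniformly in $t$; and (ii) since $\mathbf a$ is fixed and $\mathbf a>\sum_i|d_i|+\epsilon\ge d_j+\epsilon$ for every $j$, the top branch $\lambda_n(t)\ge\mathbf a$ stays uniformly separated from every $\overline{I_j}$, preventing the merge you explicitly rule out. Combined with the disjointness of the $I_j$'s under $\epsilon\le\tfrac12\min_{i\ne j}|d_i-d_j|$ and the continuity of the ordered eigenvalues, your interval-assignment-is-constant conclusion is sound, and the trace identity closes the estimate on $\lambda_n$ exactly as in the paper. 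Overall your variant is slightly more elementary than the paper's while proving the same statement.
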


\begin{proof}
The proof is based on Lemma  \ref{refinement} and a deformation argument.
Without loss of generality,  we assume $n\geq 3$ and  $\sum_{i=1}^{n-1} |a_i|^2>0$
 (otherwise  
  we are done).
 Moreover, we assume in addition that $d_1<d_2\cdots<d_{n-1}$ and the eigenvalues have the order
 $$\lambda_1\leq \lambda_2\cdots \leq \lambda_{n-1}\leq \lambda_n.$$

  Fix $\epsilon\in (0,\mu_0]$, where $\mu_0=\frac{1}{2}\min\{|d_i-d_j|: \forall i\neq j\}$.
  We denote
   $$I_{i}=(d_i-\epsilon,d_i+\epsilon)$$
     and $$P_0=\frac{1}{\epsilon}\sum_{i=1}^{n-1} |a_i|^2+ (n-1)\sum_{i=1}^{n-1} |d_i|+ (n-2)\epsilon.$$
    Since $0<\epsilon \leq \mu_0$,  the intervals disjoint each other
  \begin{equation}
\label{daqin122}
I_\alpha\bigcap I_\beta=\emptyset \mbox{ for }  1\leq \alpha<\beta\leq n-1.
\end{equation}

In what follows,
 we assume that  the parameter  $\mathrm{{\bf a}}$ satisfies \eqref{guanjian1} and  the Greek letters $\alpha,  \beta$
  range from $1$ to $n-1$.
We define a function
$$\mathrm{{\bf Card}}_\alpha: [P_0,+\infty)\rightarrow \mathbb{N}$$
 to count the eigenvalues which lie in $I_\alpha$.
 (Note that when the eigenvalues are not distinct,  the function $\mathrm{{\bf Card}}_\alpha$ means the summation of all the algebraic
  multiplicities
 of  distinct eigenvalues 
which  lie in $I_\alpha$). 
  This function measures the number of the  eigenvalues which lie in $I_\alpha$.

 We are going to prove that $\mathrm{{\bf Card}}_\alpha$ is continuous on $[P_0,+\infty)$ in an attempt to
 complete the proof.

 First Lemma \ref{refinement} asserts that if $\mathrm{{\bf a}} \geq P_0$, then
 \begin{equation}
\label{daqin111}
\begin{aligned}
\lambda_{\alpha}\in  \bigcup_{i=1}^{n-1} I_i, \mbox{   } \forall 1\leq \alpha\leq n-1.
\end{aligned}
\end{equation}

 It is well known that the largest eigenvalue $\lambda_n\geq \mathrm{{\bf a}}$,
 while the smallest eigenvalue $\lambda_1 \leq d_1$.
 Combining it with    \eqref{daqin111}
one has
 \begin{equation}
 \label{largest1}
\begin{aligned}
\lambda_{n} \geq  \mathrm{{\bf a}}
>\,& \sum_{i=1}^{n-1}|d_i| +\epsilon.
\end{aligned}
\end{equation}
Thus $\lambda_n\in \mathbb{R}\setminus (\bigcup_{i=1}^{n-1} \overline{I_i})$
where $\overline{I_i}$ denotes the closure of $I_i$.
Therefore, 
the function $\mathrm{{\bf Card}}_\alpha$
is continuous (and so it is constant), since  
 \eqref{daqin111}, \eqref{daqin122},  $\lambda_n\in \mathbb{R}\setminus (\overline{\bigcup_{i=1}^{n-1} I_i})$
and  the eigenvalues of $A(\mathrm{{\bf a}})$   depend  on the parameter  $\mathrm{{\bf a}}$ continuously.

The continuity of $\mathrm{{\bf Card}}_\alpha(\mathrm{{\bf a}})$ plays a crucial role in this proof.
Following the line of the proof of  Lemma \ref{lemmaCNS3} 
(\cite[Lemma 1.2]{CNS3}),
in the setting of Hermitian matrices,  one can show that for $1\leq \alpha\leq n-1$,
\begin{equation}
\label{yuanrr-lemma-12321}
\lim_{\mathrm{{\bf a}}\rightarrow +\infty} \mathrm{{\bf Card}}_{\alpha}(\mathrm{{\bf a}}) \geq 1.
\end{equation}
It follows from \eqref{largest1},  \eqref{yuanrr-lemma-12321}
 and the  continuity  of $\mathrm{{\bf Card}}_\alpha $ that
  \begin{equation}
\label{daqin142224}
\mathrm{{\bf Card}}_{\alpha}(\mathrm{{\bf a}})= 1,
 \mbox{   } \forall \mathrm{{\bf a}}\in [P_0, +\infty),  \mbox{   } 1\leq \alpha\leq n-1.  \nonumber
\end{equation}
Together with \eqref{daqin111}, we prove that,
for any   $1\leq \alpha\leq n-1$, the interval $I_\alpha=(d_{\alpha}-\epsilon,d_{\alpha}+\epsilon)$
contains the eigenvalue $\lambda_\alpha$.
We thus complete  the proof.
\end{proof}




 Suppose that there are two distinct indices $i_0, j_0$ ($i_0\neq j_0$) such that $d_{i_{0}}= d_{j_{0}}$.
  Then the characteristic polynomial of $A$
can be rewritten as the following
\begin{equation}
\begin{aligned}
(\lambda-d_{i_{0}})\left[(\lambda-\mathrm{{\bf a}})\prod_{i \neq i_{0}} (\lambda-d_i)-|a_{i_{0}}|^{2}\prod_{j\neq j_0, j\neq i_0} (\lambda-d_j)
-\sum_{i\neq i_0}|a_i|^2\prod_{j\neq i, j\neq i_{0}} (\lambda-d_j)\right]. \nonumber
\end{aligned}
\end{equation}
So $\lambda_{i_0}=d_{i_{0}}$ is an eigenvalue of $A$ for any $a\in \mathbb{R}$.
Noticing that the following polynomial
$$(\lambda-\mathrm{{\bf a}})\prod_{i \neq i_{0}} (\lambda-d_i)
-|a_{i_{0}}|^{2}\prod_{j\neq j_0, j\neq i_0} (\lambda-d_j)
-\sum_{i\neq i_0}|a_i|^2\prod_{j\neq i, j\neq i_{0}} (\lambda-d_j)$$
 is the characteristic polynomial of the $(n-1)\times (n-1)$ Hermitian  matrix
\begin{equation}
\left(
\begin{matrix}
d_1&&  &&&&a_{1}\\
&\ddots && &&&\vdots \\
&&\widehat{d_{i_{0}}}&& &&\widehat{a_{i_{0}}}\\
&&&\ddots&&&\vdots \\
&&& &  d_{j_{0}}&& (|a_{j_{0}}|^{2}+|a_{i_{0}}|^{2})^{\frac{1}{2}}\\
&&&& &   \ddots              \\
\bar a_1&\cdots&\widehat{ \bar a_{i_{0}}}&\cdots& (|a_{j_{0}}|^{2}+|a_{i_{0}}|^{2})^{\frac{1}{2}} &\cdots & \mathrm{{\bf a}}   \nonumber
\end{matrix}
\right)
\end{equation}
where $\widehat{*}$ indicates deletion.
Therefore, $(\lambda_1, \cdots, \widehat{\lambda_{i_{0}}}, \cdots, \lambda_{n})$ are the eigenvalues of the above $(n-1)\times (n-1)$
Hermitian  matrix.
Hence,  we obtain 
\begin{lemma}
\label{refinement3}
Let $A$ be as in Lemma \ref{yuan's-quantitative-lemma} an $n\times n$ Hermitian matrix.
Let
\[\mathcal{I}=
\begin{cases}
\mathbb{R}^{+}=(0,+\infty) \,& \mbox{ if } d_{i}=d_{1}, \forall 2\leq i \leq n-1;\\
 \left(0,\mu_0\right),  \mbox{  } \mu_0=\frac{1}{2}\min\{|d_{i}-d_{j}|: d_{i}\neq d_{j}\} \,& \mbox{ otherwise.}
\end{cases}
\]
Denote
$\lambda=(\lambda_1,\cdots, \lambda_n)$ by the the eigenvalues of $A$. Fix  $\epsilon\in \mathcal{I}$.
Suppose that  the parameter $\mathrm{{\bf a}}$ in $A$ satisfies  \eqref{guanjian1}.
 Then the eigenvalues behave like
\begin{equation}
\begin{aligned}
\,& |d_{\alpha}-\lambda_{\alpha}|
< \epsilon, \mbox{    } \forall 1\leq \alpha\leq n-1,\\
\,&0 \leq \lambda_{n}-\mathrm{{\bf a}}
< (n-1)\epsilon. \nonumber
\end{aligned}
\end{equation}
\end{lemma}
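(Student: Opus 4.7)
My proof strategy is to reduce Lemma~\ref{refinement3} to Lemma~\ref{refinement111}, which treats the case of pairwise distinct diagonal entries, by exploiting the block reduction observation recorded just before the statement. Recall that whenever two diagonal entries coincide, say $d_{i_0}=d_{j_0}$, the characteristic polynomial of $A$ factors off a linear factor $(\lambda-d_{i_0})$, and the remaining $n-1$ eigenvalues are those of the $(n-1)\times(n-1)$ Hermitian matrix obtained by deleting row and column $i_0$ and replacing the entry $a_{j_0}$ by $(|a_{j_0}|^2+|a_{i_0}|^2)^{1/2}$. Iterating this step will reduce the problem to a smaller matrix whose diagonal entries (apart from $\mathrm{{\bf a}}$) are the distinct values among the $d_i$'s.

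More precisely, I would partition the index set $\{1,\ldots,n-1\}$ into equivalence classes $C_1,\ldots,C_m$ according to equality of the $d_i$'s, with common value $D_j$ on $C_j$ and with $D_1,\ldots,D_m$ pairwise distinct. Successive application of the reduction peels off $k_j-1$ eigenvalues of $A$ exactly equal to $D_j$ for each $j$ (where $k_j=|C_j|$), and represents the remaining $m+1$ eigenvalues as the spectrum of an $(m+1)\times(m+1)$ Hermitian matrix $\widetilde A$ with diagonal $(D_1,\ldots,D_m,\mathrm{{\bf a}})$ and off-diagonal entries $b_j=\bigl(\sum_{i\in C_j}|a_i|^2\bigr)^{1/2}$ in the last row and column. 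In particular $\sum_j b_j^2=\sum_i |a_i|^2$ and $\sum_j |D_j|\leq \sum_i |d_i|$, while $m\leq n-1$, so condition~\eqref{guanjian1} imposed on $A$ is strictly stronger than the corresponding quadratic growth hypothesis required by Lemma~\ref{refinement111} for $\widetilde A$ with the same $\epsilon$. Moreover, by definition of $\mu_0$ the constraint $\epsilon\in(0,\mu_0)$ is exactly the hypothesis $\epsilon\leq \tfrac12\min_{j\neq j'}|D_j-D_{j'}|$ of Lemma~\ref{refinement111}. Applying that lemma to $\widetilde A$ yields $|D_j-\widetilde\lambda_j|<\epsilon$ for $1\leq j\leq m$ and $0\leq \widetilde\lambda_{m+1}-\mathrm{{\bf a}}<m\epsilon\leq (n-1)\epsilon$. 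After re-indexing the full eigenvalue list of $A$ so that each $\lambda_\alpha$ matches its corresponding $d_\alpha$ (it is either $D_j$ itself, when it comes from a peeled-off factor, or a $\widetilde\lambda_j$ with $|d_\alpha-\widetilde\lambda_j|<\epsilon$), and setting $\lambda_n:=\widetilde\lambda_{m+1}$, the desired inequalities follow.

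There remains the degenerate case $d_1=\cdots=d_{n-1}=:d$, in which $m=1$ and Lemma~\ref{refinement111} does not apply (there are no distinct pairs among the $D_j$'s). Here the very same reduction leaves $n-2$ eigenvalues exactly equal to $d$ and a single $2\times 2$ block with diagonal $(d,\mathrm{{\bf a}})$ and off-diagonal entry $\bigl(\sum_i |a_i|^2\bigr)^{1/2}$, whose eigenvalues are computed explicitly as in the $n=2$ step of the proof of Lemma~\ref{yuan's-quantitative-lemma}. Under~\eqref{guanjian1} one checks $\mathrm{{\bf a}}-d\geq \epsilon^{-1}\sum_i |a_i|^2$, which suffices to bound $d-\lambda_-=\lambda_+-\mathrm{{\bf a}}<\epsilon\leq (n-1)\epsilon$. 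The main obstacle, essentially bookkeeping rather than analysis, is to organize the iterated reduction so that the eigenvalue identifications and the transformation $\sum_i |a_i|^2\mapsto \sum_j b_j^2$ come out cleanly; once that is in place, the claimed inequalities fall out of Lemma~\ref{refinement111} and the elementary $2\times 2$ calculation.
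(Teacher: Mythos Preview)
Your proposal is correct and follows essentially the same route as the paper: the paper places Lemma~\ref{refinement3} immediately after the block-reduction observation (``Hence, we obtain\ldots''), and its implicit proof is precisely the iterated peeling off of repeated diagonal entries to reduce to a smaller matrix with distinct $D_j$'s, followed by an application of Lemma~\ref{refinement111} (with the fully degenerate $m=1$ case handled by the explicit $2\times2$ computation). Your write-up is a faithful, more detailed rendering of that argument, including the verification that condition~\eqref{guanjian1} passes to the reduced matrix.
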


Applying Lemmas \ref{refinement} and \ref{refinement3},
we  complete the proof of Lemma \ref{yuan's-quantitative-lemma}  without restriction to the applicable
 scope of $\epsilon$.

\begin{proof}
[Proof of Lemma \ref{yuan's-quantitative-lemma}]
We follow the outline of the proof of Lemma \ref{refinement111}.
Without loss of generality, we may assume
 $$n\geq 3, \mbox{ } \sum_{i=1}^{n-1} |a_i|^2>0,  \mbox{ }  d_1\leq d_2\leq \cdots \leq d_{n-1}  \mbox{ and } \lambda_1\leq \lambda_2 \leq \cdots \lambda_{n-1}\leq \lambda_n.$$

Fix $\epsilon>0$.  Let $I'_\alpha=(d_\alpha-\frac{\epsilon}{2n-3}, d_\alpha+\frac{\epsilon}{2n-3})$ and
$$P_0'=\frac{2n-3}{\epsilon}\sum_{i=1}^{n-1} |a_i|^2+ (n-1)\sum_{i=1}^{n-1} |d_i|+ \frac{(n-2)\epsilon}{2n-3}.$$
  In what follows we assume \eqref{guanjian1-yuan} holds.
The connected components of $\bigcup_{\alpha=1}^{n-1} I_{\alpha}'$ are denoted as in the following:
$$J_{1}=\bigcup_{\alpha=1}^{j_1} I_\alpha', \mbox{ }
J_2=\bigcup_{\alpha=j_1+1}^{j_2} I_\alpha', \mbox{ }  \cdots, J_i =\bigcup_{\alpha=j_{i-1}+1}^{j_i} I_\alpha', \mbox{ } \cdots,
 J_{m} =\bigcup_{\alpha=j_{m-1}+1}^{n-1} I_\alpha'.$$
 Moreover,
   \begin{equation}
   \begin{aligned}
J_i\bigcap J_k=\emptyset, \mbox{ for }   1\leq i<k\leq m. \nonumber
\end{aligned}
\end{equation}
It plays formally the role of \eqref{daqin122} in the proof of Lemma \ref{refinement111}.

 As in the proof of Lemma \ref{refinement111},
 we let
  $$ \mathrm{{\bf \widetilde{Card}}}_k:[P_0',+\infty)\rightarrow \mathbb{N}$$
be the function that counts the eigenvalues which lie in $J_k$.
   (Note that when the eigenvalues are not distinct,  the function $\mathrm{{\bf \widetilde{Card}}}_k$ denotes  the summation of all the algebraic
    multiplicities of distinct eigenvalues which
 lie in $J_k$).
By Lemma \ref{refinement} and  $$\lambda_n \geq {\bf a}\geq P_0'>\sum_{i=1}^{n-1}|d_i|+\frac{\epsilon}{2n-3}$$ 
we conclude that
 if the parameter $\mathrm{{\bf a}}$ satisfies the quadratic growth condition \eqref{guanjian1-yuan} then
   \begin{equation}
  \label{yuan-lemma-proof5}
  \begin{aligned}
   \,& \lambda_n \in \mathbb{R}\setminus (\bigcup_{k=1}^{n-1} \overline{I_k'})
   =\mathbb{R}\setminus (\bigcup_{i=1}^m \overline{J_i}), \\
  \,& \lambda_\alpha \in \bigcup_{i=1}^{n-1} I_{i}'=\bigcup_{i=1}^m J_{i} \mbox{ for } 1\leq\alpha\leq n-1.
  \end{aligned}
  \end{equation}
Similarly, $\mathrm{{\bf \widetilde{Card}}}_i(\mathrm{{\bf a}})$ is a continuous function
 with respect to the variable $\mathrm{{\bf a}}$ when ${\bf a}\geq P_0'$. So it is a constant.
 Combining it with Lemma \ref{refinement3}, 
 we see that 
$$ \mathrm{{\bf \widetilde{Card}}}_i(\mathrm{{\bf a}})
=j_i-j_{i-1}$$
for ${\bf a}\geq P_0'$. Here we denote $j_0=0$ and $j_m=n-1$.
We thus know that the   $(j_i-j_{i-1})$ eigenvalues
$$\lambda_{j_{i-1}+1}, \lambda_{j_{i-1}+2}, \cdots, \lambda_{j_i}$$
lie in the connected component $J_{i}$.
Thus, for any $j_{i-1}+1\leq \gamma \leq j_i$,  we see $I_\gamma'\subset J_i$ and  $\lambda_\gamma$
   lies in the connected component $J_{i}$.
Therefore,
$$|\lambda_\gamma-d_\gamma| < \frac{(2(j_i-j_{i-1})-1) \epsilon}{2n-3}\leq \epsilon.$$
Here we use the fact that $d_\gamma$ is the midpoint of  $I_\gamma'$ and $J_i\subset \mathbb{R}$ is an open subset.
\end{proof}

   \section{Quantitative boundary estimate: double normal case}
  \label{sec4}
  
This section is devoted to deriving quantitative version of boundary estimate for double normal derivative. 
The key ingredients in the proof 
are Lemmas   \ref{lemma3.4} and \ref{yuan's-quantitative-lemma}.
 
\begin{proposition}
\label{proposition-quar-yuan1}
Let $(M, J,\omega)$ be a compact Hermitian manifold with $C^2$ boundary satisfying \eqref{bdry-assumption1}.  Let $\xi_i$ be as in \eqref{xi-alpha} and \eqref{xi-n}.
Let $u\in  C^{2}(\bar M)$ be an \textit{admissible} solution
 to Dirichlet problem \eqref{mainequ}. Suppose  \eqref{elliptic}-\eqref{addistruc}, 
 \eqref{existenceofsubsolution} and \eqref{nondegenerate} hold. Then 
\begin{equation}
\begin{aligned}
  \mathfrak{g}(\xi_n, J\bar \xi_n)(p_0)
  \leq C\left(1 +  \sum_{\alpha=1}^{n-1} |\mathfrak{g}(\xi_\alpha, J\bar \xi_n)(p_0)|^2\right),  \quad \forall p_0\in\partial M, \nonumber
\end{aligned}
\end{equation}
where $C$ is a uniform positive constant  depending  only on   $|u|_{C^0(\bar M)}$, 
$|\underline{u}|_{C^{2}(\bar M)}$, $\partial M$ up to second order derivatives 
 and other known data
 (but neither on $\sup_{M}|\nabla u|$ nor on $(\delta_{\psi,f})^{-1}$).
\end{proposition}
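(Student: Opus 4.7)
The plan is to argue by contradiction. Fix $p_0\in\partial M$ and pick a local unitary frame $\xi_1,\ldots,\xi_n$ near $p_0$ with $\xi_n$ as in \eqref{xi-n}, arranged so that both the tangential block $(\mathfrak{g}(\xi_\alpha,J\bar\xi_\beta))_{1\le\alpha,\beta\le n-1}$ of $\mathfrak{g}$ and the Levi form of $\partial M$ are diagonal at $p_0$. Set $d_\alpha=\mathfrak{g}(\xi_\alpha,J\bar\xi_\alpha)(p_0)$, $\underline d_\alpha=\underline{\mathfrak{g}}(\xi_\alpha,J\bar\xi_\alpha)(p_0)$, $\mathrm{{\bf a}}=\mathfrak{g}(\xi_n,J\bar\xi_n)(p_0)$, $\underline{\mathrm{{\bf a}}}=\underline{\mathfrak{g}}(\xi_n,J\bar\xi_n)(p_0)$, $a_\alpha=\mathfrak{g}(\xi_\alpha,J\bar\xi_n)(p_0)$, and denote by $\kappa_\alpha$ the Levi-form eigenvalues. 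The concavity of $F$, $F(\underline{\mathfrak{g}})\ge F(\mathfrak{g})=\psi$, and $u=\underline u$ on $\partial M$ yield $u\ge\underline u$ inside $\bar M$ by the maximum principle, so $\tau:=(u-\underline u)_\nu(p_0)\ge 0$ is bounded by $|\nabla u|_{C^0(\partial M)}$; differentiating $(u-\underline u)|_{\partial M}\equiv 0$ twice tangentially gives the standard boundary formula
\[
d_\alpha=\underline d_\alpha+\tau(-\kappa_\alpha),\qquad 1\le\alpha\le n-1.
\]
Since $\lambda(\underline{\mathfrak{g}}(p_0))\in\Gamma$ forces $(\underline d_1,\ldots,\underline d_{n-1})\in\Gamma_\infty$, and \eqref{bdry-assumption1} gives $(-\kappa_1,\ldots,-\kappa_{n-1})\in\overline\Gamma_\infty$, the cone inclusion $\Gamma_\infty+\overline\Gamma_\infty\subset\Gamma_\infty$ yields $(d_1,\ldots,d_{n-1})\in\Gamma_\infty$ with uniformly bounded entries, and there exists $s_\kappa$ depending only on $\partial M$ with $(-\kappa_1,\ldots,-\kappa_{n-1},s_\kappa)\in\overline\Gamma$.

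Assume for contradiction that $\mathrm{{\bf a}}$ exceeds a large multiple of $1+\sum_\alpha|a_\alpha|^2$, so that for a small $\epsilon>0$ to be chosen the quadratic growth condition \eqref{guanjian1-yuan} holds. Lemma \ref{yuan's-quantitative-lemma} forces
\[
|\lambda_\alpha(\mathfrak{g})(p_0)-d_\alpha|<\epsilon,\qquad 1\le\alpha\le n-1,\qquad \lambda_n(\mathfrak{g})(p_0)\ge\mathrm{{\bf a}},
\]
so $\lambda(\mathfrak{g})(p_0)$ is close to $(d_1,\ldots,d_{n-1},\mathrm{{\bf a}})$. I would then decompose
\[
(d_1,\ldots,d_{n-1},\mathrm{{\bf a}})-(\underline d_1,\ldots,\underline d_{n-1},\underline{\mathrm{{\bf a}}})=\tau(-\kappa_1,\ldots,-\kappa_{n-1},s_\kappa)+(0,\ldots,0,\mathrm{{\bf a}}-\underline{\mathrm{{\bf a}}}-\tau s_\kappa);
\]
for $\mathrm{{\bf a}}\ge\underline{\mathrm{{\bf a}}}+\tau s_\kappa$ both summands lie in $\overline\Gamma$, since $(0,\ldots,0,1)\in\overline{\Gamma_n}\subset\overline\Gamma$. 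By the equivalent condition \eqref{addistruc-10-2} of Lemma \ref{lemma3.4} (extended to $\overline\Gamma$-increments by continuity of $f$ on $\Gamma$) combined with the strict monotonicity $f_n>0$ of \eqref{elliptic}, one obtains
\[
f(d_1,\ldots,d_{n-1},\mathrm{{\bf a}})>f(\underline d_1,\ldots,\underline d_{n-1},\underline{\mathrm{{\bf a}}})\ge\psi(p_0)
\]
strictly, once $\mathrm{{\bf a}}$ surpasses a threshold depending only on $\underline{\mathrm{{\bf a}}}$, $\tau$, $s_\kappa$, and the size of $f_n$ along the segment. Choosing $\epsilon$ small enough that the Lemma \ref{yuan's-quantitative-lemma} perturbation in $f$-value is strictly smaller than this excess forces $f(\lambda(\mathfrak{g}))(p_0)>\psi(p_0)$, contradicting $f(\lambda(\mathfrak{g}))=\psi$. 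Hence $\mathrm{{\bf a}}\le C(1+\sum_\alpha|a_\alpha|^2)$.

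The main obstacle is to make the contradiction quantitative and, crucially, free of $(\delta_{\psi,f})^{-1}$. The strict excess in $f$-value should be supplied by the integrated $f_n>0$ contribution along the segment from $(d_1,\ldots,d_{n-1},\underline{\mathrm{{\bf a}}}+\tau s_\kappa)$ to $(d_1,\ldots,d_{n-1},\mathrm{{\bf a}})$ rather than by the nondegeneracy margin $\inf\psi-\sup_{\partial\Gamma}f$, so that the bound remains uniform as the equation approaches degeneracy; the delicate calibration is to tune $\epsilon$ against this gap so that the resulting threshold for $\mathrm{{\bf a}}$ produced by Lemma \ref{yuan's-quantitative-lemma} gives quadratic growth in $\sum_\alpha|a_\alpha|^2$ with a universal constant depending only on $|\underline u|_{C^2(\bar M)}$, $|u|_{C^0(\bar M)}$, and the second-order geometry of $\partial M$.
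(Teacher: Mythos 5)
Your overall plan---use Lemma~\ref{yuan's-quantitative-lemma} to pin down $\lambda(\mathfrak{g})(p_0)$ once $\mathfrak{g}(\xi_n,J\bar\xi_n)(p_0)$ is large and then derive a contradiction from $F(\mathfrak{g})=\psi$ via the cone structure and the boundary relation between $\mathfrak{g}$ and $\underline{\mathfrak{g}}$---is the right one, but two steps have real gaps. First, you postulate a unitary frame in which the tangential block $(\mathfrak{g}(\xi_\alpha,J\bar\xi_\beta))$ and the Levi form are \emph{simultaneously} diagonal, and then read $d_\alpha=\underline d_\alpha+\tau(-\kappa_\alpha)$ as an identity among eigenvalues. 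Those two Hermitian forms generally do not commute, so such a frame does not exist; the true boundary relation $\mathfrak{g}_{\alpha\bar\beta}(p_0)=\underline{\mathfrak{g}}_{\alpha\bar\beta}(p_0)+\eta\,\sigma_{\alpha\bar\beta}(p_0)$ is a matrix identity, and after diagonalizing $\mathfrak{g}_{\alpha\bar\beta}$ the numbers $\underline d_\alpha$ and $-\kappa_\alpha$ must be reinterpreted as diagonal \emph{entries} (controlled via Schur majorization), not as eigenvalues of $\underline{\mathfrak{g}}_{\alpha\bar\beta}$ and of the Levi form. Second, the strict excess you propose to source from $\int f_n$ is not uniformly bounded below: under \eqref{addistruc} alone, $f_n(d,t)\to 0$ as $t\to\infty$ and the integral along the ray stays bounded, so the excess does not stay away from zero as $\mathrm{{\bf a}}\to\infty$, while shrinking $\epsilon$ inflates the threshold for $\mathrm{{\bf a}}$; as written, the calibration is circular.

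The paper avoids both difficulties with a matrix-level decomposition $A(R)=A'(R)+A''(R)$ in the proof of Lemma~\ref{yuan-k2v}: $A'$ carries the subsolution tangential block $\underline{\mathfrak{g}}_{\alpha\bar\beta}-\tfrac{\varepsilon_0}{4}\delta_{\alpha\beta}$, the off-diagonal entries $\mathfrak{g}_{\alpha\bar n}$, and half the corner $R/2$, and $A''$ carries the Levi-form piece $(A_{t_0})_{\alpha\bar\beta}+\tfrac{\varepsilon_0}{4}\delta_{\alpha\beta}$ with the other half. Lemma~\ref{yuan's-quantitative-lemma} is applied to $A'$, so the $\epsilon$-perturbation lands on the \emph{known} eigenvalues $\underline\lambda'$, which lie in a fixed compact subset of $\Gamma_\infty$; hence $\varepsilon_0$ and the compensating increment $R_1$ are chosen once and for all by openness of $\Gamma$ at $(\underline\lambda',R_0)$, with no reliance on $f_n$ along an unbounded ray, and the key inequality $f(\lambda(A(R_c)))\geq f(\lambda(A'(R_c)))\geq f(\underline\lambda)$ then follows directly from the matrix monotonicity \eqref{concavity2}, without ever diagonalizing the solution's tangential block. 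Your steps 3--4 can be repaired along these lines, e.g.\ by writing $\mathrm{diag}(\mathfrak{g}_{\alpha\bar\beta}-\epsilon I,\mathrm{{\bf a}})=\mathrm{diag}(\underline{\mathfrak{g}}_{\alpha\bar\beta}-\epsilon I,R_1)+\mathrm{diag}(\eta\sigma_{\alpha\bar\beta},\mathrm{{\bf a}}-R_1)$ and invoking \eqref{concavity2} together with Schur majorization, which is essentially a reformulation of the paper's $A'+A''$ split.
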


Without any restriction \eqref{bdry-assumption1} to the shape of boundary, 
 we obtain
the following proposition when $f$ satisfies the unbounded condition \eqref{unbounded}.
\begin{proposition}
\label{proposition-quar-yuan2}
Let $(M, J,\omega)$ be a compact Hermitian manifold with $C^3$ boundary,  let $\xi_i$ be as in \eqref{xi-alpha} and \eqref{xi-n}. 
In addition to \eqref{elliptic}-\eqref{addistruc}, 
 \eqref{existenceofsubsolution} and \eqref{nondegenerate}, we assume $f$ satisfies the unbounded condition \eqref{unbounded}.
Then  any \textit{admissible} solution $u\in  C^{2}(\bar M)$ to Dirichlet problem \eqref{mainequ} satisfies 
\begin{equation}
	\begin{aligned}
		\mathfrak{g}(\xi_n, J\bar \xi_n)(p_0)
		\leq C\left(1 +  \sum_{\alpha=1}^{n-1} |\mathfrak{g}(\xi_\alpha, J\bar \xi_n)(p_0)|^2\right),  \quad \forall p_0\in\partial M.\nonumber
	\end{aligned}
\end{equation}
 Here the constant $C$ depends on  $(\delta_{\psi,f})^{-1}$,  $\sup_M\psi$, $|u|_{C^0(\bar M)}$,  
$|\underline{u}|_{C^{2}(\bar M)}$, $\partial M$ up to third order derivatives 
 and other known data
 (but not on $\sup_{M}|\nabla u|$).
 
\end{proposition}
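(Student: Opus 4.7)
The plan is to argue by contradiction at a given boundary point, combining a CNS3-style local barrier with the quantitative spectral Lemma~\ref{yuan's-quantitative-lemma}. Fix $p_0 \in \partial M$ and the frame $\xi_1, \ldots, \xi_n$ from \eqref{xi-alpha}--\eqref{xi-n}, rotating the tangential indices so that $(\mathfrak{g}(\xi_\alpha, J\bar\xi_\beta)(p_0))_{1 \leq \alpha, \beta \leq n-1}$ is diagonal with entries $d_1, \ldots, d_{n-1}$; set $a_\alpha = \mathfrak{g}(\xi_\alpha, J\bar\xi_n)(p_0)$ and $\mathrm{{\bf a}} = \mathfrak{g}(\xi_n, J\bar\xi_n)(p_0)$. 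Since $|\nabla u|_{C^0(\partial M)}$ lies in the permitted dependence list, differentiating $u = \varphi$ tangentially and using the commutator identities with the complex normal bounds $|d_\alpha|$ in terms of the data. If the quadratic growth condition \eqref{guanjian1-yuan} of Lemma~\ref{yuan's-quantitative-lemma} fails for a fixed small $\epsilon > 0$, then $\mathrm{{\bf a}} \leq C(1 + \sum_\alpha |a_\alpha|^2)$ is already immediate; otherwise, Lemma~\ref{yuan's-quantitative-lemma} locates the eigenvalues as
\[
\lambda_\alpha(\mathfrak{g}(p_0)) \in (d_\alpha - \epsilon, d_\alpha + \epsilon), \ 1 \leq \alpha \leq n-1, \qquad \lambda_n(\mathfrak{g}(p_0)) \in [\mathrm{{\bf a}}, \mathrm{{\bf a}} + (n-1)\epsilon),
\]
and the remaining task is to extract a contradiction from this spectral picture as $\mathrm{{\bf a}} \to \infty$.

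To exploit \eqref{unbounded} I need $(d_1, \ldots, d_{n-1})$ to stay in a compact subset $K$ of $\Gamma_\infty$ with a uniform distance to $\partial \Gamma_\infty$. In the absence of \eqref{bdry-assumption1} this is obtained by a local barrier in the spirit of CNS3. Near $p_0$ I consider
\[
w = \underline{u} + t\rho - N\rho^2 + (\text{tangential quadratic correction}),
\]
where $\rho$ is a signed boundary defining function. Using the characterisation $(\mu_1, \ldots, \mu_{n-1}) \in \overline{\Gamma}_\infty \iff (\mu_1, \ldots, \mu_{n-1}, T) \in \overline{\Gamma}$ for all $T$ large, which is afforded by \eqref{unbounded}, together with a small perturbation of $\underline{u}$ securing a strict subsolution gap, the parameters $t, N > 0$ can be tuned so that $w$ is an admissible local subsolution with $w \leq u$ on the relative boundary of $M \cap B_\delta(p_0)$. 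The comparison principle then gives $w \leq u$ in $M \cap B_\delta(p_0)$, so $\partial_\nu(u - \underline{u})(p_0) \leq C$; combined with $u \geq \underline{u}$ (hence $\partial_\nu(u - \underline{u})(p_0) \geq 0$), the Levi form identity produces $|d_\alpha - \underline{d}_\alpha| \leq C$ and places $(d_1, \ldots, d_{n-1}) \in K \subset\subset \Gamma_\infty$ uniformly.

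With the tangential data confined to $K$, the conclusion becomes pure spectral analysis. By the monotonicity \eqref{elliptic},
\[
\psi(p_0) = f(\lambda(\mathfrak{g}(p_0))) \geq f(d_1 - \epsilon, \ldots, d_{n-1} - \epsilon, \mathrm{{\bf a}}),
\]
whose argument lies in $\Gamma$ for $\mathrm{{\bf a}}$ large by the previous step. Then \eqref{unbounded} forces the right side to tend to $\sup_\Gamma f$ as $\mathrm{{\bf a}} \to \infty$, monotonically in $\mathrm{{\bf a}}$ and, by Dini, uniformly for $(d_1, \ldots, d_{n-1}) \in K$. Since $\lambda(\mathfrak{g})(p) \in \Gamma$ is finite at every point of the compact $\bar M$, the continuous function $\psi$ satisfies $\sup_M \psi < \sup_\Gamma f$ strictly; the quantitative gap here encodes the dependence on $(\delta_{\psi,f})^{-1}$ and $\sup_M \psi$ listed in the proposition and upgrades the convergence to an absolute threshold $\mathrm{{\bf a}} \leq T_0$, completing the contradiction. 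Lemma~\ref{lemma3.4} enters through the equivalence \eqref{addistruc}$\Leftrightarrow$\eqref{addistruc-3}, which secures $\sum_i f_i > 0$ on the relevant level set and hence the ellipticity of the linearised operator used in the barrier comparison. The main obstacle is the barrier construction of the second paragraph: without a Levi form hypothesis the tangential comparison $|d_\alpha - \underline{d}_\alpha| \leq C$ must be pried out of \eqref{unbounded} alone, and handling the torsion terms of the Hermitian metric $\omega$ that enter $F(\mathfrak{g}[w])$ is the delicate technical point.
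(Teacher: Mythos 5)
There is a genuine gap, and it sits exactly at the step you flag as the ``main obstacle.''  First, the sign of the barrier comparison is wrong.  If $w$ is a local subsolution with $w\leq u$ on $\partial(M\cap B_\delta(p_0))$ and the comparison principle gives $w\leq u$ inside, then since $w(p_0)=u(p_0)=\varphi(p_0)$ one gets $\partial_\nu(u-w)(p_0)\geq 0$, i.e.\ a \emph{lower} bound for the normal derivative of $u$, not the claimed upper bound $\partial_\nu(u-\underline{u})(p_0)\leq C$.  The upper bound you want is a supersolution statement; it is already provided by the linear supersolution $\check u$ from \eqref{supersolution-1} (Lemma \ref{lemma-c0-bc1}), so the second paragraph of your proof either has the wrong sign or establishes something that is already known before the barrier begins.

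Second, and more seriously, the two-sided bound $0\leq\partial_\nu(u-\underline{u})(p_0)\leq C$ only gives $|d_\alpha-\underline{d}_\alpha|\leq C$, which keeps $(d_1,\ldots,d_{n-1})$ in a bounded set but not in a compact subset $K\subset\subset\Gamma_\infty$ at uniform positive distance from $\partial\Gamma_\infty$.  Since the $d_\alpha$ always belong to $\Gamma_\infty$ (cf.\ \eqref{yuan3-31}) but may drift to $\partial\Gamma_\infty$ as the solution varies, your use of Lemma \ref{yuan's-quantitative-lemma} together with the monotonicity $f(\lambda(\mathfrak g))\geq f(d_1-\epsilon,\ldots,d_{n-1}-\epsilon,\mathbf a)$ is illegitimate: for the right side to be defined one needs $(d_1-\epsilon,\ldots,d_{n-1}-\epsilon)\in\Gamma_\infty$, which fails precisely in the regime you need to exclude, and \eqref{unbounded} does not yield $f\to\sup_\Gamma f$ uniformly when the tangential slot approaches $\partial\Gamma_\infty$.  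The actual proof handles this by decomposing $\mathfrak{g}_{\alpha\bar\beta}=(1-t_0)\underline{\mathfrak{g}}_{\alpha\bar\beta}+(A_{t_0})_{\alpha\bar\beta}$ with $\lambda_{\omega'}(A_{t_0})\in\partial\Gamma_\infty$, reduces the proposition to the inequality $(1-t_0)^{-1}\leq C$ (Lemma \ref{keylemma1-yuan3}), and proves \emph{that} by a CNS3-type barrier built from a supporting hyperplane $\mu$ to $\Gamma_\infty$ at $\lambda_{\omega'}(A_{t_0})$, the functional $\Lambda_\mu$, the perturbation $h=w+\epsilon(|z|^2-x_n/C_2)$, and \cite[Lemma B]{CNS3}.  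This is where the dependence on $(\delta_{\psi,f})^{-1}$ really enters; it is not recoverable from a crude $|d-\underline d|\leq C$.  Your third paragraph (the Dini argument giving an absolute threshold for $\mathbf a$) is sound \emph{provided} the tangential data are confined to a compact $K\subset\subset\Gamma_\infty$, but that confinement is exactly what the paper works to establish and your second paragraph does not supply it.
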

     
    \subsection{Preliminaries}

  Given $p_0\in\partial M$, we can choose a local holomorphic coordinate systems 
  \begin{equation}
 \label{goodcoordinate1}
\begin{aligned}
(z_1,\cdots, z_n), \mbox{  } z_i=x_i+\sqrt{-1}y_i, 
\end{aligned}
\end{equation}
 centered at $p_0$,
 so that $g_{i\bar j}(0)=\delta_{ij}$, $\frac{\partial}{\partial x_n}$ is the inner normal vector at origin, and
   $T^{1,0}_{{p_0},{\partial M}}$ is spanned by $\frac{\partial}{\partial z_\alpha}$ for $1\leq\alpha\leq n-1$.
   For convenience we set
\begin{equation}
\label{t-coordinate}
\begin{aligned}
t_{2k-1}=x_{k}, \ t_{2k}=y_{k},\ 1\leq k\leq n-1;\ t_{2n-1}=y_{n},\ t_{2n}=x_{n}.  
\end{aligned}
\end{equation}
   We also denote $\sigma(z)$ by the distance function from $z$ to $\partial M$ with respect to $\omega$. 
 Near the origin $p_0$, 
 \begin{equation}
 \label{asym-sigma1}
 \begin{aligned}
 \sigma(z)=x_n+\sum_{ i,j=1}^{2n} a_{ij} t_it_j+O(|t|^3).
 \end{aligned}
 \end{equation}

  In the computation we use derivatives with respect to  Chern connection $\nabla$ of $\omega$,
and write
$\partial_{i}=\frac{\partial}{\partial z_{i}}$, 
$\overline{\partial}_{i}=\frac{\partial}{\partial \bar z_{i}}$,
 $\nabla_{i}=\nabla_{\frac{\partial}{\partial z_{i}}}$,
 $\nabla_{\bar i}=\nabla_{\frac{\partial}{\partial \bar z_{i}}}$.
 For   a smooth function $v$,
$$
v_i:=
\partial_i v,  
\mbox{  } v_{\bar i}:=
\partial_{\bar i} v,
\mbox{  } 
v_{i\bar j}:= 
\partial_i\overline{\partial}_j v, 
\mbox{  } 
 v_{ij}:=
 \nabla_{j}\nabla_{i} v =
 \partial_i \partial_j v -\Gamma^k_{ji}v_k, \cdots \mbox{etc},
$$
where $\Gamma_{ij}^k$ are the Christoffel symbols
 defined  by 
$\nabla_{\frac{\partial}{\partial z_i}} \frac{\partial}{\partial z_j}=\Gamma_{ij}^k \frac{\partial}{\partial z_k}.$
 
 Now we derive the $C^0$-estimate, and  boundary gradient  estimates.
 Let $\check{u}$ be the solution to 
 \begin{equation}
 \label{supersolution-1}
     \begin{aligned}
  \,& \Delta \check{u} + \mathrm{tr}_\omega(\chi)=0 \mbox{ in } M,   \,& \check{u}=\varphi \mbox{ on } \partial M.
 \end{aligned}
\end{equation}
The existence of $\check{u}$ follows from standard theory of elliptic equations of second order. Such $\check{u}$ is a supersolution of \eqref{mainequ}.
By the maximum principle and boundary value condition,  one derives
    \begin{equation}
     \label{key-14-yuan3}
     \begin{aligned}
     \underline{u}_{x_n}(0) \leq u_{x_n}(0) \leq \check{u}_{x_n}(0), \mbox{ }  u_\alpha(0)=\underline{u}_\alpha(0), \mbox{  } \underline{u}\leq u\leq \check{u} \mbox{ in } M.
     \end{aligned}
\end{equation}
This simply gives the following lemma.
\begin{lemma}
	\label{lemma-c0-bc1}
There is a uniform positive constant $C$ such that 
\begin{equation}
     \begin{aligned}
   \sup_M |u|+\sup_{\partial M} |\nabla u| \leq C.
     \end{aligned}
\end{equation}
\end{lemma}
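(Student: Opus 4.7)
The proof is essentially immediate from the setup that has already been built: the squeezing $\underline{u}\leq u \leq \check{u}$ in $M$ together with the boundary identification $u=\underline{u}=\check{u}=\varphi$ on $\partial M$. My plan is simply to extract the two estimates from this sandwich.

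\textbf{Step 1 ($C^0$-bound).} Since $\underline{u}$ is a $C^{2,1}$ admissible subsolution and $\check{u}$ solves the linear Dirichlet problem \eqref{supersolution-1}, both are bounded uniformly in $C^0(\bar M)$ by data (for $\check{u}$, by standard Schauder/maximum-principle theory for the Laplacian with smooth coefficients on the compact Hermitian manifold $(M,\omega)$, depending on $|\mathrm{tr}_\omega\chi|_{C^0}$ and $|\varphi|_{C^0(\partial M)}$). The chain $\underline{u}\leq u\leq \check{u}$ in $\bar M$ then gives $\sup_M|u|\leq \max(|\underline{u}|_{C^0(\bar M)},|\check{u}|_{C^0(\bar M)})$.

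\textbf{Step 2 (tangential boundary gradient).} For any vector $\tau$ tangent to $\partial M$, $\tau u = \tau \varphi$ along $\partial M$ because $u=\varphi$ on $\partial M$. This bounds all tangential derivatives by $|\varphi|_{C^1(\partial M)}$.

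\textbf{Step 3 (normal boundary gradient).} Let $\nu$ denote the inner unit normal to $\partial M$. Since $u-\underline{u}\geq 0$ in $M$ with equality on $\partial M$, the one-sided derivative satisfies $(u-\underline{u})_\nu\geq 0$ at each boundary point, i.e.\ $u_\nu\geq \underline{u}_\nu$; symmetrically $u-\check{u}\leq 0$ yields $u_\nu\leq \check{u}_\nu$. Hence
\[
\underline{u}_\nu \;\leq\; u_\nu \;\leq\; \check{u}_\nu \qquad \text{on } \partial M,
\]
which bounds the normal derivative by $|\underline{u}|_{C^1(\bar M)}$ and $|\check{u}|_{C^1(\bar M)}$; the latter is again controlled by the standard boundary gradient estimate for the Poisson equation \eqref{supersolution-1} in terms of $|\varphi|_{C^2}$, $|\mathrm{tr}_\omega\chi|_{C^0}$, and the geometry of $\partial M$. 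Combining Steps 2 and 3 gives $\sup_{\partial M}|\nabla u|\leq C$.

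There is no real obstacle here; the only things worth doing carefully are (i) verifying that $\check{u}$ exists and enjoys a $C^1(\bar M)$-bound that depends only on admissible data (handled by linear elliptic theory on Hermitian manifolds with smooth boundary), and (ii) noting in Step 3 that the inequality between normal derivatives at $\partial M$ follows from the barrier argument/Hopf-type observation applied to $u-\underline{u}$ and $\check{u}-u$, both of which are continuous non-negative functions vanishing on $\partial M$.
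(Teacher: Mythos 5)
Your proof is correct and follows essentially the same route as the paper: construct the linear supersolution $\check u$ from \eqref{supersolution-1}, obtain the sandwich $\underline{u}\leq u\leq\check u$ by the maximum principle (using $\Gamma\subseteq\Gamma_1$ so that $\mathrm{tr}_\omega\mathfrak{g}[u]>0$), read off the $C^0$-bound, and then extract tangential derivatives from $u=\varphi$ on $\partial M$ and the normal-derivative sandwich from the two one-signed functions vanishing on $\partial M$ — this is precisely the content of \eqref{key-14-yuan3}. One minor point: the normal-derivative inequality in your Step 3 is just the first-order necessary condition for $u-\underline u\geq 0$ touching zero at a boundary point, rather than a Hopf-type argument, but the conclusion and its use are the same.
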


 \subsection{First ingredient of the proof}
In the proof the Greek letters  $\alpha,  \beta$
  range from $1$ to $n-1$.
    Let's denote
 \begin{equation}
A(R)=\left(
\begin{matrix}
\mathfrak{g}_{1\bar 1}&\mathfrak{g}_{1\bar 2}&\cdots &\mathfrak{g}_{1\overline{(n-1)}} &\mathfrak{g}_{1\bar n}\\
\mathfrak{g}_{2\bar 1} &\mathfrak{g}_{2\bar 2}&\cdots& \mathfrak{g}_{2\overline{(n-1)}}&\mathfrak{g}_{2\bar n}\\
\vdots&\vdots&\ddots&\vdots&\vdots \\
\mathfrak{g}_{(n-1)\bar 1}&\mathfrak{g}_{(n-1)\bar 2}& \cdots&  \mathfrak{g}_{{(n-1)}\overline{(n-1)}}&
 \mathfrak{g}_{(n-1)\bar n}\\
\mathfrak{g}_{n\bar 1}&\mathfrak{g}_{n\bar 2}&\cdots& \mathfrak{g}_{n \overline{(n-1)}}& R  \nonumber
\end{matrix}
\right).
\end{equation}
 
 Let $\lambda=\lambda(\mathfrak{g})$, $\underline{\lambda}
 =\lambda(\mathfrak{\underline{g}})$,  $\lambda'=\lambda_{\omega'}(\mathfrak{g}_{\alpha\bar\beta})$,  
 $\underline{\lambda}'=\lambda_{\omega'}(\underline{\mathfrak{g}}_{\alpha\bar\beta})$. Here as in Theorem \ref{thm1-2} we denote $\omega^{\prime}=\left.\omega\right|_{T_{\partial M} \cap J T_{\partial M}}$. We know
    \begin{equation}
    \label{yuan3-31}
    \begin{aligned}
    \lambda', \mbox{  } \underline{\lambda}' \in \Gamma_\infty.
    \end{aligned}
    \end{equation}

The boundary value condition implies 
  \begin{equation}
  \label{yuan3-buchong5}
    \begin{aligned}
    u_{\alpha\bar\beta}(0)=\underline{u}_{\alpha\bar\beta}(0)+(u-\underline{u})_{x_n}(0)\sigma_{\alpha\bar\beta}(0).
    \end{aligned}
    \end{equation}
    Let $\eta=(u-\underline{u})_{x_n}(0)$, then at $p_0$ ($z=0$)
     \begin{equation}
    \begin{aligned}
        \mathfrak{g}_{\alpha\bar\beta}=\underline{\mathfrak{g}}_{\alpha\bar\beta}+ \eta\sigma_{\alpha\bar\beta}.
    \end{aligned}
    \end{equation}

         We rewrite 
         $\mathfrak{g}_{\alpha\bar\beta}$ as 
    \begin{equation}
    \begin{aligned}
        \mathfrak{g}_{\alpha\bar\beta}= (1-t)\underline{\mathfrak{g}}_{\alpha\bar\beta}
        +\left(t\underline{\mathfrak{g}}_{\alpha\bar\beta}+\eta\sigma_{\alpha\bar\beta}\right).
    \end{aligned}
    \end{equation}
For simplicity, 
 we denote
     \begin{equation}
     	\label{A_t}
    \begin{aligned}
    A_t=\sqrt{-1} \left[t\underline{\mathfrak{g}}_{\alpha\bar\beta}+\eta\sigma_{\alpha\bar\beta}\right]dz_\alpha\wedge d\bar z_\beta.
    \end{aligned}
    \end{equation}
    Clearly, $(A_{1})_{\alpha\bar\beta}=\mathfrak{g}_{\alpha\bar\beta}$ so  $\lambda_{\omega'}(A_1)\in\Gamma_\infty$. 
    
    \begin{lemma}
    \label{yuan-k2v}
    Suppose there are constants $t_0<1$ and $R_0>0$  such that 
    \begin{equation}
    	\label{yuanrr-412}
\begin{aligned}
\lambda_{\omega'}(A_{t_0})\in \overline{\Gamma}_\infty,
    \end{aligned}
\end{equation}
     \begin{equation}   \label{key-03-yuan3v}\begin{aligned}
 f\left((1-t_0)\underline{\lambda}'_{1},\cdots, (1-t_0)\underline{\lambda}'_{n-1}, R_0\right)\geq f(\underline{\lambda}).
\end{aligned}\end{equation}
 Then there is a uniform positive constant $C$ depending on  $(1-t_0)^{-1}$, $|t_0|$, $R_0$ and other known data such that 
   \begin{equation}
   \begin{aligned}
   \mathfrak{g}_{n\bar n}\leq C\left(1+\sum_{\alpha=1}^{n-1}|\mathfrak{g}_{\alpha\bar n}|^2\right).  \nonumber
   \end{aligned}
   \end{equation}
    \end{lemma}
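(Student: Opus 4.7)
The plan is to argue by contradiction, combining the decomposition $\mathfrak{g}_{\alpha\bar\beta}=(1-t_0)\underline{\mathfrak{g}}_{\alpha\bar\beta}+(A_{t_0})_{\alpha\bar\beta}$ with the quantitative eigenvalue clustering of Lemma \ref{yuan's-quantitative-lemma}. Suppose on the contrary that $\mathfrak{g}_{n\bar n}(p_0)$ can be made arbitrarily large relative to $1+\sum_{\alpha=1}^{n-1}|\mathfrak{g}_{\alpha\bar n}(p_0)|^{2}$.

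At $p_0$ I would split $\mathfrak{g}=\tilde B+D$ with $\tilde B$ block-diagonal, having top-left $(n-1)\times(n-1)$ block $(1-t_0)\underline{\mathfrak{g}}_{\alpha\bar\beta}$ and bottom-right entry $R_0$. Then $D$ has top-left block $(A_{t_0})_{\alpha\bar\beta}$, bottom-right entry $\mathfrak{g}_{n\bar n}-R_0$, and inherits the tangential-normal entries $\mathfrak{g}_{\alpha\bar n}$. By hypothesis \eqref{key-03-yuan3v} and the subsolution property, $\lambda(\tilde B)=\bigl((1-t_0)\underline{\lambda}',R_0\bigr)\in\Gamma$ with $F(\tilde B)\geq f(\underline{\lambda})\geq \psi$.

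After a unitary rotation of $\xi_1,\ldots,\xi_{n-1}$ diagonalizing $(A_{t_0})_{\alpha\bar\beta}$ with eigenvalues $\nu=(\nu_1,\ldots,\nu_{n-1})\in\overline{\Gamma}_\infty$ (hypothesis \eqref{yuanrr-412}), I would apply Lemma \ref{yuan's-quantitative-lemma} to $D$ with $d_\alpha=\nu_\alpha$, $a_\alpha=\mathfrak{g}_{\alpha\bar n}$, and $\mathrm{{\bf a}}=\mathfrak{g}_{n\bar n}-R_0$. Fixing a small $\epsilon>0$, condition \eqref{guanjian1-yuan} supplies exactly the threshold in the quadratic bound we are trying to prove; under it, $\lambda(D)$ clusters inside $\epsilon$-neighborhoods of $(\nu_1,\ldots,\nu_{n-1},\mathfrak{g}_{n\bar n}-R_0)$. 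Since $\nu\in\overline{\Gamma}_\infty$ there is $L_0$ with $(\nu,L_0)\in\overline{\Gamma}$, and the cone identity $\overline{\Gamma}+\overline{\Gamma}_n\subset\overline{\Gamma}$ (from $\Gamma$ being a convex cone containing $\Gamma_n$) yields $(\nu,\mathfrak{g}_{n\bar n}-R_0)\in\overline{\Gamma}$ once $\mathfrak{g}_{n\bar n}-R_0\geq L_0$; absorbing the $\epsilon$-error into a small shift then gives $\lambda(D)\in\overline{\Gamma}$.

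To extract a genuine contradiction I would exploit the strict monotonicity \eqref{elliptic} through a perturbation: rewrite $\mathfrak{g}=(\tilde B+\delta E_{nn})+(D-\delta E_{nn})$ for small $\delta>0$, with $E_{nn}$ the Hermitian matrix whose only nonzero entry is $1$ in position $(n,n)$. Then $\lambda(\tilde B+\delta E_{nn})\in\Gamma$ and $F(\tilde B+\delta E_{nn})>F(\tilde B)$ by \eqref{elliptic}, while $\lambda(D-\delta E_{nn})\in\overline{\Gamma}$ remains valid for $\mathfrak{g}_{n\bar n}$ large. The nonstrict version of \eqref{key-01-yuan3}, obtained by continuous extension to $\lambda(B)\in\overline{\Gamma}$, then gives $F(\mathfrak{g})\geq F(\tilde B+\delta E_{nn})$, producing
\[
\psi=F(\mathfrak{g})\;\geq\; F(\tilde B+\delta E_{nn})\;>\;F(\tilde B)\;\geq\; f(\underline{\lambda})\;\geq\;\psi,
\]
the desired contradiction. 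The main obstacle I foresee is the third paragraph: the hypothesis only places $\nu\in\overline{\Gamma}_\infty$, possibly on $\partial\Gamma_\infty$, so one must combine the quantitative clustering of Lemma \ref{yuan's-quantitative-lemma} with the cone inclusion $\overline{\Gamma}+\overline{\Gamma}_n\subset\overline{\Gamma}$ and a perturbative argument to locate $\lambda(D)$ in $\overline{\Gamma}$. The small-shift trick in the final step is designed precisely to convert that nonstrict containment into a strict increase of $F$.
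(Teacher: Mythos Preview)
Your overall strategy---decompose $\mathfrak{g}$ into two pieces, apply Lemma~\ref{yuan's-quantitative-lemma} to the piece carrying the off-diagonal entries, and then invoke the monotonicity \eqref{concavity2}---matches the paper's, and the contradiction framing is equivalent to the paper's direct construction of an explicit threshold $R_c$.

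There is, however, a genuine gap precisely where you flag it. After applying Lemma~\ref{yuan's-quantitative-lemma} to $D$ you only obtain $\lambda_\alpha(D) > \nu_\alpha - \epsilon$. When $\nu \in \partial\Gamma_\infty$---which hypothesis \eqref{yuanrr-412} certainly allows---the point $(\nu_1 - \epsilon, \ldots, \nu_{n-1} - \epsilon)$ can lie \emph{outside} $\overline{\Gamma}_\infty$ for every $\epsilon > 0$ (take $\Gamma = \Gamma_n$ and $\nu$ with a zero coordinate), and then no choice of $n$-th coordinate places $\lambda(D)$ in $\overline{\Gamma}$. Your shift by $\delta E_{nn}$ moves only the $n$-th eigenvalue and does nothing to repair the tangential ones, so it cannot close this gap; it addresses the separate issue of turning a nonstrict inequality into a strict one, not the containment $\lambda(D)\in\overline{\Gamma}$.

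The paper's remedy is to transfer slack from the block that has it. Since $\underline{u}$ is admissible, $\underline{\lambda}' \in \Gamma_\infty$ strictly, so by openness one may choose $\varepsilon_0 > 0$ and $R_1$ with
\[
f\bigl((1-t_0)(\underline{\lambda}'_1 - \tfrac{\varepsilon_0}{2}), \ldots, (1-t_0)(\underline{\lambda}'_{n-1} - \tfrac{\varepsilon_0}{2}),\, R_1\bigr) \geq f(\underline{\lambda}).
\]
One then subtracts $\tfrac{(1-t_0)\varepsilon_0}{4}\delta_{\alpha\beta}$ from the $\underline{\mathfrak g}_{\alpha\bar\beta}$ block and adds it to the $(A_{t_0})_{\alpha\bar\beta}$ block, pushing the latter's eigenvalues strictly into $\Gamma_\infty$. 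The paper in fact places the off-diagonal entries $\mathfrak{g}_{\alpha\bar n}$ with the $\underline{\mathfrak g}$ piece and applies the clustering lemma there, so that the $\epsilon$-error lands on eigenvalues already safely interior; the other block is then block-diagonal and its eigenvalues are read off exactly. With this $\varepsilon_0$-transfer in place your argument goes through; without it, the step ``$\lambda(D) \in \overline{\Gamma}$'' is unjustified.
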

    
   \begin{proof}
   	 It follows from \eqref{key-03-yuan3v} and the openness of $\Gamma$  that 
   	 there are two uniform positive constants $R_1$   and $\varepsilon_0$   such that
   	\begin{equation}
   		\label{key-03-yuan3}
   		\begin{aligned}
   			f\left((1-t_0)(\underline{\lambda}'_{1}-{\varepsilon_0}/{2}),\cdots, (1-t_0)(\underline{\lambda}'_{n-1}
   			-{\varepsilon_0}/{2}), R_1\right)\geq f(\underline{\lambda}),
   		\end{aligned}
   	\end{equation}
   	\begin{equation}
   		\label{key-002-yuan3}
   		\begin{aligned}
   			\left(\underline{\lambda}'_1-\varepsilon_{0}, \cdots, \underline{\lambda}'_{n-1}-\varepsilon_0, {R_1}/{(1-t_0)}\right)\in \Gamma,
   		\end{aligned}
   	\end{equation}
   	where $R_1$ depends on  $R_0$, $\underline{\lambda}'$, and possibly on $(1-t_0)^{-1}$, 
   	and $\varepsilon_0$ depends on 
   	$\inf_{\partial M}\mathrm{dist}(\underline{\lambda},\partial \Gamma)$, $\underline{\lambda}'$ and other known data.
   	
   	Let $$ {A}(R) =
   	 \left( \begin{matrix}
   	 \mathfrak{{g}}_{\alpha\bar \beta} &\mathfrak{g}_{\alpha\bar n}\\
   		\mathfrak{g}_{n\bar \beta}& R  \nonumber
   	 \end{matrix}\right).$$  
    By \eqref{A_t} we can decompose $A(R)$ into
   \begin{equation}
   	\begin{aligned}
 {A}(R) 
= A'(R)+A''(R) 
\end{aligned}
\end{equation}
where 
 \[A'(R)=\left(\begin{matrix}
	(1-t_0)(\mathfrak{\underline{g}}_{\alpha\bar \beta}-\frac{\varepsilon_0}{4}\delta_{\alpha\beta}) &\mathfrak{g}_{\alpha\bar n}\\
	\mathfrak{g}_{n\bar \beta}& R/2  \nonumber
\end{matrix}\right), \quad
A''(R)=\left(\begin{matrix} (A_{t_0})_{\alpha\bar \beta}+\frac{(1-t_0)\varepsilon_0}{4} \delta_{\alpha\beta} &0\\ 0& R/2  \end{matrix}\right).\]

   Denote
   \begin{equation}
   	\label{yuan3-buchong3}
   	\lambda_{\omega'}(A_{t_0}):=
   	\tilde{\lambda}'=(\tilde{\lambda}_1',\cdots,\tilde{\lambda}_{n-1}').
   \end{equation}
   By \eqref{key-14-yuan3} there is a uniform constant $C_0>0$ depending on $|t_0|$,
   $\sup_{\partial M}|\nabla u|$ and other known data, such that $|\tilde{\lambda}'|\leq C_0$, that is
   $\tilde{\lambda'}$ is contained in a compact subset of $\overline{\Gamma}_\infty$, i.e.  
   \begin{equation}
   	\label{yuan3-buchong4v}
   	\begin{aligned}
   		\tilde{\lambda}'\in {K}:=\{\lambda'\in \overline{\Gamma}_\infty: |\lambda'|\leq C_0\}.  \nonumber
   	\end{aligned}
   \end{equation}
   Combining with \eqref{yuanrr-412} there is a 
   uniform positive constant $R_2$ depending on $((1-t_0)\varepsilon_0)^{-1}$, $K$ and other known data, 
   such that  
   \begin{equation}
   	\label{at0-1}
   	\begin{aligned}
\lambda(A''(R))\in\Gamma, \mbox{ } \forall R>R_2.
\end{aligned}
   \end{equation}

Let's pick  $\epsilon=\frac{(1-t_0)\varepsilon_0}{4}$ in 
 Lemma  \ref{yuan's-quantitative-lemma}, and we set
\begin{equation}
\begin{aligned}
R_c
= \,& \frac{8(2n-3)}{(1-t_0)\varepsilon_0}\sum_{\alpha=1}^{n-1} | \mathfrak{g}_{\alpha\bar n}|^2 
  +\frac{2(2n^2-4n+1)(1-t_0)\varepsilon_0}{4(2n-3)}
  + 2(n-1)  |{\underline{\lambda}'}|
  +2R_1+2R_2  \nonumber
\end{aligned}
\end{equation}
where $\varepsilon_0$, $R_1$ and $R_2$ are fixed constants as we have chosen above.

According to Lemma \ref{yuan's-quantitative-lemma},
 the eigenvalues $\lambda({A}'(R_c))$ of ${A}'(R_c)$
   (possibly with an appropriate order)
 shall behave like
\begin{equation}
\label{lemma12-yuan}
\begin{aligned}
\,& \lambda_{\alpha}({A}'(R_c))\geq (1-t_0)(\underline{\lambda}'_{\alpha}-\frac{\varepsilon_0}{2}), \mbox{  } 1\leq \alpha\leq n-1, \\\,&
\lambda_{n}({A}'(R_c))\geq R_c/2-(n-1)(1-t_0)\varepsilon_0/4.
\end{aligned}
\end{equation}
In particular, $\lambda({A}'(R_c))\in \Gamma$. So $\lambda(A(R_c))\in \Gamma$. 
 
 Next we will use Lemma \ref{lemma3.4}. 
 Precisely, together with \eqref{at0-1},
 \eqref{concavity2} gives
\begin{equation}
\label{yuan-k1}
    \begin{aligned}
    f(\lambda(A(R_c)))\geq f(\lambda(A'(R_c))). 
    \end{aligned}
    \end{equation}
 From \eqref{key-03-yuan3}, \eqref{lemma12-yuan} and \eqref{yuan-k1}, we deduce   
 \begin{equation}
   	\label{yuan-k2}
   	\begin{aligned}
   		\mathfrak{g}_{n\bar n} \leq R_c.   \nonumber
   	\end{aligned}
   \end{equation}
 
\end{proof}

\subsection{Second ingredient of the proof}

 In order to prove Propositions \ref{proposition-quar-yuan1} and \ref{proposition-quar-yuan2}, according to Lemma \ref{yuan-k2v}, 
 it requires only to complete the following two steps:
 \begin{itemize}
 	\item Confirm the assumptions imposed in Lemma \ref{yuan-k2v}.
 	\item  Prove that $(1-t_0)^{-1}$ can be uniformly
 	 bounded from above, i.e., 
 	\begin{equation}
 		\label{bound-t0}
 		\begin{aligned}
 			(1-t_0)^{-1}\leq C. 
 		\end{aligned}
 	\end{equation}
 \end{itemize}   

\subsubsection*{\bf Case 1: $\partial M$ satisfies \eqref{bdry-assumption1}.}
Note that $\eta=(u-\underline{u})_{x_n}(0)\geq 0$.
The assumption  \eqref{bdry-assumption1} yields 
$\lambda_{\omega'}(\eta\sigma_{\alpha\bar\beta})\in \overline{\Gamma}_\infty$ on $\partial M$. 
Since $\underline{u}$ is a subsolution, one can choose a constant $R$ sufficiently large, such that
\begin{equation}
	\begin{aligned}
		f(\underline{\lambda}',\mathfrak{\underline{g}}_{n\bar n})\geq f(\underline{\lambda}). \nonumber
	\end{aligned}
\end{equation}
 Here we use \eqref{concavity1}. 
So the $t_0$ in  Lemma \ref{yuan-k2v} exists and
  $$t_0=0.$$
Thus \eqref{bound-t0} automatically holds. 
 Consequently, we can deduce Proposition \ref{proposition-quar-yuan1}.

\subsubsection*{\bf Case 2: $f$ satisfies the unbounded condition \eqref{unbounded}.}

This case corresponds exactly to Proposition \ref{proposition-quar-yuan2}.
 We always assume $\Gamma$ is of type 1 in the sense of Caffarelli-Nirenberg-Spruck \cite{CNS3}, then $\Gamma_\infty$ is an open symmetric convex cone in $\mathbb{R}^{n-1}$ and $\Gamma_\infty\neq\mathbb{R}^{n-1}$;
 otherwise, $\Gamma_\infty=\mathbb{R}^{n-1}$, then we have done as shown in Case 1. 
 
 We first confirm the assumptions of Lemma  \ref{yuan-k2v}.
 \begin{itemize}
 \item  
Let $t_0$ be the first 
  $t$ as we decrease $t$ from $1$ 
  such that
      \begin{equation}
      \label{key0-yuan3}
    \begin{aligned}
    \lambda_{\omega'}(A_{t_0})\in\partial\Gamma_\infty.
    \end{aligned}
    \end{equation}
 Such $t_0$ exists, since 
  $\lambda_{\omega'}(A_1)\in\Gamma_\infty$ and
  $\lambda_{\omega'}(A_t)\in \mathbb{R}^{n-1}\setminus\Gamma_\infty$ for $t\ll -1$. Furthermore, for a uniform positive constant $T_0$ under control,
    \begin{equation}
    \label{1-yuan3}
    \begin{aligned}
   -T_0< t_0<1.
    \end{aligned}
    \end{equation}
    
 \item  By the unbounded condition \eqref{unbounded}
    there is a uniform positive constant $R_1$ depending on $(1-t_0)^{-1}$, 
    $\varepsilon_0$ and $\underline{\lambda}'$
    such that \eqref{key-03-yuan3} holds.
Here is the only place where we use the unbounded condition \eqref{unbounded}. 
As a contrast,  such an unbounded condition  can be removed when $t_0=0$ as in Case 1.

  \end{itemize}
    
   To complete the proof of Proposition \ref{proposition-quar-yuan2}, it requires only to prove the following lemma.
    \begin{lemma}
    \label{keylemma1-yuan3}
    Let $t_0$ be as defined in \eqref{key0-yuan3}, then 
    	\begin{equation}
    	\begin{aligned}
    		(1-t_0)^{-1}\leq C  \nonumber
    	\end{aligned}
    \end{equation}
where $C$ is a uniform positive constant depending on 
    $|u|_{C^0(M)}$, $|\nabla u|_{C^0(\partial M)}$, $|\underline{u}|_{C^2(M)}$, $\sup_M\psi$,
    $(\delta_{\psi,f})^{-1}$, $\partial M$ up to third derivatives and other known data.
      
\end{lemma}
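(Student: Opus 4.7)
\textbf{Proof proposal for Lemma \ref{keylemma1-yuan3}.}

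The plan is to argue by contradiction and quantify how the non-degeneracy $\psi(p_0) \geq \sup_{\partial\Gamma} f + \delta_{\psi,f}$ forces $t_0$ to stay bounded away from $1$. Suppose $(1-t_0)$ were much smaller than any prescribed $\varepsilon > 0$. Since
\[ A_1 - A_{t_0} = (1-t_0)\,\underline{\mathfrak{g}}_{\alpha\bar\beta}(0) \]
and $A_1 = \mathfrak{g}_{\alpha\bar\beta}(0)$ at $p_0$ by \eqref{yuan3-buchong5}, Lipschitz dependence of eigenvalues on matrix entries combined with $|\underline{\mathfrak{g}}|_{C^0} \leq C|\underline{u}|_{C^2}$ gives
\[ \mathrm{dist}\bigl(\lambda_{\omega'}(\mathfrak{g}_{\alpha\bar\beta}(0)),\,\partial\Gamma_\infty\bigr) \leq C(1-t_0). \]
Thus the tangential eigenvalues of $\mathfrak{g}(0)$ are forced into an arbitrarily small neighborhood of $\partial\Gamma_\infty$.

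The main construction is an auxiliary Hermitian matrix $\tilde{B}(R)$ obtained by inserting $A_{t_0+\delta}$ (with $\delta = (1-t_0)/2$, so $\lambda_{\omega'}(A_{t_0+\delta}) \in \Gamma_\infty$) into the tangential block, keeping the mixed entries $\mathfrak{g}_{\alpha\bar n}(0)$, and placing a large parameter $R$ at the $(n,n)$ slot. Lemma \ref{yuan's-quantitative-lemma} applied with a small spectral radius $\epsilon$ shows that once $R$ exceeds the quadratic bound \eqref{guanjian1-yuan}, the spectrum of $\tilde{B}(R)$ decomposes into $n-1$ eigenvalues $\epsilon$-close to $\lambda_{\omega'}(A_{t_0+\delta})$ and one eigenvalue close to $R$, so $\tilde{B}(R)$ is admissible. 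The unbounded condition \eqref{unbounded} then guarantees that for some choice of $R$ depending on $(1-t_0)^{-1}$, $\underline{\lambda}'$, $\varepsilon$ and $\sup_M\psi$, we have $f(\lambda(\tilde{B}(R))) \geq \sup_M \psi + 1$, or more precisely, $f$ of this matrix is driven close to $\sup_\Gamma f$.

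Next, apply concavity of $F$ at the base point $\mathfrak{g}(0)$ against the test matrix $\tilde{B}(R)$. The difference $\tilde{B}(R) - \mathfrak{g}(0)$ has tangential part $-(1-t_0-\delta)\underline{\mathfrak{g}}_{\alpha\bar\beta}(0)$, zero mixed entries, and $(R - \mathfrak{g}_{n\bar n}(0))$ in the $(n,n)$ slot. Choosing $R \leq \mathfrak{g}_{n\bar n}(0)$ (if the latter is already large, else directly invoke Lemma \ref{yuan-k2v} with $t_0$ replaced by $t_0+\delta$) makes the $(n,n)$ term non-positive thanks to $F^{n\bar n}(\mathfrak{g}(0)) > 0$. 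Using \eqref{addistruc-4} from Lemma \ref{lemma3.4} together with the subsolution inequality to control $F^{\alpha\bar\beta}(\mathfrak{g}(0))\underline{\mathfrak{g}}_{\alpha\bar\beta}$ in terms of $\psi(p_0) - F(\underline{\mathfrak g}(0)) + C$, the concavity inequality collapses to
\[ f(\lambda(\tilde{B}(R))) \leq \psi(p_0) + C\,(1-t_0). \]
Combined with the lower bound on $f(\lambda(\tilde{B}(R)))$ produced by the unbounded condition at the chosen $R$, this will produce $\psi(p_0) + C(1-t_0) \geq \sup_M \psi + 1$, which is impossible for $(1-t_0)$ small, giving the desired lower bound $(1-t_0) \geq 1/C$ with $C$ depending on the listed data.

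The main obstacle will be the matching of the two $R$'s: the $R$ needed to make $f(\lambda(\tilde{B}(R)))$ large via \eqref{unbounded} may a priori depend on $(1-t_0)^{-1}$ (since $\lambda_{\omega'}(A_{t_0+\delta})$ degenerates toward $\partial\Gamma_\infty$), while the $R$ appearing in the concavity step wants to be at most $\mathfrak{g}_{n\bar n}(0)$. A careful choice together with a rearrangement exploiting that the tangential trace term $F^{\alpha\bar\beta}(\mathfrak{g}(0))\underline{\mathfrak{g}}_{\alpha\bar\beta}$ stays bounded (independent of $\sup_M|\nabla u|$) because of the concavity inequality $F^{\alpha\bar\beta}(\mathfrak{g})(\underline{\mathfrak{g}} - \mathfrak{g})_{\alpha\bar\beta} \leq \sup_M\psi - \inf_M\psi + F^{n\bar n}(\mathfrak{g})(\mathfrak{g} - \underline{\mathfrak{g}})_{n\bar n}$ is what makes the scheme close. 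Only the non-degeneracy gap $\delta_{\psi,f}$ and the control on $\underline{\lambda}'$ from $|\underline{u}|_{C^2}$ enter the final constant, matching the statement of the lemma.
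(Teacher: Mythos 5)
Your approach is genuinely different from the paper's: you try to run a pointwise algebraic argument at the boundary point, constructing an auxiliary admissible test matrix $\tilde{B}(R)$ and pairing it against $\mathfrak{g}(p_0)$ via the concavity inequality, while the paper instead constructs a local barrier $w$ (then $h=w+\epsilon(|z|^2-x_n/C_2)$) using a supporting hyperplane to $\Gamma_\infty$ at $\tilde\lambda'$ as in \eqref{key-18-yuan3}, proves $\Lambda_\mu(\mathfrak g[w])\le 0$ in $\Omega_\delta$, invokes \cite[Lemma~B]{CNS3} to get $u\le h$, and finally compares normal derivatives at the origin. That PDE--comparison route sidesteps the two obstructions that your scheme runs into, and I do not see how your scheme closes.

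The first obstruction is the one you flag yourself: the $R$ that the unbounded condition \eqref{unbounded} requires to drive $f(\lambda(\tilde B(R)))$ above $\sup_M\psi+1$ scales like $(1-t_0)^{-1}$ (since the spectral cushion of $A_{t_0+\delta}$ around $\partial\Gamma_\infty$ shrinks linearly in $1-t_0$, forcing the $\epsilon$ in Lemma \ref{yuan's-quantitative-lemma} down, and \eqref{guanjian1-yuan} scales like $1/\epsilon$), yet the concavity step requires $R\le\mathfrak g_{n\bar n}(p_0)$, an a~priori unknown quantity. Your fallback of ``directly invoking Lemma \ref{yuan-k2v} with $t_0+\delta$'' does not rescue the argument: the constant in Lemma \ref{yuan-k2v} still depends on $(1-(t_0+\delta))^{-1}\sim(1-t_0)^{-1}$ and on the $R_0$ from \eqref{key-03-yuan3v}, which again depends on $(1-t_0)^{-1}$ by the same spectral degeneration, so the conclusion ``$(1-t_0)^{-1}\le C$'' is not actually produced. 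The second obstruction is the tangential trace term: $F^{\alpha\bar\beta}(\mathfrak g(p_0))\underline{\mathfrak g}_{\alpha\bar\beta}$ is only bounded by $C\sum_i f_i(\lambda(\mathfrak g))$, and $\sum_i f_i$ has no a~priori upper bound (only the lower bound \eqref{sumfi1}). The ``rearrangement'' you allude to, using $F^{\alpha\bar\beta}(\underline{\mathfrak g}-\mathfrak g)_{\alpha\bar\beta}\le\dots+F^{n\bar n}(\mathfrak g-\underline{\mathfrak g})_{n\bar n}$, moves the unboundedness into the term $F^{n\bar n}(\mathfrak g-\underline{\mathfrak g})_{n\bar n}$ rather than eliminating it, and in any case bounds a difference rather than $F^{\alpha\bar\beta}\underline{\mathfrak g}_{\alpha\bar\beta}$ itself. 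The paper's barrier/comparison argument does not need $\sum_i f_i$ to be bounded --- the maximum principle step replaces the pointwise algebraic inequality, and the normal-derivative comparison then yields $(1-t_0)^{-1}\le C$ cleanly.
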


We assume $\eta>0$ (otherwise we have done).  
In the proof we shall make use of some idea of Caffarelli-Nirenberg-Spruck \cite{CNS3}, which was used by Li \cite{LiSY2004} 
to study the Dirichlet problem in $\mathbb{C}^n$.
We use some notation of \cite{CNS3,LiSY2004}. 
   Without loss of generality, we assume $$t_0>\frac{1}{2} \mbox{ and } \tilde{\lambda}_1'\leq \cdots \leq \tilde{\lambda}_{n-1}'$$ 
   where $\tilde{\lambda}'$ is as we denoted in \eqref{yuan3-buchong3}.
   Combining \eqref{1-yuan3} with \eqref{key-14-yuan3}, we can deduce that there is a uniform constant $C_0'>0$ depending on 
   $\sup_{\partial M}|\nabla u|$ and other known data, such that $|\tilde{\lambda}'|\leq C_0'$, 
    i.e.  
 \begin{equation}
 \label{yuan3-buchong4}
\begin{aligned}
\tilde{\lambda}'\in {K}':=\{\lambda'\in\partial\Gamma_\infty: |\lambda'|\leq C_0'\}.
\end{aligned}
\end{equation}

    It was proved in \cite[Lemma 6.1]{CNS3} that for $\tilde{\lambda}'\in\partial\Gamma_\infty$
     there is a supporting plane for
     $\Gamma_\infty$ and one can choose $\mu_j$ with 
   $\mu_1\geq \cdots\geq \mu_{n-1}\geq0$ so that
    \begin{equation}
    \label{key-18-yuan3}
    \begin{aligned}
    \Gamma_\infty\subset \left\{\lambda'\in\mathbb{R}^{n-1}: \sum_{\alpha=1}^{n-1}\mu_\alpha\lambda'_\alpha>0 \right\}, \mbox{  }
  \mbox{  } \sum_{\alpha=1}^{n-1} \mu_\alpha=1, \mbox{  } \sum_{\alpha=1}^{n-1}\mu_\alpha \tilde{\lambda}_\alpha'=0.
    \end{aligned}
    \end{equation} 
  According to \cite[Lemma 6.2]{CNS3}
(without loss of generality we assume $\underline{\lambda}_1'\leq \cdots\leq\underline{\lambda}_{n-1}'$),
   \begin{equation}
    \begin{aligned}
    \sum_{\alpha=1}^{n-1} \mu_\alpha \underline{\mathfrak{g}}_{\alpha\bar\alpha}\geq \sum_{\alpha=1}^{n-1}\mu_\alpha \underline{\lambda}'_\alpha\geq \inf_{p\in\partial M}\sum_{\alpha=1}^{n-1} \mu_\alpha \underline{\lambda}'_\alpha(p)\geq a_0>0.
    \end{aligned}
    \end{equation}
    Here we use \eqref{yuan3-31}, \eqref{yuan3-buchong4} and \eqref{key-18-yuan3}. We shall mention that $a_0$ depends on $\mathrm{disc}(\underline{\lambda},\partial \Gamma)$. 
   Without loss of generality,  we assume $({A_{t_0}})_{\alpha\bar\beta}=t_0\underline{\mathfrak{g}}_{\alpha\bar\beta}+\eta\sigma_{\alpha\bar\beta}$ is diagonal at $p_0$. From \eqref{key-18-yuan3} one has at the origin
     \begin{equation}
    \begin{aligned}
  0=  t_0 \sum_{\alpha=1}^{n-1}\mu_\alpha\underline{\mathfrak{g}}_{\alpha\bar\alpha}+\eta\sum_{\alpha=1}^{n-1}\mu_\alpha  {\sigma}_{\alpha\bar\alpha} 
  \geq a_0t_0 +\eta \sum_{\alpha=1}^{n-1} \mu_\alpha \sigma_{\alpha\bar\alpha}.
    \end{aligned}
    \end{equation}
 Together with \eqref{key-14-yuan3}, we see at the origin $\{z=0\}$
    \begin{equation}
    \label{key-1-yuan3}
    \begin{aligned}
    -\sum_{\alpha=1}^{n-1} \mu_\alpha \sigma_{\alpha\bar\alpha}\geq \frac{a_0 t_0}{\sup_{\partial M}|\nabla (\check{u}-\underline{u})|}=:a_1>0,
    \end{aligned}
    \end{equation}
    where $\check{u}$ and $\underline{u}$ are respectively supersolution and subsolution.
         Let $$\Omega_\delta=M\cap B_{\delta}(0),$$
where $B_\delta(0)=\{z\in M: |z|<\delta\}$.
On $\Omega_\delta$, we let
\begin{equation}
    \begin{aligned}
    d(z)=\sigma(z)+\tau |z|^2
    \end{aligned}
    \end{equation}
    where $\tau$ is a positive constant 
     to be determined; and let 
    \begin{equation}
     \label{w-buchong1}
    \begin{aligned}
    w(z)=\underline{u}(z)+({\eta}/{t_0})\sigma(z)+l(z)\sigma(z)+Ad(z)^2,
    \end{aligned}
    \end{equation}
    where $l(z)=\sum_{i=1}^n (l_iz_i+\bar l_{i}\bar z_{i})$, 
    $l_i\in \mathbb{C}$, $\bar l_i=l_{\bar i}$,
     to be chosen as in \eqref{chosen-1}, and $A$ is a positive constant to be determined.
       Furthermore, on $\partial M\cap\bar\Omega_\delta$, $u(z)-w(z)=-A\tau^2|z|^4$.
    On $M\cap\partial B_{\delta}(0)$,
    \begin{equation}
    \begin{aligned}
    u(z)-w(z) 
    \leq \,& |u-\underline{u}|_{C^0(\Omega_\delta)} 
    -(2A\tau \delta^2+\frac{\eta}{t_0}-2n \sup_{i}|l_i| \delta)\sigma(z)-A\tau^2\delta^4 \\
    \leq \,& -\frac{A\tau^2 \delta^4}{2} \nonumber
    \end{aligned}
    \end{equation}
 provided $A\gg1$. 

Let $T_1(z),\cdots, T_{n-1}(z)$ be an orthonormal basis for holomorphic tangent space  of level hypersurface
 $\{w: d(w)=d(z)\}$ at $z$, so that at the origin 
 $T_\alpha(0)= \frac{\partial }{\partial z_\alpha}$  for each $1\leq\alpha\leq n-1$.

  Such a basis exists: 
  We see at the origin  $\partial d(0)=\partial \sigma(0)$.  
  Thus for $1\leq \alpha\leq n-1$, we can choose $T_\alpha$ such that at the  origin
  $T_\alpha(0)= \frac{\partial }{\partial z_\alpha}$.

 By   \cite[Lemma 6.2]{CNS3}, we have the following lemma.
\begin{lemma}
\label{lemma-yuan3-buchong1}
Let $T_1(z),\cdots, T_{n-1}(z)$ be as above, and  let  $T_n=\frac{\partial d}{|\partial d|}$.
For a real $(1,1)$-form $\Theta=\sqrt{-1}\Theta_{i\bar j}dz_i\wedge d\bar z_j$,
we denote by $\lambda(\Theta)=(\lambda_1(\Theta),\cdots,\lambda_n(\Theta))$  the eigenvalues of $\Theta$ (with respect to $\omega$) with $\lambda_1(\Theta)\leq \cdots\leq \lambda_n(\Theta)$. Then for any 
$\mu_1\geq\cdots\geq\mu_n$,
\[\sum_{i=1}^n \mu_i \lambda_{i}(\Theta)\leq \sum_{i=1}^n\mu_i\Theta(T_i,J\bar T_i).\]
\end{lemma}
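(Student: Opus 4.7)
The plan is to recognize Lemma \ref{lemma-yuan3-buchong1} as a consequence of the Schur--Horn majorization theorem combined with the rearrangement inequality, once the problem is translated into linear algebra. Since $T_1,\ldots,T_{n-1}$ form an orthonormal basis of the holomorphic tangent space of the level set $\{d=d(z)\}$ and $T_n=\partial d/|\partial d|$ is a unit normal, the frame $\{T_1,\ldots,T_n\}$ is $\omega$-unitary at $z$. Consequently, the matrix $H=(H_{ij})$ with $H_{ij}=\Theta(T_i,J\bar T_j)$ is Hermitian, its diagonal entries are $h_i:=\Theta(T_i,J\bar T_i)$, and its spectrum coincides with $\lambda_1(\Theta),\ldots,\lambda_n(\Theta)$, the eigenvalues of $\Theta$ with respect to $\omega$. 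Note that the specific choice of $T_n$ plays no role beyond making $\{T_i\}$ unitary.

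Next I would order the diagonal entries as $h_{(1)}\leq\cdots\leq h_{(n)}$. The Schur--Horn theorem asserts that the eigenvalue vector of a Hermitian matrix majorizes its diagonal vector, which in increasing order amounts to
\[\sum_{i=1}^k\lambda_i(\Theta)\leq \sum_{i=1}^k h_{(i)},\quad 1\leq k\leq n,\]
with equality at $k=n$ (both sides equal $\mathrm{tr}_\omega\Theta$). Separately, since $\mu_1\geq\cdots\geq\mu_n$, the rearrangement inequality yields
\[\sum_{i=1}^n\mu_i\, h_i \;\geq\; \sum_{i=1}^n \mu_i\, h_{(i)},\]
so it suffices to bound $\sum_i\mu_i h_{(i)}$ below by $\sum_i\mu_i\lambda_i(\Theta)$.

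Setting $D_j=\sum_{i=1}^j(h_{(i)}-\lambda_i(\Theta))\geq 0$ with $D_0=D_n=0$, an Abel summation gives
\[\sum_{i=1}^n\mu_i\bigl(h_{(i)}-\lambda_i(\Theta)\bigr)=\sum_{j=1}^{n-1}(\mu_j-\mu_{j+1})\,D_j\geq 0,\]
since each factor $\mu_j-\mu_{j+1}\geq 0$. Combining the three displays proves the inequality. The main point to handle carefully is the frame identification in the first paragraph: one must confirm that in the $\omega$-unitary frame $\{T_i\}$ the numbers $\Theta(T_i,J\bar T_i)$ are genuinely the diagonal entries of the Hermitian matrix whose eigenvalues are $\lambda_i(\Theta)$. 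Once this is in hand, the rest is classical majorization and involves no analysis specific to the Hermitian manifold setting.
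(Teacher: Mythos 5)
Your proof is correct. The paper's own ``proof'' is simply a citation of \cite[Lemma 6.2]{CNS3} (which, as the paper notes in a remark after Lemma \ref{lemma3.4}, is a special case of a 1956 result of Marcus). You instead derive the linear-algebra fact from scratch: identify $\{T_1,\ldots,T_n\}$ as an $\omega$-unitary frame so that $h_i=\Theta(T_i,J\bar T_i)$ are the diagonal entries of a Hermitian matrix whose spectrum is $\lambda(\Theta)$, invoke Schur--Horn majorization of the diagonal by the spectrum, pass to the rearrangement inequality to replace $(h_i)$ by its increasing sort $(h_{(i)})$, and close with the Abel-summation argument $\sum_i\mu_i(h_{(i)}-\lambda_i)=\sum_{j<n}(\mu_j-\mu_{j+1})D_j\geq 0$. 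This is exactly the content of the Marcus/CNS result, so the two routes prove the same inequality; what your version adds is self-containedness, plus the observation that $T_n$ matters only through making the frame unitary, and that $\mu_n\geq 0$ is not needed (the boundary terms $\mu_1D_0$ and $\mu_nD_n$ both vanish since $D_0=D_n=0$). Each step---unitarity of the frame at the point $z$, Schur--Horn in increasing form, the ordering in the rearrangement step, and the Abel telescoping---is applied correctly.
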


Let $\mu_1,\cdots,\mu_{n-1}$ be as in \eqref{key-18-yuan3}, and set $\mu_n=0$. Let's denote $T_\alpha=\sum_{k=1}^nT_\alpha^k\frac{\partial }{\partial z_k}$. 
For $\Theta=\sqrt{-1}\Theta_{i\bar j}dz_i\wedge d\bar z_j$, we define
\begin{equation}
    \begin{aligned}
    \Lambda_\mu(\Theta):= \sum_{\alpha=1}^{n-1}\mu_{\alpha} T_{\alpha}^i \bar T_{\alpha}^j \Theta_{i\bar j}.  \nonumber
    \end{aligned}
    \end{equation}
    
    \begin{lemma}
    \label{lemma-key2-yuan3}
   There are parameters $\tau$, $A$, $l_i$, $\delta$ depending only on $|u|_{C^0(M)}$, 
   $|\nabla u|_{C^0(\partial M)}$,  
   $|\underline{u}|_{C^2(M)}$, 
   $\partial M$ up to third derivatives and other known data, such that 
   \begin{equation}
    \begin{aligned}
 \,&    \Lambda_\mu (\mathfrak{g}[w])   \leq0  \mbox{ in } \Omega_\delta, \,& u\leq w \mbox{ on } \partial \Omega_\delta. \nonumber
    \end{aligned}
    \end{equation}
    \end{lemma}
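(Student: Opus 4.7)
The plan is to verify the two claimed properties separately. The boundary comparison $u \leq w$ on $\partial \Omega_\delta$ is essentially already established in the paragraphs just above the lemma: on the portion $\partial M \cap \bar\Omega_\delta$ one has $u = \varphi = \underline{u}$ and $\sigma = 0$, so $w - u = A\tau^2 |z|^4 \geq 0$; on the spherical cap $M \cap \partial B_\delta(0)$ the displayed computation in the text yields $u - w \leq -\frac{A\tau^2\delta^4}{2} \leq 0$ once $A$ is large enough relative to $|u-\underline{u}|_{C^0(\Omega_\delta)}$, $\eta/t_0$, and $\sup_i|l_i|\delta$. Thus only the interior inequality $\Lambda_\mu(\mathfrak{g}[w]) \leq 0$ in $\Omega_\delta$ requires genuine work.

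To set that up, I would expand
\[
\mathfrak{g}[w] = \underline{\mathfrak{g}} + (\eta/t_0)\sqrt{-1}\partial\overline{\partial}\sigma + \sqrt{-1}\partial\overline{\partial}(l\sigma) + A\sqrt{-1}\partial\overline{\partial} d^2
\]
and apply $\Lambda_\mu$ to each summand. Three observations collapse the value at the origin. First, because $T_\alpha(z)$ is defined to be tangent to the level set $\{d=d(z)\}$, one has $T_\alpha(d) \equiv 0$ for $\alpha < n$, so the $\sqrt{-1}\partial d \wedge \overline{\partial} d$ piece drops out of $\Lambda_\mu$ and $\Lambda_\mu(\sqrt{-1}\partial\overline{\partial} d^2) = 2d\,\Lambda_\mu(\sqrt{-1}\partial\overline{\partial} d)$. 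Second, $\sigma(0) = 0$, $\sigma_\alpha(0) = 0$ for $\alpha < n$, $l(0) = 0$, and $T_\alpha(0) = \partial/\partial z_\alpha$ together force $\Lambda_\mu(\sqrt{-1}\partial\overline{\partial}(l\sigma))(0) = 0$ by the product rule. Third, the supporting-hyperplane identity \eqref{key-18-yuan3} gives $\Lambda_\mu(A_{t_0})(0) = 0$, which is exactly $t_0 [\Lambda_\mu(\underline{\mathfrak{g}}) + (\eta/t_0)\Lambda_\mu(\sqrt{-1}\partial\overline{\partial}\sigma)](0)$. Adding these contributions yields $\Lambda_\mu(\mathfrak{g}[w])(0) = 0$.

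Next I would Taylor expand in the real coordinates $(t_k)$ of \eqref{t-coordinate}. The linear part of $\Lambda_\mu(\underline{\mathfrak{g}} + (\eta/t_0)\sqrt{-1}\partial\overline{\partial}\sigma)(z)$ is a real-linear functional $L(t)$ whose coefficients involve only third derivatives of $\sigma$, first derivatives of $\underline{\mathfrak{g}}_{\alpha\bar\beta}$ at $0$, and $\eta/t_0$. A direct product-rule calculation shows that $\Lambda_\mu(\sqrt{-1}\partial\overline{\partial}(l\sigma))$ also contains a linear-in-$(t_k)$ part that depends linearly on the $l_i$, and the choice of $l_i$ indicated by \eqref{chosen-1} is precisely the one making this linear part equal to $-L(t)$. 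With that $l$, only an $O(|z|^2)$ remainder survives, so
\[
\Lambda_\mu(\mathfrak{g}[w])(z) \leq 2A d(z)\,\Lambda_\mu(\sqrt{-1}\partial\overline{\partial} d)(z) + C_\ast |z|^2,
\]
with $C_\ast$ depending on $|\underline{u}|_{C^2(M)}$, $\partial M$ through third order, and $\eta/t_0$.

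To finish, invoke \eqref{key-1-yuan3}, which yields $\Lambda_\mu(\sqrt{-1}\partial\overline{\partial}\sigma)(0) \leq -a_1 < 0$; for $\tau$ and $\delta$ small this gives $\Lambda_\mu(\sqrt{-1}\partial\overline{\partial} d)(z) \leq -a_1/2$ throughout $\Omega_\delta$, and since $d(z) \geq \tau|z|^2$, the first term on the right is $\leq -Aa_1\tau|z|^2$, which absorbs $C_\ast|z|^2$ as soon as $Aa_1\tau \geq 2C_\ast$. The main obstacle is the layered balance of parameters: the boundary test wants $A$ large relative to lower order data but is insensitive to $\tau,\delta$, while the interior absorption wants $A\tau$ large and $\delta$ small, and at the same time the $l_i$ produced by \eqref{chosen-1} must remain bounded uniformly in the solution. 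These constraints are compatible, and the selection order $\tau \to A \to l_i \to \delta$ does the job, in the spirit of the barrier constructions of Caffarelli--Nirenberg--Spruck \cite{CNS3} and Li \cite{LiSY2004} adapted to the Hermitian setting.
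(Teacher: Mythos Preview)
Your proposal is correct and follows essentially the same approach as the paper: both expand $\Lambda_\mu(\mathfrak{g}[w])$, observe it vanishes at the origin via the supporting-hyperplane identity \eqref{key-18-yuan3}, Taylor-expand to first order, choose the $l_i$ of \eqref{chosen-1} to cancel the linear part, and then absorb the $O(|z|^2)$ remainder using the strict negativity \eqref{key-1-yuan3} through the $Ad^2$ term. One small correction to your parameter ordering: since the $l_i$ in \eqref{chosen-1} depend on $\tau$ and in turn feed into the $O(|z|^2)$ constant $C_\ast$, the natural selection order is $\tau \to l_i \to (\delta, A)$ rather than $\tau \to A \to l_i \to \delta$.
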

    
    \begin{proof}
 By direct computation
     \begin{equation}
    \begin{aligned}
    \Lambda_\mu (\mathfrak{g}[w])
    =\,&\sum_{\alpha=1}^{n-1} \mu_\alpha T_{\alpha}^i \bar T_{\alpha}^j
     (\chi_{i\bar j}+\underline{u}_{i\bar j}+\frac{\eta}{t_0}\sigma_{i\bar j}) 
       + 2Ad(z)\sum_{\alpha=1}^{n-1} \mu_\alpha T_{\alpha}^i \bar T_{\alpha}^j d_{i\bar j}
       \\\,&
       + \sum_{\alpha=1}^{n-1} \mu_\alpha T_{\alpha}^i \bar T_{\alpha}^j (l(z)\sigma_{i\bar j}
       +l_i\sigma_{\bar j}+\sigma_i l_{\bar j}).  \nonumber
    \end{aligned}
    \end{equation}
\begin{itemize}
     \item
At the origin $\{z=0\}$, $T_{\alpha}^i=\delta_{\alpha i}$, 
\begin{equation}
    \begin{aligned}
   \sum_{\alpha=1}^{n-1} \mu_\alpha T_{\alpha}^i \bar T_{\alpha}^j
     (\chi_{i\bar j}+\underline{u}_{i\bar j}+\frac{\eta}{t_0}\sigma_{i\bar j}) (0)
     =\frac{1}{t_0}\sum_{\alpha=1}^{n-1}\mu_\alpha (A_{t_0})_{\alpha\bar\alpha}=0.  \nonumber
    \end{aligned}
    \end{equation}
    So there are complex constants $k_i$ 
    such that
    \begin{equation}
    \begin{aligned}
   \sum_{\alpha=1}^{n-1} \mu_\alpha T_{\alpha}^i \bar T_{\alpha}^j
     (\chi_{i\bar j}+\underline{u}_{i\bar j}+\frac{\eta}{t_0}\sigma_{i\bar j}) (z)=\sum_{i=1}^n (k_i z_i+ \bar k_{i} \bar z_{i})+O(|z|^2). \nonumber
    \end{aligned}
    \end{equation}

\item   
    \begin{equation}
    \begin{aligned}
    2Ad(z)\sum_{\alpha=1}^{n-1} \mu_\alpha T_{\alpha}^i \bar T_{\alpha}^j d_{i\bar j} \leq -\frac{a_1A}{2}d(z). \nonumber
    \end{aligned}
    \end{equation}
    since  
     \begin{equation}
    \begin{aligned}
    \sum_{\alpha=1}^{n-1}\mu_\alpha T_{\alpha}^i \bar T_{\alpha}^j d_{i\bar j}
 =\,&   \sum_{\alpha=1}^{n-1} \mu_\alpha\sigma_{\alpha\bar\alpha}(z)+
  \tau\sum_{\alpha=1}^{n-1}\mu_\alpha
  \\\,&+\sum_{\alpha=1}^{n-1}\mu_\alpha \left(T_{\alpha}^i \bar T_{\alpha}^j (z)- T_{\alpha}^i \bar T_{\alpha}^j (0)\right)d_{i\bar j} \\
 =\,& -a_1+\tau+O(|z|) \leq -\frac{a_1}{4}  \nonumber
    \end{aligned}
    \end{equation}
    provided one chooses $0<\delta, \tau\ll1.$
Here we also use \eqref{key-1-yuan3},
    \begin{equation}
    \label{yuan3-buchong2}
    \begin{aligned}
    \sum_{\alpha=1}^{n-1} \mu_\alpha T_{\alpha}^i \bar T_{\alpha}^j(z)
    =\sum_{\alpha=1}^{n-1} \mu_\alpha T_{\alpha}^i \bar T_{\alpha}^j(0)+O(|z|)
    =\sum_{\alpha=1}^{n-1} \mu_\alpha \delta_{\alpha i} \delta_{\alpha j}+O(|z|),
       \end{aligned}
    \end{equation}
    and
\begin{equation}
     \label{c3-yuan3}
     \begin{aligned}
     \sum_{\alpha=1}^{n-1}\mu_\alpha\sigma_{\alpha\bar\alpha}(z)=\sum_{\alpha=1}^{n-1}\mu_\alpha\sigma_{\alpha\bar\alpha}(0)+O(|z|).
     \end{aligned}
     \end{equation}  
\item  
 \begin{equation}
    \begin{aligned}
  \,& 
     l(z)\sum_{\alpha=1}^{n-1}\mu_\alpha T^i_\alpha \bar T^j_\alpha
    \sigma_{i\bar j}+\sum_{\alpha=1}^{n-1}\mu_{\alpha}T^i_\alpha \bar T^j_\alpha(l_i\sigma_{\bar j}
    +\sigma_i l_{\bar j}) \\
   = \,&
    l(z) \sum_{\alpha=1}^{n-1}\mu_\alpha\sigma_{\alpha\bar\alpha}(0)
    - \tau\sum_{\alpha=1}^{n-1} \mu_\alpha (z_\alpha l_\alpha+\bar z_{\alpha}  \bar l_{\alpha})
    +O(|z|^2) \nonumber
    \end{aligned}
    \end{equation}
    since by \eqref{yuan3-buchong2},  
    $ \sum_{i=1}^n T_\alpha^i \sigma_i=-\tau\sum_{i=1}^n T_\alpha^i
\bar z_i $
we have
    \begin{equation}
    \begin{aligned}
     l(z) \sum_{\alpha=1}^{n-1}\mu_\alpha T^i_\alpha \bar T^j_\alpha \sigma_{i\bar j} 
    = l(z) \sum_{\alpha=1}^{n-1}\mu_\alpha\sigma_{\alpha\bar\alpha}(0)
+O(|z|^2),   \nonumber
    \end{aligned}
    \end{equation}
    \begin{equation}
    \begin{aligned}
       \sum_{\alpha=1}^{n-1}\mu_\alpha T^i_\alpha \bar T^j_\alpha(l_i \sigma_{\bar j} + \sigma_i l_{\bar j})
   =-\tau \sum_{\alpha=1}^{n-1} \mu_\alpha(\bar l_{\alpha} \bar z_{\alpha}+l_\alpha z_\alpha)
+O(|z|^2).   \nonumber
    \end{aligned}
    \end{equation}
    \end{itemize}
    Putting these together, 
     \begin{equation}
     \label{together1}
    \begin{aligned}
    \Lambda_\mu(\mathfrak{g}[w])\leq\,&
  \sum_{\alpha=1}^{n-1} 2\mathfrak{Re}
  \left\{z_\alpha(k_\alpha 
  -\tau\mu_\alpha l_\alpha
  +l_\alpha\sum_{\beta=1}^{n-1}\mu_\beta \sigma_{\beta\bar\beta}(0))\right\}
    \\ \,& + 2\mathfrak{Re}\left\{z_n(k_n+l_n\sum_{\beta=1}^{n-1}\mu_\beta \sigma_{\beta\bar\beta}(0))\right\} 
    -\frac{Aa_1}{2}d(z) + O(|z|^2).   \nonumber
    \end{aligned}
    \end{equation}
   Let $l_n=-\frac{k_n}{\sum_{\beta=1}^{n-1} \mu_\beta \sigma_{\beta\bar\beta}(0)}$.
  For $1\leq \alpha\leq n-1$, we set
  \begin{equation}
  \label{chosen-1}
    \begin{aligned}
    l_\alpha=-\frac{k_\alpha}{\sum_{\beta=1}^{n-1}\mu_\beta \sigma_{\beta\bar\beta}(0)-\tau \mu_\alpha}.
    \end{aligned}
    \end{equation}
From $\mu_\alpha\geq 0$ and \eqref{key-1-yuan3}, we see
    such $l_i$ (or equivalently the  $l(z)$) are all well defined and uniformly bounded.
    
    We thus complete the proof if $0<\tau, \delta\ll1$, $A\gg1$.
    \end{proof}

    \subsubsection*{\bf Completion of the proof of Lemma \ref{keylemma1-yuan3}}
  Let $w$ be as in Lemma \ref{lemma-key2-yuan3}. From the construction above, we know that there is a uniform positive constant $C_1'$ such that
  $$ |\mathfrak{g}[w]|_{C^0(\Omega_\sigma)}\leq C_1'.$$
    Let $\lambda[w]=\lambda_{\omega}(\mathfrak{g}[w])$. Assume 
    $\lambda_1[w]\leq \cdots\leq \lambda_n[w]$.
    Lemma \ref{lemma-key2-yuan3}, together with Lemma \ref{lemma-yuan3-buchong1}, implies
    \begin{equation}
    \begin{aligned}
    \sum_{\alpha=1}^{n-1} \mu_\alpha \lambda_\alpha[w]\leq 0 \mbox{ in } \Omega_\delta.  \nonumber
    \end{aligned}
    \end{equation}
So $(\lambda_1[w],\cdots,\lambda_{n-1}[w])\notin\Gamma_\infty$ by \eqref{key-18-yuan3}. In other words, $\lambda[w]\in X$, where
\[X=\{\lambda\in\mathbb{R}^n: \lambda'\in \mathbb{R}^{n-1}\setminus \Gamma_\infty\}\cap \{\lambda\in\mathbb{R}^n:   |\lambda|\leq C_1'\}.\]
Let $$\bar\Gamma^{\inf_M\psi}=\{\lambda\in\Gamma: f(\lambda)\geq \inf_M\psi\}$$
be the closure of sublevel set $\Gamma^{\inf_{M}\psi}$.
 Notice that $\Gamma_\infty$ is open so $X$ is a compact subset; furthermore $X\cap \bar\Gamma^{\inf_M\psi}=\emptyset$. 
So we can deduce that the distance between $\bar\Gamma^{\inf_M\psi}$ and $X$ 
is greater than some positive constant depending on  
$\delta_{\psi,f}$ and other known data.
Therefore, there exists an $\epsilon>0$  such that for any $z\in\Omega_\delta$
\begin{equation}
    \begin{aligned}
    \epsilon\vec{\bf 1} +\lambda[w]\notin \bar\Gamma^{\inf_M\psi}.  \nonumber
    \end{aligned}
    \end{equation}

By \eqref{asym-sigma1} one can choose a  positive constant $C'$ such that $ x_n\leq C'|z|^2$ on 
$\partial M\cap \bar{\Omega}_\delta$. As a result,  
there is a positive constant $C_2$ depending only on $M$ and $\delta$ so that  
$$x_n\leq C_2 |z|^2 \mbox{ on }\partial\Omega_\sigma.$$

 Let $\epsilon$ and $C_2$ be as above, we define ${h}(z)=w(z)+\epsilon (|z|^2-\frac{x_n}{C_2})$. Thus
    \begin{equation}
    \begin{aligned}
    u\leq {h} \mbox{ on } \partial \Omega_\delta.  \nonumber
    \end{aligned}
    \end{equation}
    Moreover, $\chi_{i\bar j}+{h}_{i\bar j}=(\chi_{i\bar j}+w_{i\bar j})+\epsilon\delta_{ij}$ so $\lambda[{h}]\notin \bar\Gamma^{\inf_M\psi}.$
    By \cite[Lemma B]{CNS3}, we have $$u\leq {h} \mbox{ in }\Omega_\delta.$$
   Notice 
   $u(0)=\varphi(0)$ and ${h}(0)=\varphi(0)$, we have $u_{x_n}(0)\leq h_{x_n}(0)$, and
    \begin{equation}
    \begin{aligned}
    t_0\leq \frac{1}{1+\epsilon/(\eta C_2)}, \mbox{ i.e., }  (1-t_0)^{-1}\leq 1+\frac{\eta C_2}{\epsilon}.  \nonumber
    \end{aligned}
    \end{equation}

 \section{Quantitative boundary estimate: tangential-normal case}
 \label{sec5}
 
 In this section we derive quantitative boundary estimate for tangential-normal derivatives.

 \begin{proposition}
\label{mix-general}
 
 Let $(M,J,\omega)$ be a compact Hermitian manifold with $C^3$-smooth boundary.
In addition we assume \eqref{elliptic}-\eqref{addistruc}, \eqref{existenceofsubsolution} and \eqref{nondegenerate} hold.
Then 
for any admissible solution $u\in C^3(M)\cap C^2(\bar M)$ to the Dirichlet problem \eqref{mainequ},
 there is a uniform positive constant $C$ depending on $|\varphi|_{C^{3}(\bar M)}$, 
 $|\underline{u}|_{C^{2}(\bar M)}$,  
$|\psi|_{C^1(M)}$, $|\nabla u|_{C^0(\partial M)}$,
$\partial M$ up to third derivatives
and other known data (but neither on $(\delta_{\psi,f})^{-1}$ nor on $\sup_{M}|\nabla u|$)
such that
\begin{equation}
\label{quanti-mix-derivative-00} 
|\nabla^2 u(T,\nu)|\leq C 
\left(1+\sup_{M}|\nabla u|\right)
 \end{equation}
 for any $T\in T_{\partial M}$ with $|T|=1$,   
where  $\nabla^2 u$ denotes the real Hessian of $u$.
\end{proposition}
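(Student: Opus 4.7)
The plan is to fix $p_0\in\partial M$ and work in the local holomorphic coordinates of \eqref{goodcoordinate1}--\eqref{t-coordinate}. It suffices to control $|u_{t_\gamma x_n}(p_0)|$ for $1\le\gamma\le 2n-1$, since the complementary double-normal case is Propositions \ref{proposition-quar-yuan1} and \ref{proposition-quar-yuan2}. With the local defining function $\sigma$ of $\partial M$, the real first-order operator
\[
\mathcal{T}_\gamma \,:=\, \partial_{t_\gamma} \,-\, \frac{\sigma_{t_\gamma}}{\sigma_{x_n}}\,\partial_{x_n}
\]
is tangent to $\partial M$, so the function $\Phi:=\mathcal{T}_\gamma(u-\varphi)$ vanishes identically on $\partial M$ and in particular $\Phi(p_0)=0$. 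Bounding $|u_{t_\gamma x_n}(p_0)|$ therefore reduces to bounding $|\partial_\nu\Phi(p_0)|$ from above by $C(1+\sup_M|\nabla u|)$, which I propose to extract from a boundary-comparison with a local barrier defined on a half-ball $\Omega_\delta:=M\cap B_\delta(p_0)$.

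\textbf{Linearized equation and barrier.} Write $L v := F^{i\bar j}v_{i\bar j}$ for the linearization along the admissible solution, with $F^{i\bar j}=\partial F/\partial\mathfrak{g}_{i\bar j}$. Applying $\mathcal{T}_\gamma$ to $F(\mathfrak{g}[u])=\psi$ gives
\[
L\Phi \,=\, \mathcal{T}_\gamma\psi \,-\, F^{i\bar j}\mathcal{T}_\gamma(\chi_{i\bar j}-\varphi_{i\bar j}) \,+\, \bigl[\mathcal{T}_\gamma,\;F^{i\bar j}\partial_i\overline{\partial}_j\bigr]u,
\]
and since the commutator differentiates $u$ at most once, $|L\Phi|\le C(1+\sup_M|\nabla u|)\bigl(1+\sum_i f_i(\lambda(\mathfrak{g}))\bigr)$ throughout $\Omega_\delta$. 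Following the template of \cite{Hoffman1992Boundary,Guan1993Boundary,Guan1998The}, I would construct
\[
w \,:=\, A(\underline{u}-u) \,+\, B\,\sigma \,-\, N\,\sigma^{2} \,+\, \tau\sum_{\beta=1}^{2n-1} t_\beta^{\,2},
\]
with positive parameters chosen in the order $N$, $\tau$, $A$, $B$. The key ingredients are: \eqref{concave} together with Lemma \ref{lemma3.4} yield $F^{i\bar j}(\underline{\mathfrak{g}}-\mathfrak{g})_{i\bar j}\ge 0$, which away from $\partial\Gamma^{\inf_M\psi}$ is upgraded via the sublevel-set separation used in Section \ref{sec4} to $c_0(1+\sum_i f_i)$ with $c_0$ independent of $(\delta_{\psi,f})^{-1}$; after shrinking $\delta$ one has $-L(\sigma^{2})\ge\tfrac12\sum_i f_i$; and $L(t_\beta^{\,2})$ is essentially $F^{\beta\bar\beta}$ plus $O(\delta)\sum_i f_i$. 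Selecting the parameters successively large arranges
\[
L\Bigl(w \pm (1+\sup_M|\nabla u|)^{-1}\Phi\Bigr) \,\le\, 0 \quad\text{in }\Omega_\delta.
\]

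\textbf{Maximum principle and conclusion.} On $\partial M\cap\bar\Omega_\delta$ one has $\Phi\equiv 0$ and $w\ge 0$; on $M\cap\partial B_\delta(p_0)$ the tangential term $\tau\sum t_\beta^{\,2}\gtrsim \tau\delta^{2}$ together with $B\sigma$ dominates $A|\underline{u}-u|$ and $(1+\sup_M|\nabla u|)^{-1}|\Phi|$, the latter being controlled on $\partial B_\delta(p_0)\cap M$ by the boundary gradient bound \eqref{key-14-yuan3} and the interior value of $\nabla u$ scaled by the prefactor. Hence $w\ge (1+\sup_M|\nabla u|)^{-1}|\Phi|$ on the full topological boundary of $\Omega_\delta$, and the maximum principle propagates this into $\Omega_\delta$. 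Both sides vanish at $p_0$, so differentiating along the inner normal gives $|\partial_\nu\Phi(p_0)|\le(1+\sup_M|\nabla u|)\,\partial_\nu w(p_0)\le C(1+\sup_M|\nabla u|)$, which is exactly \eqref{quanti-mix-derivative-00}.

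\textbf{Main obstacle.} The delicate point is that the constant $C$ must depend neither on $\sup_M|\nabla u|$ nor on $(\delta_{\psi,f})^{-1}$. The linear (as opposed to quadratic) dependence on $|\nabla u|$ is enforced by rescaling $\Phi$ by $(1+\sup_M|\nabla u|)^{-1}$ before comparing with $w$; every piece of the differential inequality for $Lw$ and every boundary term must then be tracked to verify that $\sup_M|\nabla u|$ enters at worst linearly through $B\sigma$. Independence from $(\delta_{\psi,f})^{-1}$ requires replacing the easy argument in which the strict-subsolution gap $\delta_{\psi,f}$ absorbs $|L\Phi|$ by a subtler one that uses only the non-strict lower bound $F^{i\bar j}(\underline{\mathfrak{g}}-\mathfrak{g})_{i\bar j}\ge 0$ from Lemma \ref{lemma3.4}, leveraging the geometric cushions $-N\sigma^{2}$ and $\tau\sum t_\beta^{\,2}$; verifying this is the main technical content.
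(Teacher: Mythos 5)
Your overall framework (tangential operator, local barrier, maximum principle) matches the paper's, but there is a genuine gap in your bound on $L\Phi$, and it is fatal to the argument as written.

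On a curved Hermitian manifold, differentiating $F(\mathfrak{g}[u])=\psi$ along $\partial_{t_\gamma}$ produces commutator terms involving the torsion $T^k_{ij}$ and curvature of $\omega$ that are \emph{not} controlled by $\sum_i f_i$. After diagonalizing, terms of the form $F^{i\bar j}T^l_{i\gamma}u_{l\bar j}$ contribute a quantity comparable to $\sum_i f_i|\lambda_i|$, which when eigenvalues are large dominates $\sum_i f_i$. The paper's version of your first step is
\[
\mathcal{L}(\pm u_{t_\gamma}) \geq \pm\psi_{t_\gamma}-C\left(1+|\nabla u|\right)\sum_{i=1}^n f_i - C\sum_{i=1}^n f_i|\lambda_i|,
\]
and your claimed inequality $|L\Phi|\le C(1+\sup_M|\nabla u|)(1+\sum_i f_i)$ drops the last term. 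Your barrier $w=A(\underline{u}-u)+B\sigma-N\sigma^{2}+\tau\sum t_\beta^{2}$ has no piece whose image under $\mathcal{L}$ is quadratic in the $\lambda_i$: the concavity term $F^{i\bar j}(\underline{\mathfrak{g}}-\mathfrak{g})_{i\bar j}$, the $\sigma$ and $\sigma^2$ terms, and the $\sum t_\beta^2$ term all produce at most $O(\sum_i f_i)$, so nothing absorbs $\sum_i f_i|\lambda_i|$ and the differential inequality cannot close.

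The paper's remedy is to add $\frac{1}{\sqrt{b_1}}\sum_{\tau<n}|\widetilde{u}_\tau|^{2}$ to the barrier, where $\widetilde{u}=u-\varphi$ and $b_1=1+\sup_M|\nabla u|^{2}$. Applying $\mathcal{L}$ to this term and invoking Lemma \ref{lemma5.4-yuan} yields the strictly positive quadratic contribution $\frac{1}{4}\sum_{i\neq r}f_i\lambda_i^{2}$, which by Cauchy--Schwarz dominates $\sum_i f_i|\lambda_i|$ up to $C\sqrt{b_1}\sum_i f_i$. There is also a secondary issue you omit: differentiating the coefficient $\widetilde{\eta}=\sigma_{t_\gamma}/\sigma_{x_n}$ in $\mathcal{T}_\gamma$ generates $F^{i\bar j}\widetilde{\eta}_i(u_{x_n})_{\bar j}$, which the paper controls by adding $\frac{\gamma}{\sqrt{b_1}}(u_{y_n}-\varphi_{y_n})^{2}$ inside $\Phi$. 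Both auxiliary terms are essential on a general Hermitian manifold (they are vacuous only when $M$ is flat), and without them the maximum principle step cannot be run. Your observations about the two sources of delicacy (linear dependence on $|\nabla u|$; no dependence on $(\delta_{\psi,f})^{-1}$) are accurate, but the uncontrolled $\sum f_i|\lambda_i|$ term is the actual obstacle, and your proposed barrier does not address it.
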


 \subsection{Tangential operators on the boundary}
\label{Tangential-opera-1}
For  a  given point $p_0\in \partial M$,
we choose local holomorphic coordinates \eqref{goodcoordinate1}
  centered at $p_0$ in a neighborhood which we assume to be contained in $M_{\delta}:=\{z\in M: \sigma(z)<\delta\}$,
 such that $p_0=\{z=0\}$, $g_{i\bar j}(0)=\delta_{ij}$ and $\frac{\partial}{\partial x_{n}}$ is the interior normal direction to $\partial M$ at $p_0$.
As in \eqref{t-coordinate} we set
\begin{equation}
t_{2k-1}=x_{k}, \ t_{2k}=y_{k},\ 1\leq k\leq n-1;\ t_{2n-1}=y_{n},\ t_{2n}=x_{n}.  \nonumber
\end{equation}

We define the tangential operator on $\partial M$ 
 \begin{equation}
  \label{tangential-oper-general1}
\begin{aligned} 
 \mathcal{T}=\nabla_{\frac{\partial}{\partial t_{\alpha}}}- \widetilde{\eta}\nabla_{\frac{\partial}{\partial x_{n}}}, 
 \mbox{ for each fixed }
  1\leq \alpha< 2n,
\end{aligned}
\end{equation}
 where $\widetilde{\eta}=\frac{\sigma_{t_{\alpha}}}{\sigma_{x_{n}}}$, $\sigma_{x_{n}}(0)=1,$ $\sigma_{t_\alpha}(0)=0$.
  One has $\mathcal{T}(u-\varphi)=0$ on $\partial M\cap \bar\Omega_\delta$.
 The boundary value condition also gives for each $1\leq \alpha, \beta<n$, 
\begin{equation}\label{left-up}
(u-\varphi)_{t_{i}t_{j}}(0)= (u-\varphi)_{x_{n}}(0)\sigma_{t_{i}t_{j}}(0)
\mbox{  }\forall 1\leq i,j<2n.
\end{equation}

Let's turn our attention to the setting of complex manifolds with \textit{holomorphically flat} boundary in the sense that,
for any $p_0\in \partial M$, one can pick local holomorphic coordinates  
\begin{equation}
\begin{aligned}
\label{holomorphic-coordinate-flat}
(z_1,\cdots, z_n), \mbox{  } z_i=x_i+\sqrt{-1}y_i, 
\end{aligned}
\end{equation}
 centered at $p_0$ such that
 $\partial M$ is locally of the form $$\mathfrak{Re}(z_n)=0.$$ Furthermore we may assume $g_{i\bar j}(p_0)=\delta_{ij}$. 
Under the holomorphic coordinate \eqref{holomorphic-coordinate-flat}, we can take
\begin{equation}
\label{tangential-oper-Leviflat1}
\begin{aligned}
\mathcal{T}=D:=    \frac{\partial}{\partial x_\alpha}, \mbox{ } \frac{\partial}{\partial y_\alpha},  
  \quad 1\leq \alpha\leq n-1.
\end{aligned}
\end{equation}
 It would be worthwhile to note that 
 such local holomorphic coordinate system \eqref{holomorphic-coordinate-flat} is only needed in the proof  of Proposition \ref{mix-Leviflat}. In addition,  when $M=X\times S$,
$D= \frac{\partial}{\partial x_{\alpha}}, \mbox{ }
 \frac{\partial}{\partial y_{\alpha}},$ where  $z'=(z_1,\cdots z_{n-1})$ is local holomorphic coordinate of $X$.

For simplicity we denote the tangential operator on $\partial M$ by
\begin{equation}
\label{tangential-operator123}
\begin{aligned}
\mathcal{T}=\nabla_{\frac{\partial}{\partial t_{\alpha}}}-\gamma\widetilde{\eta}\nabla_{\frac{\partial}{\partial x_{n}}}
\end{aligned}
\end{equation}
where  
\begin{equation}
	\gamma=
	\begin{cases}
	  0 \,& \mbox{ $\partial M$ is holomorphically flat};\\
	  1 \,& \mbox{ otherwise.}\nonumber
 \end{cases}
\end{equation}

By  \eqref{asym-sigma1}  one derives $|\widetilde{\eta}|\leq C'|z|$ on $\Omega_\delta$. 
Since  $(u-\varphi)|_{\partial M}=0$  we obtain
 $\mathcal{T}(u-\varphi)|_{\partial M}=0$. Together with \eqref{key-14-yuan3}, one has
\begin{equation}\begin{aligned}\label{bdr-t}
 |(u-\varphi)_{t_{\alpha}}|\leq C|z| \mbox{ on } \partial M\cap\bar \Omega_\delta,
\mbox{  } \forall 1\leq \alpha<2n.
\end{aligned}
\end{equation}

\subsection{Completion of proof of Proposition \ref{mix-general}}
\label{Quantitative-boundes-mix}

Let's 
 denote 
\begin{equation}
\begin{aligned}
\,& F^{i\bar j}(A)=\frac{\partial F(A)}{\partial a_{i\bar j}}, \,& A=(a_{i\bar j}).  \nonumber
\end{aligned}
\end{equation}
The linearized operator of equation \eqref{mainequ} at $u$ is given by
\begin{equation}
\begin{aligned}
\mathcal{L}v = F^{i\bar j}(\mathfrak{g}[u])v_{i\bar j}.  \nonumber
\end{aligned}
\end{equation}

The following lemma plays an important role in the proof. 
 The lemma of this type goes back at least to \cite[Theorems 2.16, 2.17]{Guan12a}.
 \begin{lemma}
[{\cite[Lemma 2.2]{GSS14}}]
\label{guan2014}
 Suppose 
  \eqref{elliptic} and \eqref{concave} hold.
 Let $K$ be a compact subset of $\Gamma$ and $\beta>0$. There is a constant $\varepsilon>0$ such that,
 for  $\mu\in K$ and $\lambda\in \Gamma$, when $|\nu_{\mu}-\nu_{\lambda}|\geq \beta$,
\begin{equation}
\label{2nd}
\begin{aligned}
\sum_{i=1}^n f_{i}(\lambda)(\mu_{i}-\lambda_{i})\geq f(\mu)-f(\lambda)+\varepsilon  \left(1+\sum_{i=1}^n f_{i}(\lambda)\right).
\end{aligned}
\end{equation}
Here $\nu_{\lambda}=Df(\lambda)/|Df(\lambda)|$ denotes the unit normal vector to the level set
$\partial\Gamma^{f(\lambda)}$, where $Df(\lambda)=(f_1(\lambda),\cdots, f_n(\lambda))$.

\end{lemma}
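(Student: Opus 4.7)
The plan is to prove this strengthened concavity inequality by perturbing $\mu$ slightly in the direction $\nu_\mu-\nu_\lambda$, so that the standard concavity inequality \eqref{concavity1} picks up a quantitative surplus proportional to $|\nu_\mu-\nu_\lambda|^2\geq\beta^2$. The key observation is that the perturbation produces two separate gains of the correct order: one contributes a pure constant $\varepsilon$ and the other contributes a multiple of $\sum_i f_i(\lambda)$, matching the factor $1+\sum_i f_i(\lambda)$ on the right-hand side of \eqref{2nd} exactly.

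First I would fix a small parameter $\delta>0$ (to be chosen in terms of $K$ and $\beta$) and set $\mu_\delta:=\mu+\delta(\nu_\mu-\nu_\lambda)$. Since $K\subset\Gamma$ is a compact subset of the open cone $\Gamma$ and $|\nu_\mu-\nu_\lambda|\leq 2$, for $\delta<\mathrm{dist}(K,\partial\Gamma)/3$ the point $\mu_\delta$ lies uniformly in a larger compact set $K'\subset\Gamma$ independently of $\lambda$. I would then apply \eqref{concavity1} to the pair $(\lambda,\mu_\delta)$ and rearrange to get
\[
\sum_{i=1}^n f_i(\lambda)(\mu_i-\lambda_i)\;\geq\; f(\mu_\delta)-f(\lambda)\;-\;\delta\sum_{i=1}^n f_i(\lambda)(\nu_\mu-\nu_\lambda)_i.
\]
Two quantitative gains then appear on the right. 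From the Taylor expansion at $\mu$, using
\[
Df(\mu)\cdot(\nu_\mu-\nu_\lambda)=|Df(\mu)|\bigl(1-\nu_\mu\cdot\nu_\lambda\bigr)\geq \tfrac{1}{2}|Df(\mu)|\beta^2
\]
and the lower bound $|Df(\mu)|\geq c_K>0$ on the compact set $K$ (guaranteed by $f_i>0$ on $\Gamma$), one obtains $f(\mu_\delta)\geq f(\mu)+c_1\beta^2\delta+O(\delta^2)$ with the remainder uniform over $\mu\in K$ and $\mu_\delta\in K'$. On the other hand, the last term in the displayed inequality equals $-\delta|Df(\lambda)|(\nu_\lambda\cdot\nu_\mu-1)\geq \tfrac{1}{2}\delta\beta^2|Df(\lambda)|$, and the Cauchy--Schwarz bound $\sqrt{n}\,|Df(\lambda)|\geq \sum_i f_i(\lambda)$ upgrades it to at least $\tfrac{\beta^2\delta}{2\sqrt{n}}\sum_i f_i(\lambda)$. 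Combining the two gains and taking $\delta$ small enough to absorb the $O(\delta^2)$ remainder yields \eqref{2nd} with $\varepsilon$ depending only on $K$, $\beta$, and $f|_{K'}$.

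The only delicate point is that $\lambda$ ranges over all of $\Gamma$, not a compact subset, so $\sum_i f_i(\lambda)$ can be arbitrarily large as $\lambda\to\partial\Gamma$. This is precisely why the right-hand side of \eqref{2nd} carries the factor $1+\sum_i f_i(\lambda)$: the gain against $\sum_i f_i(\lambda)$ is extracted from the $|Df(\lambda)|$ factor, which automatically scales with $\sum_i f_i(\lambda)$ by Cauchy--Schwarz, so no information about the behaviour of $f$ near $\partial\Gamma$ or at infinity is required. The one verification I expect to require attention is that $\mu_\delta$ remains in $\Gamma$ uniformly in $\lambda$, but as noted this is immediate from $|\nu_\mu-\nu_\lambda|\leq 2$ and the compactness of $K$ inside the open cone $\Gamma$.
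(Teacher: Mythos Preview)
The paper does not supply its own proof of this lemma: it is quoted verbatim from \cite[Lemma 2.2]{GSS14}, with only the remark that the constant $\varepsilon$ depends just on $\mu$, $\beta$, and known data. So there is no in-paper argument to compare against.

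Your proposal is correct and is in fact the standard perturbation argument behind this lemma. The computation is sound: applying \eqref{concavity1} at $(\lambda,\mu_\delta)$ and expanding the cross term $-\delta\,Df(\lambda)\cdot(\nu_\mu-\nu_\lambda)=-\delta|Df(\lambda)|(\nu_\lambda\cdot\nu_\mu-1)\geq \tfrac{\delta\beta^2}{2}|Df(\lambda)|$ gives the $\sum_i f_i(\lambda)$ gain via Cauchy--Schwarz, while the Taylor expansion $f(\mu_\delta)=f(\mu)+\delta\,Df(\mu)\cdot(\nu_\mu-\nu_\lambda)+O(\delta^2)$ with $Df(\mu)\cdot(\nu_\mu-\nu_\lambda)=|Df(\mu)|(1-\nu_\mu\cdot\nu_\lambda)\geq \tfrac{\beta^2}{2}c_K$ gives the pure constant gain. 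The only point worth making explicit is that the $O(\delta^2)$ remainder is controlled because $f$ is assumed smooth on $\Gamma$ and $K'$ is compact in $\Gamma$, so $D^2 f$ is bounded on $K'$; concavity alone would give the inequality in the wrong direction at that step, so the $C^2$ bound on $K'$ is genuinely what is being used. With $\delta$ chosen small enough in terms of $\beta$, $c_K$, and $\sup_{K'}|D^2 f|$, both gains survive and yield \eqref{2nd}.
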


\begin{remark}
	From the original proof of 
	\cite[Lemma 2.2]{GSS14},
	we know that the constant $\varepsilon$ in 
	\eqref{2nd}  depends only on
	$\mu$, $\beta$ and other known data.
\end{remark}

We derive quantitative boundary estimates for tangential-normal derivatives by using barrier functions. 
This type of construction of barrier functions  
goes back at least  to 
\cite{Guan1993Boundary,Guan1998The}.
We shall point out that, during the proof, 
 the constants  $C$, $C_{\Phi}$,  $C_1$, $C_1'$, $C_2$, $A_1$, $A_2$, $A_3$, etc, 
depend on  neither  $|\nabla u|$ nor  $(\delta_{\psi,f})^{-1}$. The constant $\gamma$ always stands for that from \eqref{tangential-operator123}.

By direct calculations, one derives  
 \begin{equation}
    \begin{aligned}
 \,&   u_{x_{k} l}=u_{l x_{k}}+T^{p}_{kl}u_{p}, \,&
 u_{y_{k} l}=u_{l y_{k}}+\sqrt{-1}T^{p}_{kl}u_{p},  \nonumber
 \end{aligned}
\end{equation}
 \begin{equation}
    \begin{aligned}
\,& (u_{x_k})_{\bar j}=u_{x_k\bar j}+\overline{\Gamma_{kj}^l} u_{\bar l}, 
\,& (u_{y_k})_{\bar j}=u_{y_k\bar j}-{\sqrt{-1}}\overline{\Gamma_{kj}^l} u_{\bar l}, \nonumber
    \end{aligned}
    \end{equation}
 \begin{equation}
    \begin{aligned}
    (u_{x_k})_{i\bar j}=
    u_{x_ki\bar j}+\Gamma_{ik}^lu_{l\bar j}+\overline{\Gamma_{jk}^l} u_{i\bar l}-g^{l\bar m}R_{i\bar j k\bar m}u_l,  \nonumber
     \end{aligned}
\end{equation}
 \begin{equation}
    \begin{aligned}
    (u_{y_k})_{i\bar j}=
    u_{y_ki\bar j}+\sqrt{-1}
    \left(\Gamma_{ik}^l u_{l\bar j}-\overline{\Gamma_{jk}^l} u_{i\bar l}\right)-\sqrt{-1}g^{l\bar m}R_{i\bar j k\bar m}u_l,
    \nonumber
    \end{aligned}
    \end{equation}
\begin{equation}
\label{yuan-Bd2}
\begin{aligned}
F^{i\bar j}u_{x_{k}i\bar j}
= F^{i\bar j}u_{i\bar j x_{k}}+g^{l\bar m}F^{i\bar j}R_{i\bar j k\bar m}u_{l}-2\mathfrak{Re}
\left(F^{i\bar j}T^{l}_{ik}u_{l\bar j}\right), \nonumber
 \end{aligned}
\end{equation}
 \begin{equation}
    \begin{aligned}
F^{i\bar j}u_{y_{k}i\bar j}=F^{i\bar j}u_{i\bar j y_{k}}+\sqrt{-1}g^{l\bar m}F^{i\bar j}R_{i\bar j k\bar m}u_{l}+
2\mathfrak{Im}\left(F^{i\bar j}T^{l}_{ik}u_{l\bar j}\right), \nonumber
\end{aligned}
\end{equation}
where 
\[T^{k}_{ij}= g^{k\bar l}  \left(\frac{\partial g_{j\bar l}}{\partial z_i} - \frac{\partial g_{i\bar l}}{\partial z_j}\right), \quad
 R_{i\bar jk\bar l}= -\frac{\partial^2 g_{k\bar l}}{\partial z_i \partial\bar z_j}
 + g^{p\bar q}\frac{\partial g_{k\bar q}}{\partial z_i}\frac{\partial g_{p\bar l}}{\partial \bar z_j}.\]
As a consequence, 
 \begin{equation}
 \label{linear-1}
    \begin{aligned}
 \mathcal{L}(\pm u_{t_{\alpha}}) \geq \pm\psi_{t_{\alpha}}
 -C\left(1+|\nabla u|\right)\sum_{i=1}^n f_i -C\sum_{i=1}^n f_i |\lambda_i|. 
  \end{aligned}
\end{equation}

Denote  $$b_{1}=1+\sup_{M} |\nabla u|^{2}.$$
\begin{lemma}
\label{yuan-key0}
Given $p_0\in\partial M$, let $u$ be a $C^3$  admissible solution to equation \eqref{mainequ}, and $\Phi$ is defined as
 \begin{equation}
 \label{Phi-def1}
 \begin{aligned}
 \Phi=\pm \mathcal{T}(u-\varphi)+\frac{\gamma}{\sqrt{b_1}}(u_{y_{n}}-\varphi_{y_{n}})^2 \mbox{ in } \Omega_\delta.
 \end{aligned}
 \end{equation}
 Then there is a positive constant $C_{\Phi}$ depending on
$|\varphi|_{C^{3}(\bar M)}$, $|\chi|_{C^{1}(\bar M)}$,
 $|\nabla \psi|_{C^{0}(\bar M)}$
 and other known data 
 such that  for some small positive constant $\delta$,
\begin{equation}
\label{yuan-1}
\begin{aligned}
\mathcal{L}\Phi \geq 
 -C_{\Phi}  \sqrt{b_1}  \sum_{i=1}^n f_i  - C_{\Phi}  \sum_{i=1}^n f_i|\lambda_i|
-C_\Phi  \mbox{ on } \Omega_{\delta}. \nonumber
\end{aligned}
\end{equation}

  In particular, if  $\partial M$ is holomorphically flat and $\varphi\equiv \mathrm{constant}$
then $C_{\Phi}$ depends on $|\chi|_{C^{1}(\bar M)}$,
 $|\nabla \psi|_{C^{0}(\bar M)}$
 and other known data.

\end{lemma}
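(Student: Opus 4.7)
The plan is to apply $\mathcal{L}$ to each of the three pieces in $\Phi = \pm(u-\varphi)_{t_\alpha} \mp \gamma\widetilde\eta(u-\varphi)_{x_n} + \frac{\gamma}{\sqrt{b_1}}(u_{y_n}-\varphi_{y_n})^2$ separately, and then show that the single ``bad'' cross term produced in the middle piece is absorbed by a positive gradient term coming from the quadratic correction, via Cauchy--Schwarz with scaling tuned by $\sqrt{b_1}$. First, for $\mathcal{L}(\pm(u-\varphi)_{t_\alpha})$ I would apply \eqref{linear-1}, use $(1+|\nabla u|)\leq 2\sqrt{b_1}$, and treat the $\varphi$-contributions by $|\mathcal{L}\varphi_{t_\alpha}|\leq C\sum f_i$ since $\varphi\in C^3$. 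This already yields the claimed bound in the holomorphically flat case $\gamma=0$, and when additionally $\varphi\equiv\mathrm{const}$ all $\varphi$-dependent constants drop out, giving the $C_\Phi$ refinement stated at the end of the lemma.

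Next, when $\gamma=1$, I would expand the normal correction via the product rule
\[
\mathcal{L}\bigl(\widetilde\eta\,v\bigr) = \widetilde\eta\,\mathcal{L} v + v\,\mathcal{L}\widetilde\eta + 2\mathfrak{Re}\bigl\{F^{i\bar j}\widetilde\eta_i v_{\bar j}\bigr\},\qquad v=(u-\varphi)_{x_n}.
\]
By \eqref{asym-sigma1} we have $|\widetilde\eta|\leq C|z|\leq C\delta$, and $|v|\leq C$ from \eqref{key-14-yuan3}, so choosing $\delta$ small makes the first two terms contribute at most $C\delta\sqrt{b_1}\sum f_i + C\delta\sum f_i|\lambda_i| + C\sum f_i$, all of the allowed form. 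The cross term is the real difficulty: splitting $v_{\bar j} = u_{n\bar j} + u_{\bar n\bar j}$ plus Christoffel corrections, the mixed-Hessian part $F^{i\bar j}\widetilde\eta_i u_{n\bar j}$ is bounded by $C\sum f_i|\lambda_i| + C\sqrt{b_1}\sum f_i$ directly (using $u_{n\bar j}=\mathfrak{g}_{n\bar j}-\chi_{n\bar j}$), while the pure-derivative piece $F^{i\bar j}\widetilde\eta_i u_{\bar n\bar j}$ is not controllable without the quadratic correction.

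For the quadratic piece, since $u_{y_n}-\varphi_{y_n}$ is real,
\[
\mathcal{L}\!\left(\frac{(u_{y_n}-\varphi_{y_n})^2}{\sqrt{b_1}}\right) = \frac{2(u_{y_n}-\varphi_{y_n})}{\sqrt{b_1}}\,\mathcal{L}(u_{y_n}-\varphi_{y_n}) + \frac{2}{\sqrt{b_1}}F^{i\bar j}(u_{y_n}-\varphi_{y_n})_i(u_{y_n}-\varphi_{y_n})_{\bar j}.
\]
Using $|u_{y_n}-\varphi_{y_n}|\leq C\sqrt{b_1}$ and applying \eqref{linear-1} to $\mathcal{L}(u_{y_n})$ shows the first term is bounded below by $-C\sqrt{b_1}\sum f_i - C\sum f_i|\lambda_i| - C$. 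The positive gradient term is the crucial gain: since $(u_{y_n})_j = \sqrt{-1}(u_{nj}-u_{j\bar n})$, the form $F^{i\bar j}(u_{y_n})_i(u_{y_n})_{\bar j}$ is a positive Hermitian quadratic in the combined pure/mixed derivatives that dominates a positive multiple of $F^{i\bar j}u_{ni}u_{\bar n\bar j}$ (up to $\varphi$-bounded correction). Applying Cauchy--Schwarz with balance $c=\sqrt{b_1}$,
\[
2\bigl|F^{i\bar j}\widetilde\eta_i u_{\bar n\bar j}\bigr| \;\leq\; \frac{1}{\sqrt{b_1}}\,F^{i\bar j}u_{ni}u_{\bar n\bar j} \;+\; \sqrt{b_1}\,F^{i\bar j}\widetilde\eta_i\widetilde\eta_{\bar j},
\]
and the trivial bound $F^{i\bar j}\widetilde\eta_i\widetilde\eta_{\bar j}\leq C\sum f_i$, the pure-derivative cross-term is absorbed into the positive gradient contribution, leaving only terms of the claimed size.

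The main obstacle is the last absorption step: verifying that the positive quadratic $\frac{2}{\sqrt{b_1}}F^{i\bar j}(u_{y_n})_i(u_{y_n})_{\bar j}$ arising from the correction genuinely controls $\frac{1}{\sqrt{b_1}}F^{i\bar j}u_{ni}u_{\bar n\bar j}$ after the cross-terms between pure and mixed complex derivatives are accounted for, and doing this with the precise $\sqrt{b_1}$-scaling that matches the Cauchy--Schwarz applied to $\widetilde\eta$ and $u_{\bar n\bar j}$. This is precisely what forces the normalization $1/\sqrt{b_1}$ in the definition \eqref{Phi-def1} of $\Phi$ and the factor $\sqrt{b_1}$ appearing in the conclusion.
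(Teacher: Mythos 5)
Your overall decomposition — $\mathcal{L}$ applied to the tangential part via \eqref{linear-1}, the normal correction via the product rule giving a bad cross term $F^{i\bar j}\widetilde\eta_i(u_{x_n})_{\bar j}$, and a compensating positive quadratic from $(u_{y_n}-\varphi_{y_n})^2$ — matches the paper's strategy, and your handling of the flat-boundary/constant-$\varphi$ refinement is correct. But there is a genuine gap in the absorption step.

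You split $(u_{x_n})_{\bar j}=u_{n\bar j}+u_{\bar n\bar j}$, control the mixed piece, then Cauchy--Schwarz the pure piece against $\widetilde\eta_i$ to produce $\frac{1}{\sqrt{b_1}}F^{i\bar j}u_{ni}u_{\bar n\bar j}$ and claim this is dominated by $\frac{2}{\sqrt{b_1}}F^{i\bar j}(u_{y_n})_i(u_{y_n})_{\bar j}$. That dominance is false. Since $(u_{y_n})_{\bar j}=\sqrt{-1}(u_{n\bar j}-u_{\bar n\bar j})$ (modulo Christoffel corrections), the form $F^{i\bar j}(u_{y_n})_i(u_{y_n})_{\bar j}$ is the $F$-square of the \emph{difference} of pure and mixed second derivatives: it vanishes identically whenever $u_{n\bar j}=u_{\bar n\bar j}$, regardless of how large $u_{\bar n\bar j}$ is, so it cannot control $F^{i\bar j}u_{ni}u_{\bar n\bar j}$ by any constant. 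The fix (and what the paper does implicitly) is to rewrite the pure derivative before applying Cauchy--Schwarz. Using $u_{\bar n\bar j}=u_{n\bar j}-\overline{\Gamma^l_{nj}}u_{\bar l}+\sqrt{-1}(u_{y_n})_{\bar j}$, one finds that the covariant corrections cancel and
\[
(u_{x_n})_{\bar j} \;=\; 2u_{n\bar j}+\sqrt{-1}(u_{y_n})_{\bar j}.
\]
Pairing $\widetilde\eta_i$ with $2u_{n\bar j}=2(\mathfrak{g}_{n\bar j}-\chi_{n\bar j})$ gives $C\sum f_i|\lambda_i|+C\sum f_i$ (diagonalize $F,\mathfrak{g}$ simultaneously and use $|\widetilde\eta_i|\le C$), and pairing $\widetilde\eta_i$ with $\sqrt{-1}(u_{y_n})_{\bar j}$ by Cauchy--Schwarz with balance $\sqrt{b_1}$ produces exactly $\frac{1}{\sqrt{b_1}}F^{i\bar j}(u_{y_n})_i(u_{y_n})_{\bar j}+C\sqrt{b_1}\sum f_i$, which is then cancelled by the good term from $\mathcal{L}((u_{y_n}-\varphi_{y_n})^2)$. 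Your version generates a quadratic that the correction term does not actually control; the rewriting must be done before the Cauchy--Schwarz, not after.
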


\begin{proof}
Together with 
\eqref{linear-1} and Cauchy-Schwarz inequality, one obtains
\begin{equation}
\label{bdy-g1}
\begin{aligned}
\mathcal{L}(\pm\mathcal{T}u)
 \geq \,&
 -C\sqrt{b_1}\sum_{i=1}^n f_i  
  - C\sum_{i=1}^n f_i|\lambda_i|
 -\frac{\gamma}{\sqrt{b_1}} F^{i\bar j}u_{y_n i}u_{y_n \bar j}-C|\nabla\psi|, \nonumber
 \end{aligned}
\end{equation}
\begin{equation}
\begin{aligned}
F^{i\bar j}(\widetilde{\eta})_i (u_{x_{n}})_{\bar j}
\leq \,& C\sum_{i=1}^nf_i|\lambda_i|+ \frac{1}{\sqrt{b_1}}F^{i\bar j}u_{y_n i} u_{y_n \bar j} +C\sqrt{b_1}\sum_{i=1}^n f_i,
\nonumber
\end{aligned}
\end{equation}
\begin{equation}
\label{yuan-hao2}
\begin{aligned}
\mathcal{L}((u_{y_{n}}-\varphi_{y_{n}})^2)
 \geq 
F^{i\bar j}u_{y_n i}u_{y_n \bar j}-C\left(1+|\nabla u|^2\right)   \sum_{i=1}^n f_i  
  - C|\nabla u|\sum_{i=1}^n f_i|\lambda_i| 
  - C\left(1+|\nabla u|\right). \nonumber
\end{aligned}
\end{equation}
Putting these inequalities together, we complete the proof.
\end{proof}

To estimate the quantitative boundary estimates for mixed derivatives, 
we should employ barrier function of the form
\begin{equation}
\label{barrier1}
\begin{aligned}
v= (\underline{u}-u)
- t\sigma
+N\sigma^{2}   \mbox{  in  } \Omega_{\delta},
\end{aligned}
\end{equation}
where $t$, $N$ are positive constants to be determined.

 Let $\delta>0$ and $t>0$ be sufficiently small with $N\delta-t\leq 0,$ so that in $\Omega_{\delta}$,    
 \begin{equation}
\begin{aligned}
v\leq 0, \mbox{  } \sigma \mbox{ is } C^2,
 \end{aligned}
\end{equation}
\begin{equation}
\label{bdy1}
\begin{aligned}
 \frac{1}{4} \leq |\nabla \sigma|\leq 2,  \mbox{  }
  |\mathcal{L}\sigma | \leq   C_2\sum_{i=1}^n f_i. 
\end{aligned}
\end{equation}
In addition, we can choose $\delta$ and $t$ small enough such that
\begin{equation}
\label{yuanbd-11}
\begin{aligned}
|2N\delta-t|\leq \min\left\{\frac{\varepsilon}{2C_{2}}, \frac{\beta}{16\sqrt{n}C_2} \right\},
\end{aligned}
\end{equation}
where $\beta:= \frac{1}{2}\min_{\bar M} dist(\nu_{\underline{\lambda} }, \partial \Gamma_n)$,
 $\varepsilon$ is the constant corresponding to $\beta$ in Lemma \ref{guan2014}, and $C_2$ is the constant in \eqref{bdy1}.

We construct in $\Omega_\delta$ the barrier function as follows:
\begin{equation}
\label{Psi}
\begin{aligned} 
\widetilde{\Psi} =A_1 \sqrt{b_1}v -A_2 \sqrt{b_1} |z|^2 + \frac{1}{\sqrt{b_1}} \sum_{\tau<n}|\widetilde{u}_{\tau}|^2+ A_3 \Phi,
\end{aligned}
\end{equation}
where and hereafter   we denote  $$\widetilde{u}=u-\varphi.$$

Similar to \cite[Proposition 2.19]{Guan12a} one has the following lemma.
\begin{lemma} 
\label{lemma5.4-yuan}
There is an index $r$ so that 
\begin{equation}
\label{L-u-2}
\begin{aligned}\sum_{\tau<n} F^{i\bar j}\mathfrak{g}_{\bar\tau i}\mathfrak{g}_{\tau \bar j}\geq   \frac{1}{2}\sum_{i\neq r} f_{i}\lambda_{i}^{2}. \nonumber
\end{aligned}
\end{equation}
\end{lemma}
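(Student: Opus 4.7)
The plan is to diagonalize $\mathfrak{g}$ at the point in question, rewrite the left-hand side in the eigenbasis, and then identify the index $r$ using the unitarity of the change-of-frame matrix.

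At the point under consideration, choose a unitary frame $\{v_1,\ldots,v_n\}$ that diagonalizes the Hermitian endomorphism associated to $\mathfrak{g}$, so that $\mathfrak{g}(v_k,J\bar v_l)=\lambda_k\delta_{kl}$. Because $F$ is defined through the symmetric function $f$ of the eigenvalues, in this same frame the Hermitian form $F^{i\bar j}$ becomes diagonal with entries $f_1,\ldots,f_n$. Now express the coordinate frame $\{e_1,\ldots,e_n\}$ in this eigenframe via a unitary matrix $U=(U^k_\tau)$, so that $e_\tau=\sum_k U^k_\tau v_k$.

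A direct computation then gives
\[
\sum_{\tau<n}F^{i\bar j}\mathfrak{g}_{\bar\tau i}\mathfrak{g}_{\tau\bar j}
=\sum_{\tau<n}\sum_k f_k\,|U^k_\tau|^2\,\lambda_k^2
=\sum_k f_k\,\lambda_k^2\,\bigl(1-|U^k_n|^2\bigr),
\]
where the last step uses that each row of $U$ has unit norm, $\sum_{\tau=1}^n|U^k_\tau|^2=1$. Since every summand is nonnegative (as $f_k>0$), it remains to locate an index $r$ for which $1-|U^k_n|^2\geq\tfrac12$ whenever $k\neq r$. Here the plan is to use unitarity of the normal column, $\sum_k|U^k_n|^2=1$: if some index satisfies $|U^r_n|^2\geq\tfrac12$ take that as $r$, and then $|U^k_n|^2\leq\tfrac12$ for every $k\neq r$; otherwise $|U^k_n|^2<\tfrac12$ for all $k$ and any choice of $r$ works. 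In either case
\[
\sum_k f_k\,\lambda_k^2\,(1-|U^k_n|^2)\;\geq\;\sum_{k\neq r}f_k\,\lambda_k^2\,(1-|U^k_n|^2)\;\geq\;\frac12\sum_{k\neq r}f_k\,\lambda_k^2,
\]
which is the asserted inequality.

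There is no genuine analytic obstacle; the whole statement is pure linear algebra once one commits to the eigenbasis. The only point requiring a little care is that the tangential subspace and the eigenspaces of $\mathfrak{g}$ are not aligned, which is precisely why one extra index $r$ (the ``almost-normal'' eigendirection) must be excluded on the right-hand side. Controlling that loss amounts to the elementary dichotomy on $\{|U^k_n|^2\}$ described above.
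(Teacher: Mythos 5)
Your argument is correct and follows essentially the same route as the paper: simultaneously diagonalize $(F^{i\bar j})$ and $(\mathfrak{g}_{i\bar j})$ with a unitary $U$, compute $\sum_{\tau<n}F^{i\bar j}\mathfrak{g}_{\bar\tau i}\mathfrak{g}_{\tau\bar j}=\sum_k f_k\lambda_k^2(1-|U^k_n|^2)$, and use that the $n$-th column of $U$ has unit norm. The only difference is cosmetic: the paper ends with $\geq \tfrac12\sum_{|a_{in}|^2\leq 1/2}f_i\lambda_i^2$ and leaves implicit that at most one index can have $|a_{in}|^2>1/2$, whereas you spell out that dichotomy and thereby identify the excluded index $r$ explicitly.
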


\begin{proof}
Let $U=(a_{ij})$ be a $n\times n$ unitary matrix that simultaneously diagonalizes 
$(F^{i\bar j})$ and $(\mathfrak{g}_{i\bar j})$ at a fixed point. That is
\begin{equation}
\begin{aligned}
\left(F^{i\bar j}\right)=U^*\mathrm{diag}(f_1,\cdots,f_n)U, \mbox{  } \left(\mathfrak{g}_{i\bar j}\right)=U^*\mathrm{diag}(\lambda_1,\cdots,\lambda_n)U. \nonumber
\end{aligned}
\end{equation}
Here $U^*=(b_{ij})$, $b_{ij}=\overline{a_{ji}}$. Since $U$ is unitary, $U^*=U^{-1}$.
Thus $\left(\mathfrak{g}_{i\bar j}\right)\cdot (F^{i\bar j} )\cdot\left(\mathfrak{g}_{i\bar j}\right)=U^*\mathrm{diag}(f_1\lambda_1^2,\cdots,f_n\lambda_n^2)U.$
For any fixed $1\leq\tau\leq n$, we have
\begin{equation}
\begin{aligned}
\sum_{i,j=1}^n F^{i\bar j}\mathfrak{g}_{i\bar \tau}\mathfrak{g}_{\tau\bar j}
=\sum_{i=1}^n f_i\lambda_i^2|a_{i\tau}|^2. \nonumber
\end{aligned}
\end{equation}
Consequently, 
\begin{equation}
\begin{aligned}
\sum_{\tau=1}^{n-1}\sum_{i,j=1}^n F^{i\bar j}\mathfrak{g}_{i\bar \tau}\mathfrak{g}_{\tau\bar j}
=\sum_{i=1}^n f_i\lambda_i^2\left(1-|a_{in}|^2\right)\geq \frac{1}{2} \sum_{|a_{in}|^2\leq\frac{1}{2}}f_i\lambda_i^2 \nonumber
\end{aligned}
\end{equation}
 as required. 
\end{proof}

    \begin{proof}
[Proof of Proposition \ref{mix-general}]

If $A_2\gg A_3\gg1$ then one has  $\widetilde{\Psi}\leq 0 \mbox{ on } \partial \Omega_\delta$, here we use  \eqref{bdr-t}.
Note $\widetilde{\Psi}(p_0)=0$.  
It suffices  to prove $$\mathcal{L}\widetilde{\Psi}\geq 0 \mbox{ on } \Omega_\delta,$$
which yields $\widetilde{\Psi}\leq 0 $ in $\Omega_{\delta}$, and then $(\nabla_\nu \widetilde{\Psi})(p_0)\leq 0$.

 By a direct computation one has
\begin{equation}
\label{L-v}
\begin{aligned}
\mathcal{L}v\geq F^{i\bar j} (\mathfrak{\underline{g}}_{i\bar j}-\mathfrak{g}_{i\bar j})-C_2 |2N\sigma-t|\sum_{i=1}^n f_i +2N F^{i\bar j}\sigma_i \sigma_{\bar j}. \nonumber
\end{aligned}
\end{equation}

Applying \cite[Lemma 6.2]{CNS3}, 
 with a proper permutation
of $\underline{\lambda}$ if necessary, one obtains
 \begin{equation}
\begin{aligned}
F^{i\bar j}   \mathfrak{\underline{g}}_{i\bar j}=F^{i\bar j}(\mathfrak{g})  \mathfrak{\underline{g}}_{i\bar j} \geq \sum_{i=1}^n f_i(\lambda) \underline{\lambda}_i=\sum_{i=1}^n f_i  \underline{\lambda}_i. \nonumber
 \end{aligned}
\end{equation}
Since $F^{i\bar j}   \mathfrak{g}_{i\bar j}=\sum_{i=1}^n f_i\lambda_i$, we have
\begin{equation}\begin{aligned}
F^{i\bar j} (\mathfrak{\underline{g}}_{i\bar j}-\mathfrak{g}_{i\bar j}) \geq \sum_{i=1}^n f_i(\underline{\lambda}_i-\lambda_i). \nonumber
 \end{aligned}\end{equation}

Together with Lemma \ref{lemma5.4-yuan}, some straightforward computations yield
\begin{equation}
\begin{aligned}
\mathcal{L} \left(\sum_{\tau<n}|\widetilde{u}_{\tau}|^2 \right)
\geq \,&
 \frac{1}{2}\sum_{\tau<n}F^{i\bar j} \mathfrak{g}_{\bar \tau i} \mathfrak{g}_{\tau \bar j}
   -C_1'\sqrt{b_1} \sum_{i=1}^n f_{i}|\lambda_{i}|  -C_1' b_1\sum_{i=1}^n f_{i} -C_1'\sqrt{ b_1} \\ 
\geq \,&
 \frac{1}{4}\sum_{i\neq r} f_{i}\lambda_{i}^{2}
   -C_1'\sqrt{b_1} \sum_{i=1}^n  f_{i}|\lambda_{i}|-C_1' b_1 \sum_{i=1}^n f_{i} -C_1'\sqrt{ b_1}. \nonumber
\end{aligned}
\end{equation}


According to Lemma \ref{lemma3.4} and $\sum_{i=1}^n f_i(\underline{\lambda}_i-\lambda_i) \geq 0$,
 we obtain  the following inequalities respectively
\begin{equation}\begin{aligned}
\sum_{i=1}^n f_i |\lambda_i| =2\sum_{\lambda_i\geq 0} f_i\lambda_i -\sum_{i=1}^n f_{i} \lambda_i < 2\sum_{\lambda_i\geq 0} f_i\lambda_i, \nonumber
\end{aligned}\end{equation}
 \begin{equation} \label{inequ-1}\begin{aligned}
\sum_{i=1}^n f_i |\lambda_i| = \sum_{i=1}^n f_i\lambda_i -2\sum_{\lambda_i<0} f_{i} \lambda_i \leq \sum_{i=1}^n f_i\underline{\lambda}_i-2\sum_{\lambda_i<0} f_{i} \lambda_i. \nonumber
\end{aligned}\end{equation}
In conclusion, combining with Cauchy-Schwarz inequality, we have
 \begin{equation}
 \label{flambda}
\begin{aligned}
\sum_{i=1}^n f_i |\lambda_i|
\leq  \frac{\epsilon}{4\sqrt{b_1}}\sum_{i\neq r} f_i\lambda_i^2 +\left(\sup_{\bar M}|\underline{\lambda}|+\frac{4\sqrt{b_1}}{\epsilon}\right)\sum_{i=1}^n f_i. \nonumber
\end{aligned}
\end{equation}
  
Taking $\epsilon=\frac{1}{C_1'+A_3C_\Phi }$, and
putting the above inequalities together we have
\begin{equation}
\label{bdy-main-inequality}
\begin{aligned}
\mathcal{L}\widetilde{\Psi} \geq \,&
A_1 \sqrt{b_1} \sum_{i=1}^n f_{i}(\underline{\lambda}_i-\lambda_i)
+ 2A_1N \sqrt{b_1} F^{i\bar j}\sigma_i \sigma_{\bar j}
\\ \,&
   -  \{ C_1'+ A_2+A_3C_\Phi  +A_1C_2  |2N\sigma-t|
  +4(C_1'+A_3C_\Phi)^2   
   \\
  \,&
     +(C_1'+A_3C_\Phi) \sup_{\bar M}|\underline{\lambda}| /{\sqrt{b_1}}\} \sqrt{b_1}\sum_{i=1}^n f_i
 -(C_1' +A_3 C_\Phi).
\end{aligned}
\end{equation}

Let's take $\beta= \frac{1}{2}\min_{\bar M} dist(\nu_{\underline{\lambda} }, \partial \Gamma_n)$ as above, and
let $\varepsilon$ be the positive constant in Lemma \ref{guan2014} accordingly.

{\bf Case I}: If $|\nu_{\lambda }-\nu_{\underline{\lambda} }|\geq \beta$,   then by Lemma \ref{guan2014}
 we have
\begin{equation}
\label{guan-key1}
\begin{aligned}
\sum_{i=1}^n f_{i}(\underline{\lambda}_i-\lambda_i)  \geq \varepsilon  \left(1+\sum_{i=1}^n f_i\right).
\end{aligned}
\end{equation}
Note that \eqref{yuanbd-11} implies $A_1C_2 |2N\sigma-t|\leq \frac{1}{2}A_1 \varepsilon$. Taking $A_1\gg 1$ we derive $$\mathcal{L}\widetilde{\Psi} \geq 0 \mbox{ on } \Omega_\delta.$$

{\bf Case II}: Suppose that $|\nu_{\lambda }-\nu_{\underline{\lambda} }|<\beta$ and therefore
$\nu_{\lambda }-\beta \vec{\bf 1} \in \Gamma_{n}$ and
\begin{equation}
\label{2nd-case1}
\begin{aligned}
f_{i} \geq  \frac{\beta }{\sqrt{n}} \sum_{j=1}^n f_{j}. 
\end{aligned}
\end{equation}
By \eqref{bdy1}, we have $|\nabla \sigma|\geq\frac{1}{4}$ in $\Omega_\delta$, then 
\begin{equation}
\label{bbvvv}
\begin{aligned}
A_1N \sqrt{b_1} F^{i\bar j}\sigma_i \sigma_{\bar j} \geq \frac{A_1N \beta \sqrt{b_1}}{16\sqrt{n}}\sum_{i=1}^n f_i  \mbox{ on } \Omega_\delta.
\end{aligned}
\end{equation}
This term can control 
all the bad terms containing $\sum_{i=1}^n f_i$ in \eqref{bdy-main-inequality}.
On the other hand,   $\mathcal{L}(\underline{u}-u)\geq 0$ and
the bad term  $-(C_1' +A_3 C_\Phi)$ in the last term of \eqref{bdy-main-inequality}
can be dominated by combining \eqref{bbvvv} with \eqref{sumfi1}.
Thus  $$\mathcal{L}(\widetilde{\Psi}) \geq 0 \mbox{ on }\Omega_\delta, \mbox{ if } A_1N\gg 1.$$
\end{proof}
   
In the case when $\partial M$ is 
 {holomorphically flat} and $\varphi$ is a constant, $\Phi= \pm{D}u$ in \eqref{Phi-def1} and the local barrier function  in \eqref{Psi} reads as follows
 		$$\widetilde{\Psi} =A_1 \sqrt{b_1}v -A_2 \sqrt{b_1} |z|^2 + \frac{1}{\sqrt{b_1}} \sum_{\tau<n}|{u}_{\tau}|^2\pm A_3 {D}u,$$ 
 		where $D$ is given in \eqref{tangential-oper-Leviflat1}.
So we have slightly delicate results.
\begin{proposition}
\label{mix-Leviflat}
Suppose, in addition to \eqref{elliptic}-\eqref{addistruc}, 
 \eqref{existenceofsubsolution}, \eqref{nondegenerate}  
  and $\psi\in C^1(\bar M)$, that $\partial M$ is \textit{holomorphically flat} and the boundary data $\varphi$ is of a constant.
Then 
for any $T\in  T_{\partial M}  \cap J T_{\partial M}  \mbox{ with } |T|=1$,
the admissible solution $u\in C^3(M)\cap C^2(\bar M)$ to the Dirichlet problem satisfies
\begin{equation}
\label{quanti-mix-derivative-1}
|\nabla^2 u(T,\nu)|\leq C 
\left(1+\sup_{M}|\nabla u|\right) \nonumber
\end{equation}
where $C$ depends on 
$|\psi|_{C^{1}(\bar M)}$, $|\underline{u}|_{C^{2}(\bar M)}$,
$\partial M$ 
up to second derivatives
and other known data (but neither on $(\delta_{\psi,f})^{-1}$ nor on $\sup_{M}|\nabla u|$). 
\end{proposition}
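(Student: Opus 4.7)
The plan is to re-run the argument of Proposition~\ref{mix-general} in the simplified setting where the $\gamma$-term in the tangential operator~\eqref{tangential-operator123} drops out. In local holomorphic coordinates~\eqref{holomorphic-coordinate-flat} at $p_{0}\in\partial M$ we have $\partial M=\{\mathfrak{Re}(z_{n})=0\}$, and the requirement $T\in T_{\partial M}\cap J\,T_{\partial M}$, $|T|=1$, forces $T$ to be a real unit combination of the $2n-2$ vectors $\partial/\partial x_{\alpha},\partial/\partial y_{\alpha}$ with $\alpha<n$. Hence it suffices to bound $|\nabla^{2}u(D,\nu)(p_{0})|$ for $D$ ranging over the operators in~\eqref{tangential-oper-Leviflat1}.

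Two simplifications occur under the present hypotheses. First, because $\partial M$ is holomorphically flat we may take $\gamma=0$, so the auxiliary tangential correction $\widetilde{\eta}\nabla_{\partial/\partial x_{n}}$ vanishes and $\mathcal{T}=D$; consequently $\Phi$ in~\eqref{Phi-def1} reduces to $\pm Du$ and the quadratic $(u_{y_{n}}-\varphi_{y_{n}})^{2}/\sqrt{b_{1}}$ is absent. Second, since $\varphi\equiv\mathrm{const}$, $Du\equiv 0$ on $\partial M$ and no derivative of $\varphi$ of order $\geq 1$ appears anywhere; the constant $C_{\Phi}$ of Lemma~\ref{yuan-key0} then depends only on $|\chi|_{C^{1}(\bar M)}$ and $|\nabla\psi|_{C^{0}(\bar M)}$, and $\sigma$ enters the whole construction only through second derivatives via $v$ in~\eqref{barrier1}, which is exactly why one gets dependence on $\partial M$ up to second derivatives rather than third.

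Next I would form the barrier
\[
\widetilde{\Psi}=A_{1}\sqrt{b_{1}}\,v-A_{2}\sqrt{b_{1}}\,|z|^{2}+\frac{1}{\sqrt{b_{1}}}\sum_{\tau<n}|u_{\tau}|^{2}\pm A_{3}\,Du
\]
on $\Omega_{\delta}$, with $v=(\underline{u}-u)-t\sigma+N\sigma^{2}$. Since $Du\equiv 0$ and $v\leq 0$ on $\partial M\cap\bar{\Omega}_{\delta}$, taking $A_{2}\gg A_{3}\gg 1$ forces $\widetilde{\Psi}\leq 0$ on $\partial\Omega_{\delta}$, while $\widetilde{\Psi}(p_{0})=0$. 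The inequality $\mathcal{L}\widetilde{\Psi}\geq 0$ on $\Omega_{\delta}$ is obtained by the same dichotomy as in the proof of Proposition~\ref{mix-general}: when $|\nu_{\lambda}-\nu_{\underline{\lambda}}|\geq\beta$, Lemma~\ref{guan2014} combined with~\eqref{yuanbd-11} and the choice $A_{1}\gg 1$ absorbs all bad terms; when $|\nu_{\lambda}-\nu_{\underline{\lambda}}|<\beta$, the coercivity $F^{i\bar j}\sigma_{i}\sigma_{\bar j}\geq (\beta/16\sqrt{n})\sum f_{i}$ as in~\eqref{2nd-case1}--\eqref{bbvvv} together with~\eqref{sumfi1} closes the argument once $A_{1}N\gg 1$.

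The maximum principle then yields $\widetilde{\Psi}\leq 0$ on $\Omega_{\delta}$, and since $\widetilde{\Psi}(p_{0})=0$, one gets $\nabla_{\nu}\widetilde{\Psi}(p_{0})\leq 0$. At $p_{0}$ we have $\nabla_{\nu}|z|^{2}=0$, and $u_{\tau}(p_{0})=0$ for $\tau<n$ (since $u=\varphi\equiv\mathrm{const}$ on $\partial M$ and $\partial/\partial z_{\tau}$ is tangential there), so $\nabla_{\nu}\sum|u_{\tau}|^{2}(p_{0})=0$; whence $|A_{3}\nabla_{\nu}(Du)(p_{0})|\leq A_{1}\sqrt{b_{1}}\,|\nabla_{\nu}v(p_{0})|\leq C A_{1}\sqrt{b_{1}}$, giving the desired bound $|\nabla^{2}u(T,\nu)(p_{0})|\leq C(1+\sup_{M}|\nabla u|)$. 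The only real obstacle is the constant bookkeeping flagged in the second paragraph, namely checking that the hypotheses $\gamma=0$ and $\varphi\equiv\mathrm{const}$ truly remove every appearance of $|\varphi|_{C^{3}}$ and of third derivatives of $\partial M$; this is automatic because $D$ carries no $\sigma$-dependence, and otherwise the proof is a direct specialization of that of Proposition~\ref{mix-general}.
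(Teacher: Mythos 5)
Your proposal is correct and follows essentially the same route as the paper. The paper's own treatment of Proposition~\ref{mix-Leviflat} is exactly the observation preceding it: with $\partial M$ holomorphically flat one takes $\gamma=0$ in~\eqref{tangential-operator123}, so $\Phi=\pm Du$ in~\eqref{Phi-def1} and the barrier reduces to $\widetilde{\Psi}=A_1\sqrt{b_1}v-A_2\sqrt{b_1}|z|^2+\frac{1}{\sqrt{b_1}}\sum_{\tau<n}|u_\tau|^2\pm A_3 Du$, after which the argument of Proposition~\ref{mix-general} goes through unchanged; your spelled-out version — including the bookkeeping that $\sigma$ now appears only to second order through $v$ and that $\varphi\equiv\mathrm{const}$ removes the $|\varphi|_{C^3}$ dependence — matches the paper's reasoning.
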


Together with Proposition \ref{proposition-quar-yuan1}, we obtain the following theorem.
 \begin{theorem}
 \label{thm2-bdy-leviflat}
 Suppose, in addition to \eqref{elliptic}-\eqref{addistruc},
 \eqref{existenceofsubsolution}, 
 \eqref{nondegenerate}  
  and $\psi\in C^1(\bar M)$, that $\partial M$ is \textit{holomorphically flat} and the boundary data $\varphi$ is of a constant.
Then for any admissible solution $u\in C^3(M)\cap C^2(\bar M)$ to Dirichlet problem \eqref{mainequ}, we have
\[\sup _{\partial M} \Delta u \leqslant C\left(1+\sup _{M}|\nabla u|^{2}\right).\] 
Here $C$ is a uniform positive constant depending only on 
 $|\psi|_{C^{1}(\bar M)}$, $|\nabla u|_{C^0(\partial M)}$, 
$|\underline{u}|_{C^{2}(\bar M)}$, $\partial M$
up to second derivatives 
and other known data (but neither on $\sup_{M}|\nabla u|$ nor on $(\delta_{\psi,f})^{-1}$).

 \end{theorem}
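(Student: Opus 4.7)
The plan is to assemble the theorem as a direct combination of Propositions \ref{proposition-quar-yuan1} and \ref{mix-Leviflat}, after first verifying that the holomorphic flatness of $\partial M$ supplies the Levi-form hypothesis \eqref{bdry-assumption1} needed for the double-normal estimate and handles the tangential-tangential part for free.

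Fix $p_0\in\partial M$ and choose holomorphic coordinates \eqref{holomorphic-coordinate-flat} as in Section \ref{Tangential-opera-1}, in which $\partial M = \{\mathfrak{Re}(z_n)=0\}$ locally and $g_{i\bar j}(p_0)=\delta_{ij}$. Pick the orthonormal frame $\xi_1,\dots,\xi_{n-1}$ of $T^{1,0}_{\partial M}$ and $\xi_n$ as in \eqref{xi-alpha}-\eqref{xi-n}. In these coordinates the Levi form of $\partial M$ vanishes identically at $p_0$, so in the notation of Theorem \ref{thm1-2} one has $(-\kappa_1,\dots,-\kappa_{n-1})=(0,\dots,0)$. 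Since $\Gamma\supset\Gamma_n$ implies $\Gamma_\infty\supset\Gamma_{n-1}$, the origin lies in $\overline{\Gamma}_\infty$, so the boundary hypothesis \eqref{bdry-assumption1} is satisfied at every boundary point.

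Next I would control the three types of second-order terms entering $\Delta u$ separately. Because $\varphi$ is constant and the boundary is locally $\{\mathfrak{Re}(z_n)=0\}$, the tangential-tangential mixed derivatives on $\partial M$ are determined by differentiating $u|_{\partial M}\equiv\varphi$ twice tangentially: one gets $u_{\alpha\bar\beta}(p_0)=0$ for $1\leq\alpha,\beta\leq n-1$, so $\mathfrak{g}(\xi_\alpha,J\bar\xi_\beta)(p_0)=\chi_{\alpha\bar\beta}(p_0)$ is controlled by the background data alone. For the tangential-normal block, I would invoke Proposition \ref{mix-Leviflat}, whose hypotheses are exactly the present ones, to obtain
\[
|\mathfrak{g}(\xi_\alpha,J\bar\xi_n)(p_0)|\leq C\bigl(1+\sup_M|\nabla u|\bigr),\qquad 1\leq\alpha\leq n-1.
\]
For the double-normal term, I would apply Proposition \ref{proposition-quar-yuan1}, whose hypotheses have just been verified, to get
\[
\mathfrak{g}(\xi_n,J\bar\xi_n)(p_0)\leq C\Bigl(1+\sum_{\alpha=1}^{n-1}|\mathfrak{g}(\xi_\alpha,J\bar\xi_n)(p_0)|^2\Bigr).
\]

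Finally, combining the three bounds and substituting into
\[
\Delta u(p_0)=\sum_{i=1}^n\mathfrak{g}(\xi_i,J\bar\xi_i)(p_0)-\mathrm{tr}_\omega\chi(p_0),
\]
the tangential-tangential terms are $O(1)$, the tangential-normal squares are $O(1+\sup_M|\nabla u|^2)$, and the double-normal term is then also $O(1+\sup_M|\nabla u|^2)$. Taking the supremum over $p_0\in\partial M$ yields the desired estimate. There is no real obstacle here beyond checking that the Levi-form condition is automatic from holomorphic flatness; the estimate is essentially a bookkeeping consolidation of the two propositions, exploiting that the constants in Proposition \ref{mix-Leviflat} and Proposition \ref{proposition-quar-yuan1} are independent of both $\sup_M|\nabla u|$ and $(\delta_{\psi,f})^{-1}$, which is why the same independence propagates to $C$ in the conclusion.
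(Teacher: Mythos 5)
Your proposal matches the paper's own proof, which is simply the one-line combination ``Together with Proposition~\ref{proposition-quar-yuan1}, we obtain the following theorem'' immediately after Proposition~\ref{mix-Leviflat}; you have correctly identified and unpacked this combination, including the observation that holomorphic flatness of $\partial M$ forces the Levi form to vanish so that~\eqref{bdry-assumption1} is automatic. One small point worth making explicit: Proposition~\ref{mix-Leviflat} bounds the \emph{real} Hessian $|\nabla^2 u(T,\nu)|$ for $T\in T_{\partial M}\cap J T_{\partial M}$, whereas you use a bound on the \emph{complex} mixed entry $\mathfrak{g}(\xi_\alpha,J\bar\xi_n)$; the passage is legitimate because $u_{\alpha\bar n}$ decomposes into $u_{x_\alpha x_n},u_{y_\alpha x_n}$ (covered by the proposition) together with the tangential-tangential pieces $u_{x_\alpha y_n},u_{y_\alpha y_n}$, and the latter vanish at $p_0$ precisely because $\varphi$ is constant and $\partial M$ is locally $\{\mathfrak{Re}(z_n)=0\}$ — the same reasoning you used for $u_{\alpha\bar\beta}(p_0)=0$.
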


\section{Solving equations}
\label{solvingequation}

\subsection{Completion of the proof of Theorems \ref{thm1-1} and \ref{thm1-2}}
The following  second order estimate is essentially due to Sz\'{e}kelyhidi \cite{Gabor}.

\begin{theorem}
	\label{globalsecond}
	Suppose, in addition to \eqref{elliptic}, \eqref{concave}, \eqref{addistruc-0} and \eqref{nondegenerate},
	that there is an admissible subsolution $\underline{u}\in C^{2}(\bar M)$.
	Then for any admissible solution $u\in C^{4}(M)\cap C^{2}(\bar M)$ of
	Dirichlet problem \eqref{mainequ} with $\psi\in C^2(M)\cap C^{1,1}(\bar M)$,
	there exists a uniform positive constant  $C$ depending only on
	$|u|_{C^{0}(\bar M)}$, $|\psi|_{C^{2} (\bar M)}$, $|\underline{u}|_{C^{2}(\bar M)}$,
	$|\chi|_{C^{2}(\bar M)}$  and other known data such that
	\begin{equation}
		\label{2-se-global-Gabor}
		\begin{aligned}
			\sup_{M}|\partial\overline{\partial} u|
			\leq C \left(1+\sup_{M}|\nabla u|^{2} +
			\sup_{\partial M}|
			\partial\overline{\partial} u|\right).
		\end{aligned}
	\end{equation}
	
\end{theorem}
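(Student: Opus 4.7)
My plan is to prove this second-order estimate following Sz\'ekelyhidi's strategy. Since $\Gamma\subset\Gamma_n\subset\Gamma_1$, the smallest eigenvalue of $\mathfrak{g}[u]$ is bounded below by $-\mathrm{tr}_{\omega}\chi$, so it suffices to bound the largest eigenvalue $\lambda_1:=\lambda_1(\mathfrak{g}[u])$ from above by $C(1+\sup_M|\nabla u|^2 + \sup_{\partial M}|\partial\overline{\partial}u|)$. To this end I would apply the maximum principle to a test function of the form
\[
W = \log \lambda_1 + \phi(|\nabla u|^2) + N(\underline{u}-u),
\]
with a standard choice such as $\phi(s) = -\tfrac12\log(K-s)$, $K = 2\sup_M|\nabla u|^2+1$, and a large constant $N$ to be fixed at the end of the argument. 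If the maximum of $W$ lies on $\partial M$, then $\sup_{\partial M}|\partial\overline{\partial}u|$ directly absorbs it; the delicate case is an interior maximum at some $x_0\in M$.

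At $x_0$, I would first address the non-smoothness of $\lambda_1$ coming from possible multiplicity by the usual perturbation trick: choose local holomorphic coordinates so that $(\mathfrak{g}_{i\bar j}(x_0))$ is diagonal with entries $\mathfrak{g}_{1\bar 1}\geq\cdots\geq\mathfrak{g}_{n\bar n}$, then replace $\lambda_1$ by the largest eigenvalue of a perturbed matrix $\mathfrak{g}_{i\bar j}-B_{i\bar j}$ for a suitable constant Hermitian matrix $B$ vanishing in the $(1,1)$ slot; this quantity is smooth near $x_0$ and still equals $\lambda_1$ there. Imposing $\nabla W(x_0)=0$ and $\mathcal{L}W(x_0)\leq 0$ with $\mathcal{L}=F^{i\bar j}\partial_i\partial_{\bar j}$, double differentiation produces three families of terms: a third-order contribution to be controlled via the critical equations, the combined concavity and Andrews--Gerhardt term
\[
-\sum_{i,j,r,s} F^{i\bar j,r\bar s}\nabla_1\mathfrak{g}_{i\bar j}\,\nabla_{\bar 1}\mathfrak{g}_{r\bar s}
+ 2\sum_{p>1}\frac{F^{p\bar p}\,|\nabla_1\mathfrak{g}_{p\bar 1}|^2}{\lambda_1-\lambda_p}
\]
which provides nonnegative quantities after cancellation with the bad third-order terms, and finally terms from $N\mathcal{L}(\underline u-u)$, from curvature and torsion of $\omega$, and from the first derivatives of $\psi$.

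Next I would split into two cases by Guan's dichotomy (Lemma \ref{guan2014}) applied with $\beta=\tfrac12\min_{\bar M}\mathrm{dist}(\nu_{\underline\lambda},\partial\Gamma_n)$. In Case I, $|\nu_\lambda-\nu_{\underline\lambda}|\geq\beta$, Lemma \ref{guan2014} yields $\sum_i f_i(\underline\lambda_i-\lambda_i)\geq\varepsilon(1+\sum_i f_i)$, so taking $N$ sufficiently large makes $N\mathcal{L}(\underline u-u)$ dominate all bad terms. In Case II, $\nu_\lambda$ lies within $\beta$ of $\nu_{\underline\lambda}\in\Gamma_n$, which forces $f_i\geq c\sum_j f_j$ for every $i$; the equation is then uniformly elliptic at $x_0$ and the negative contribution $\phi''|\nabla u|^2 F^{i\bar j}u_i u_{\bar j}$ together with the Andrews--Gerhardt quantity handles the remaining third-order terms in the standard way. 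Hypothesis \eqref{addistruc-0}, equivalent to \eqref{addistruc} by Lemma \ref{lemma3.4}, is used essentially: it guarantees $\sum_i f_i(\underline\lambda_i-\lambda_i)\geq 0$ so that the subsolution acts as a genuine one-sided barrier, and it ensures $\sum_i f_i$ is bounded below on each level set $\partial\Gamma^\sigma$.

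The main obstacle I anticipate is the bookkeeping of the extra first-order torsion terms $T^p_{ij}$ that appear whenever one commutes covariant derivatives in the non-K\"ahler setting; these must be absorbed into the Andrews--Gerhardt positive quantity by completing the square, or into the gradient term via $\phi'(|\nabla u|^2)$. A secondary subtlety is choosing the parameters in the correct order, namely $\phi$ first (to produce the crucial negative $\phi''|\nabla u|^2 F^{i\bar j}u_i u_{\bar j}$ that kills mixed third-order terms), and only afterwards $N$ large enough to win in Case I without disturbing Case II. Combining the interior and boundary cases then yields $\lambda_1(x_0)\leq C(1+\sup_M|\nabla u|^2+\sup_{\partial M}|\partial\overline\partial u|)$, which is the desired estimate \eqref{2-se-global-Gabor}.
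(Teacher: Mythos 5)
Your proposal is correct and takes essentially the same approach the paper intends: the paper gives no independent proof, remarking only that the estimate is ``essentially due to Sz\'ekelyhidi'' and that one follows \cite[Proposition~13]{Gabor} with Lemma~\ref{guan2014} substituted for Sz\'ekelyhidi's $\mathcal{C}$-subsolution dichotomy, which is exactly the strategy you outline (test function with $N(\underline u-u)$, eigenvalue perturbation, Andrews--Gerhardt term, two-case split, torsion bookkeeping). Two minor slips are worth correcting: the inclusion should read $\Gamma_n\subset\Gamma\subset\Gamma_1$, not $\Gamma\subset\Gamma_n\subset\Gamma_1$; and the inequality $\sum_i f_i(\underline\lambda_i-\lambda_i)\geq 0$ is already a consequence of concavity \eqref{concavity1} together with the subsolution inequality and does not rely on \eqref{addistruc-0}---indeed the remark following the theorem observes that \eqref{addistruc-0} is dispensable for this particular estimate.
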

\begin{remark}
 	Following the outline of proof of  \cite[Proposition 13]{Gabor},	 using  Lemma \ref{guan2014} 
 	in place of \cite[Proposition 6]{Gabor},   
 	we can check that Sz\'{e}kelyhidi's second order estimate still holds for the Dirichlet problem without assuming  \eqref{addistruc-0}.
	Moreover, one can further verify that the constant $C$ in \eqref{2-se-global-Gabor} does not depend on $(\delta_{\psi, f})^{-1}$.
\end{remark}

The existence results follow from the standard continuity method and the above estimates.
We assume $\psi$, $\underline{u}\in C^{\infty}(\bar M)$.
The  general case of $\underline{u}\in C^{3}(\bar M)$ and $\psi\in C^{k,\alpha}(\bar M)$ follows by approximation  process.

Let's consider a family of Dirichlet problems as follows:
\begin{equation}
	\label{conti}
	\begin{aligned}
		F(\mathfrak{g}[u^{t}]) = (1-t)F(\mathfrak{g}[\underline{u}])+t\psi  \mbox{ in } M,  \quad
		u^{t}  =\varphi  \mbox{ on } \partial  M.
	\end{aligned}
\end{equation}
We set $$I=\left\{t\in [0,1]: \mbox{ there exists } u^{t}\in C^{4,\alpha}(\bar M) \mbox{ solving equation } \eqref{conti}\right\}.$$
Clearly $0\in I$ by taking $u^{0}=\underline{u}$.
The openness of $I$ is follows from the implicit function theorem and the estimates.

We can verify that $\underline{u}$ is the {admissible} subsolution along the whole method of continuity.
Combining Theorem \ref{thm1-bdy} with Theorem \ref{globalsecond} and Lemma \ref{lemma-c0-bc1},
we can conclude that there exists a uniform positive constant $C$
such that
\begin{equation}
	\begin{aligned}
		\sup_{M}|\Delta u^{t}| \leq C\left(1+\sup_{M} |\nabla u^{t}|^2\right). \nonumber
	\end{aligned}
\end{equation}
One thus applies the blow-up argument used  in \cite{Gabor}, extending that of \cite{Chen,Dinew2017Kolo},  to
derive the gradient estimate,
and so $|\partial\overline{\partial} u^t|$ has a uniform  bound.
Applying Evans-Krylov theorem \cite{Evans82,Krylov83},
adapted to the complex setting (cf.  \cite{TWWYEvansKrylov2015}),
and Schauder theory,  one obtains the required higher $C^{k,\alpha}$ regularities.
Consequently, $I$ is closed. 

Therefore, $I=[0,1]$. The proof is complete.


\subsection{Completion of the proof of Theorem \ref{thm2-diri-de}}
Notice that  in Theorem \ref{thm2-diri-de} we impose condition \eqref{continuity1}  rather than  \eqref{addistruc}. Therefore, in order to apply Theorems \ref{thm1-2} and \ref{thm1-bdy}, it requires to prove
\begin{lemma}
	\label{lemma1-con-addi}
	Suppose \eqref{elliptic}, \eqref{concave} and \eqref{continuity1} hold. Then  $f$ satisfies \eqref{addistruc}.
\end{lemma}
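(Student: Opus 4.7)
The plan is to use the equivalences of Lemma~\ref{lemma3.4}. Since \eqref{elliptic} and \eqref{concave} are assumed, that lemma tells us that \eqref{addistruc} is equivalent to the pointwise inequality \eqref{addistruc-3}, namely $\sum_{i=1}^n f_i(\lambda)\,\lambda_i\geq 0$ for every $\lambda\in\Gamma$; it therefore suffices to establish this reformulation.

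The first step is to observe that under \eqref{continuity1}, the vertex $0\in\partial\Gamma$ is a point at which $f$ is continuous, so $c:=f(0)\in\mathbb{R}$ is well-defined. Combining this with \eqref{concave} via the concave supergradient inequality applied at $\lambda\in\Gamma$ and tested at $\mu=0\in\overline{\Gamma}$ (legitimated by passing to the limit using \eqref{continuity1}) gives
\[
c\leq f(\lambda)+\sum_i f_i(\lambda)(0-\lambda_i),
\qquad\text{i.e.}\qquad \sum_i f_i(\lambda)\lambda_i\leq f(\lambda)-c,
\]
an \emph{upper} bound on the quantity of interest coming for free. To produce the matching lower bound I would compare $\lambda$ with the diagonal direction $\vec{\bf 1}\in\Gamma_n\subset\Gamma$, along which monotonicity \eqref{elliptic} forces $s\mapsto f(s\vec{\bf 1})$ to be non-decreasing starting from the finite value $c$ at $s=0$. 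Given $\lambda\in\Gamma$, pick the largest $s_0\geq 0$ so that $\nu:=\lambda-s_0\vec{\bf 1}\in\overline{\Gamma}$; by construction $\nu\in\partial\Gamma$ and by \eqref{continuity1} the value $f(\nu)$ is a finite real number. Applying the supergradient inequality between $\lambda$ and $\nu$ then couples $\sum_i f_i(\lambda)\lambda_i$ to a controlled combination of $f(\lambda)-f(\nu)$, $s_0$, and $\sum_i f_i(\lambda)$.

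The step I expect to be the main obstacle is upgrading this local bound to one that survives the scaling $\lambda\mapsto t\lambda$ uniformly as $t\to+\infty$: under this scaling $\nu$ becomes $t\nu\in\partial\Gamma$, and while $f(t\nu)$ remains finite for each $t$, one must prevent it from decaying to $-\infty$ faster than a linear rate compatible with \eqref{addistruc}. Overcoming this is where a Schur-type comparison against the diagonal ray $\{s\vec{\bf 1}\}$ enters, using the symmetry of $\Gamma$ and $f$ together with the global continuity \eqref{continuity1}; this is where the continuity hypothesis really bites, as opposed to merely at the vertex, and it is what ultimately forces $\sum_i f_i(\lambda)\lambda_i\geq 0$ and hence, via Lemma~\ref{lemma3.4}, the condition \eqref{addistruc}.
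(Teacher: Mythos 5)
Your high-level instincts (diagonal ray, finiteness of $f$ at the vertex) are in the right direction, but the concrete argument does not close, and the step you flag as ``the main obstacle'' is a genuine gap rather than something that will resolve itself. Testing the supergradient of $f$ at $\lambda$ against $\nu=\lambda-s_0\vec{\bf 1}\in\partial\Gamma$ gives
\[
-s_0\sum_i f_i(\lambda)\;\ge\; f(\nu)-f(\lambda),
\]
which is a bound on $s_0\sum_i f_i(\lambda)$, not on $\sum_i f_i(\lambda)\lambda_i$; substituting it into the decomposition $\sum_i f_i(\lambda)\lambda_i=\sum_i f_i(\lambda)\nu_i+s_0\sum_i f_i(\lambda)$ produces only another \emph{upper} bound, namely $\sum_i f_i(\lambda)\lambda_i\le\sum_i f_i(\lambda)\nu_i+\bigl(f(\lambda)-f(\nu)\bigr)$. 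To turn this into the desired lower bound one would need $\sum_i f_i(\lambda)\nu_i\ge 0$ for boundary $\nu$, which is (the boundary case of) condition \eqref{addistruc-2}---exactly what you are trying to prove. So that step is circular, and the concluding sentence about a ``Schur-type comparison\dots ultimately forces\dots'' is a gesture, not an argument.

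The precise ingredient you are missing is Lemma~\ref{k-buchong1}: under \eqref{elliptic} and \eqref{concave}, every nonempty level set $\partial\Gamma^\sigma$ contains a diagonal point $c_\sigma\vec{\bf 1}$ with $c_\sigma>0$ (the minimum-norm point of the closed, convex, symmetric set $\partial\Gamma^\sigma$ lies on the diagonal---this is the precise ``Schur-type'' statement). Once this is available, the detour through Lemma~\ref{lemma3.4} and the reformulation $\sum_i f_i(\lambda)\lambda_i\geq 0$ is unnecessary: for any $\lambda\in\Gamma$ set $\sigma=f(\lambda)$; then $\sigma=f(c_\sigma\vec{\bf 1})\ge\lim_{c\to0^+}f(c\vec{\bf 1})=f(\vec{\bf 0})>-\infty$, using monotonicity \eqref{elliptic} along the diagonal ray (which lies in $\Gamma_n\subset\Gamma$) and continuity \eqref{continuity1} at the vertex. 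Hence $\inf_\Gamma f=f(\vec{\bf 0})$, and in particular $\lim_{t\to+\infty}f(t\lambda)\ge f(\vec{\bf 0})>-\infty$ for every $\lambda\in\Gamma$, which is \eqref{addistruc}. Note also that only continuity at the vertex $\vec{\bf 0}$ is used, not global continuity on $\overline\Gamma$, contrary to what your last paragraph suggests about where \eqref{continuity1} ``really bites.''
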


\begin{proof}
	The proof is based on Lemmas \ref{lemma3.4} and \ref{k-buchong1} below.
	By \eqref{continuity1} we have 
	\begin{equation}
		\label{t1-to-0}
		\begin{aligned}
			\lim_{t\rightarrow0^+}f(t\vec{\bf1})
			=f(\vec{\bf 0})>-\infty. \nonumber
		\end{aligned}
	\end{equation}
	Then we have \eqref{addistruc} 
	according to Lemma \ref{k-buchong1}.
	
\end{proof}

The following statement is standard and has been used in literature. 
\begin{lemma}
	\label{k-buchong1}
	Suppose $f$ satisfies \eqref{elliptic} and \eqref{concave}.
	Then for any $\sigma$
	with $\partial\Gamma^\sigma\neq \emptyset$,
	there exists $c_\sigma \vec{\bf 1}\in \partial\Gamma^\sigma$, $c_\sigma>0$.
\end{lemma}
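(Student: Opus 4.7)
The plan is to reduce the problem to the intermediate value theorem applied to the one-variable function $\phi(c):=f(c\vec{\bf 1})$ on $(0,\infty)$, using symmetry and concavity to place a diagonal point above the level $\sigma$ and strict monotonicity \eqref{elliptic} to interpolate down to it.

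First, I would fix any $\lambda_0\in\partial\Gamma^{\sigma}$ and consider its orbit $\{\lambda_0^\pi:\pi\in S_n\}$ under permutations. Each $\lambda_0^\pi$ still lies in $\partial\Gamma^\sigma$ by symmetry of $f$, and by convexity of $\Gamma$ their barycenter $\bar\lambda\vec{\bf 1}$, where $\bar\lambda=\tfrac{1}{n}\sum_i\lambda_{0,i}$, lies in $\Gamma$. Jensen's inequality applied with the concavity \eqref{concave} then gives
\[
\phi(\bar\lambda)=f(\bar\lambda\vec{\bf 1})\;\geq\;\frac{1}{n!}\sum_{\pi\in S_n}f(\lambda_0^\pi)\;=\;\sigma.
\]
One also has $\bar\lambda>0$: since $\vec{\bf 1}\in\Gamma_n\subset\Gamma$ while $\vec{\bf 0}\in\partial\Gamma\setminus\Gamma$, the case $\bar\lambda\leq 0$ would force the segment from $\bar\lambda\vec{\bf 1}$ to $\vec{\bf 1}$ inside the convex set $\Gamma$ to pass through $\vec{\bf 0}$, contradicting openness of $\Gamma$.

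Next, by \eqref{elliptic} we have $\phi'(c)=\sum_i f_i(c\vec{\bf 1})>0$ for all $c>0$, so $\phi$ is continuous and strictly increasing on $(0,\infty)$, and concave because $f$ is concave. Since $\phi(\bar\lambda)\geq\sigma$, to produce $c_\sigma>0$ with $\phi(c_\sigma)=\sigma$ it suffices, via the intermediate value theorem, to exhibit some $c_-\in(0,\bar\lambda]$ with $\phi(c_-)\leq\sigma$; the required $c_\sigma$ then lies in $[c_-,\bar\lambda]$.

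The main step is producing such a $c_-$. In the favorable case $\lambda_{0,\min}:=\min_i\lambda_{0,i}>0$, the choice $c_-=\lambda_{0,\min}$ works immediately: then $c_-\vec{\bf 1}\leq\lambda_0$ componentwise, so strict monotonicity \eqref{elliptic} yields $\phi(c_-)\leq f(\lambda_0)=\sigma$. The hard part is the general case in which $\lambda_0$ may have some non-positive entries. There I plan to exploit the concavity of $\phi$ together with the cone/openness structure of $\Gamma$; the tangent-line inequality at $c=\bar\lambda$ gives
\[
\phi(0^+)\;\leq\;\phi(\bar\lambda)-\bar\lambda\,\phi'(\bar\lambda)\;<\;\phi(\bar\lambda),
\]
which is strict because $\bar\lambda>0$ and $\phi'(\bar\lambda)>0$. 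If $\phi(\bar\lambda)=\sigma$ this already gives $\phi(0^+)<\sigma$ and IVT delivers $c_\sigma\in(0,\bar\lambda)$; otherwise the plan is to iterate the averaging argument (using the $S_n$-symmetry of $\partial\Gamma^\sigma$ and comparison with suitable shifts $\lambda_0-\varepsilon\vec{\bf 1}\in\Gamma$ pushed to the edge of $\Gamma$) to upgrade this bound to $\phi(0^+)\leq\sigma$. Once that is in hand, IVT applied to the continuous strictly increasing $\phi$ produces the desired $c_\sigma\in(0,\bar\lambda]$ with $\phi(c_\sigma)=\sigma$, so that $c_\sigma\vec{\bf 1}\in\partial\Gamma^\sigma$.
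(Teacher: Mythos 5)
The proposal takes a genuinely different route from the paper. The paper locates $c_\sigma\vec{\bf 1}$ directly as the closest point of $\partial\Gamma^\sigma$ to the origin: the level set of a symmetric concave function is a convex hypersurface invariant under permutations, so (once a closest point exists) the minimizer of the distance-to-origin must lie on the diagonal $\{t\vec{\bf 1}\}$. You instead try to trap the root of $\phi(c)=f(c\vec{\bf 1})$ by the intermediate value theorem, using Jensen for the upper side and hoping to produce the lower side.

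The part of your argument you do carry out is fine: averaging the permutation orbit of $\lambda_0\in\partial\Gamma^\sigma$ and applying concavity gives $\phi(\bar\lambda)\geq\sigma$, and $\bar\lambda>0$ follows since $\Gamma$ is an open convex cone not containing its vertex, so that $\Gamma\subset\Gamma_1$; your segment argument is essentially this. The genuine gap is the one you flag yourself: you never establish $\phi(c_-)\leq\sigma$ for any $c_->0$ beyond the easy case $\lambda_{0,\min}>0$. The ``iterate the averaging argument'' plan cannot close that gap, because Jensen and concavity only ever push in one direction: they bound $f$ at a diagonal barycenter \emph{from below} by values off the diagonal, never from above. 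Likewise the tangent-line inequality $\phi(0^+)<\phi(\bar\lambda)$ is vacuous here, since it just restates that $\phi$ is increasing; it does not compare $\phi(0^+)$ with $\sigma$. Since $\phi$ is continuous and strictly increasing, the conclusion of the lemma is \emph{equivalent} to $\phi(0^+)<\sigma$, and that inequality is precisely the missing ingredient --- it does not come out of \eqref{elliptic} and \eqref{concave} by the averaging device alone (it would have to be supplied by extra structure such as $\sigma\geq\sup_{\partial\Gamma}f$ or $f\in C(\overline{\Gamma})$). The paper's closest-point argument avoids producing this lower bound explicitly; to repair your approach you would have to inject a hypothesis of that type rather than iterate Jensen.
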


\begin{proof}
	
As in \eqref{def-levelset}, we denote $\partial\Gamma^\sigma=\{\lambda\in\Gamma: f(\lambda)=\sigma\}$. 
	The level set $\partial\Gamma^\sigma$ is a smooth
	convex noncompact 
	hypersurface contained in $\Gamma$. Moreover, $\partial\Gamma^\sigma$ is symmetric with respect to  diagonal $\{t\vec{\bf 1}: t\in \mathbb{R}\}.$
	
	Let $\lambda^0\in \partial\Gamma^\sigma$ be the closest point to the origin. (Such a point exists, since $\partial\Gamma^\sigma$ is a closed set). The idea is to prove $\lambda^0$ is 
	the one for what we seek.
	
	Assume $\lambda^0$ is not contained in the diagonal. Then by the implicit function theorem, and the convexity and symmetry of $\partial\Gamma^\sigma$, one can choose $\lambda\in \partial\Gamma^\sigma$
so that the distance to origin is smaller than that of $\lambda^0$. It is a contradiction.
	
\end{proof}

Theorem \ref{thm2-diri-de} follows as a conclusion of 
 Theorems \ref{thm1-2} and \ref{thm1-bdy}, Lemma \ref{lemma1-con-addi} and the method of approximation.
 

 \section{Dirichlet problem on products}
  \label{sec6}
 
  \subsection{Construction of subsolutions}
  \label{sec7}
 
 According to the main theorems, the existence of subsolutions is a key ingredient to solve the Dirichlet problem.  

Let  $(M,J,\omega)=(X\times S,J,\omega)$ be a product of a closed complex manifold $X$ with a compact Riemann surface $S$ with sufficiently smooth boundary, and let $\nu$ be as above the unit inner normal vector along the boundary.
We construct strictly admissible subsolutions with 
  $\frac{\partial\underline{u}}{\partial \nu} |_{\partial M}< 0$
 on such a product. 
    It is noteworthy that $J$ is the standard induced complex structure, 
 and $\omega$ 
 is not necessary the  product metric $\omega=\pi_1^*\omega_X+\pi_2^*\omega_S$.
 
We begin with the solution $h$ to 
    \begin{equation}  
    \label{possion-def}
    \begin{aligned}
  \,& \Delta_S h =1 \mbox{ in } S, \,& h=0 \mbox{ on } \partial S.  
   \end{aligned}
  \end{equation} 
Here $\Delta_S$ is the Laplacian operator of $(S,J_S,\omega_S)$.
The existence can be found in  standard monographs, 
see e.g. \cite{GT1983}. More precisely, when $\partial S\in C^\infty$,   $h\in C^{\infty}(\bar S)$; 
while $\partial S\in C^{2,\beta}$, $0<\beta<1$,   $h\in C^\infty(S)\cap C^{2,\beta}(\bar S)$. Moreover,    $\left.\frac{\partial h}{\partial\nu}\right|_{\partial S}<0$.

We use such $h$ to construct subsolutions.
Let
    \begin{equation} 
    	\label{construct-subsolution}  \begin{aligned}
   \underline{u}=\varphi+t\pi_2^* h
  \end{aligned}\end{equation}
for  $t\gg1$
   ($\pi_2^* h=h\circ\pi_2$, still denoted by $h$  
    for simplicity),
   then
   \begin{equation}  
    \begin{aligned}
    \mathfrak{g}[\underline{u}]=\mathfrak{g}[\varphi]+t\pi_2^*\omega_S, \mbox{  } \mbox{ }
    \left.\frac{\partial \underline{u}}{\partial \nu}\right|_{\partial M}<0 \mbox{ for } t\gg1. \nonumber
    \end{aligned}
  \end{equation}
   Therefore,  $\underline{u}$ is the subsolution if condition \eqref{cone-condition1} holds.
 Furthermore, since the boundary data $\varphi$ is admissible,   \eqref{cone-condition1} always holds when 
 imposed \eqref{unbounded}.

Significantly, according to 
Lemma \ref{yuan's-quantitative-lemma}, if $\omega=\pi_1^*\omega_X+\pi_2^*\omega_S$ and $\chi$ splits by $\chi=\pi_1^*\chi_1+ \pi_2^*\chi_2,$ where 
$\chi_1$ is a real $(1,1)$-form on $X$, 
$\chi_2$ is a real $(1,1)$-form on $S$, 
 then condition \eqref{cone-condition1} reduces to 
\begin{equation}
	\label{cone-condition1-1}
	\begin{aligned}
		\lim_{t\rightarrow +\infty} f(\lambda'(\chi_1), t)>\psi 
		\mbox{ and  } \lambda_{\omega_X}(\chi_1)\in \Gamma_\infty  \mbox{ in } \bar M,   \nonumber
	\end{aligned}
\end{equation}
where $\lambda'(\chi_1)$ are the eigenvalues of $\chi_1$ with respect to $\omega_X$.
This indicates that, 
for $\chi=\pi_1^*\chi_1+ \pi_2^*\chi_2,$ the solvability of Dirichlet problem is heavily determined by 
$\chi_1$ rather than by $\chi_2$.

\subsection{The Dirichlet problem with less regular boundary} 
\label{sec8}

 A somewhat remarkable fact to us is that the regularity assumptions on boundary can be further weakened.
    The motivation is primarily based on Theorem \ref{thm2-bdy-leviflat} which state that,
  when $\partial M$ is {holomorphically flat} and the boundary value is  a constant, 
  the quantitative boundary estimate \eqref{bdy-sec-estimate-quar1}  
  depends only on $\partial M$ up to second derivatives and other known data. 
  Besides,  a result due to Silvestre-Sirakov \cite{Silvestre2014Sirakov} allows one to derive
 $C^{2,\alpha}$ boundary regularity with only assuming $C^{2,\beta}$ boundary.
Together with the construction of subsolution, 
we can study the Dirichlet problem on the products
with less regular boundary.\renewcommand{\thefootnote}{\fnsymbol{footnote}}\footnote{We emphasize that the geometric quantities of $(M,\omega)$ (curvature $R_{i\bar j k\bar l}$ and the torsion $T^k_{ij}$) keep bounded as approximating to $\partial M$, and all derivatives of ${\chi}_{i\bar j}$ has continues extensions to $\bar M$,
 whenever $M$ has less regularity boundary. Typical examples are as follows: $M\subset M'$, 
 $\mathrm{dim}_{\mathbb{C}}M'=n$, 
$\omega=\omega_{M'}|_{M}$ and the given data ${\chi}$ can be smoothly defined on $M'$.}

   \begin{theorem}
  \label{mainthm-09}
   Let  $(M, J,\omega)=(X\times S,J, \omega)$ be as above with
    $\partial S\in C^{2,\beta}$, $0<\beta<1$.
   Suppose in addition that  \eqref{elliptic}, \eqref{concave} and \eqref{cone-condition1} hold. 
   Then we have two conclusions:
   \begin{itemize}
   \item   Equation \eqref{mainequ} has a 
   unique $C^{2,\alpha}$ admissible solution with $u|_{\partial M}=0$
   for some 
   $0<\alpha\leq\beta$, provided that 
 $\psi\in C^2(\bar M)$, $\inf_{M} \psi> \sup_{\partial \Gamma}f$  and $f$ satisfies \eqref{addistruc}.
   \item Suppose in addition that
     $f\in C^\infty(\Gamma)\cap C(\bar\Gamma)$, $\psi\in C^{1,1}(\bar M)$ and   
    $\inf_{M} \psi=sup_{\partial \Gamma}f$. Then the Dirichlet problem  \eqref{mainequ}   has a weak solution   with $u|_{\partial M}=0$,
   $u\in C^{1,\alpha}(\bar M)$, $\forall 0<\alpha<1$  $\lambda(\mathfrak{g}[u])\in \bar \Gamma$ and $\Delta u \in L^{\infty}(\bar M)$.
   \end{itemize}
   \end{theorem}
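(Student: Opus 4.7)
The plan is to reduce both parts to the smooth-boundary results Theorems~\ref{mainthm-10-degenerate-2} and \ref{mainthm-10-degenerate} by a boundary-smoothing approximation on the Riemann surface factor $S$, and then pass to the limit using two delicate quantitative features of the boundary estimate. The key observation is that in the product $M=X\times S$ with constant boundary data, $\partial M=X\times\partial S$ is automatically \emph{holomorphically flat}: a local conformal chart on $S$ near any boundary point straightens $\partial S$ to a real line, giving holomorphic coordinates $(z_1,\ldots,z_n)$ with $\partial M=\{\mathfrak{Re}(z_n)=0\}$. Hence Theorem~\ref{thm2-bdy-leviflat} applies and yields a second-order boundary estimate that depends on $\partial M$ only up to second derivatives and is independent of $(\delta_{\psi,f})^{-1}$. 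Combined with the boundary $C^{2,\alpha}$ regularity of Silvestre-Sirakov \cite{Silvestre2014Sirakov}, this matches exactly the regularity hypothesis $\partial S\in C^{2,\beta}$.

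For the first statement, approximate $\partial S$ from inside by $C^\infty$ curves $\partial S_\epsilon$ with $\partial S_\epsilon\to\partial S$ in $C^{2,\beta'}$ for some $\beta'<\beta$, set $M_\epsilon=X\times S_\epsilon$, and let $h_\epsilon$ solve \eqref{possion-def} on $S_\epsilon$. By standard elliptic theory $h_\epsilon\to h$ in $C^{2,\beta'}(\bar S)$, so for $t$ sufficiently large (independent of $\epsilon$), \eqref{cone-condition1} makes $\underline{u}_\epsilon=t\,\pi_2^*h_\epsilon$ a strictly admissible subsolution on $M_\epsilon$ with $\underline{u}_\epsilon|_{\partial M_\epsilon}=0$ and $|\underline{u}_\epsilon|_{C^{2,\beta'}(\bar M_\epsilon)}$ uniformly bounded. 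Theorem~\ref{mainthm-10-degenerate-2} then yields a smooth admissible solution $u_\epsilon$ on $M_\epsilon$, and Theorem~\ref{thm2-bdy-leviflat}, applied because $\partial M_\epsilon$ is holomorphically flat with constant boundary data, gives
\[\sup_{\partial M_\epsilon}\Delta u_\epsilon\leq C\bigl(1+\sup_{M_\epsilon}|\nabla u_\epsilon|^2\bigr)\]
with $C$ independent of $\epsilon$; the global second estimate (Theorem~\ref{globalsecond}) and the Liouville-type blow-up of \cite[Theorem~20]{Gabor} then upgrade this to a uniform $C^2$ bound on $u_\epsilon$. Interior Evans-Krylov together with Silvestre-Sirakov supply uniform $C^{2,\alpha}(\bar M_\epsilon)$ bounds for some $0<\alpha\leq\beta$; a diagonal Arzel\`a-Ascoli extraction yields the desired $C^{2,\alpha}$ admissible solution on $\bar M$, and uniqueness follows from the standard comparison principle.

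For the second statement, apply the first statement with $\psi$ replaced by $\psi_\delta=\psi+\delta$, which is nondegenerate; since \eqref{cone-condition1} is an open condition in $\psi$, a single $t$ still produces strictly admissible subsolutions on all $M_\epsilon$ uniformly for $\delta\in(0,\delta_0]$. The resulting solutions $u_{\epsilon,\delta}\in C^{2,\alpha}(\bar M_\epsilon)$ satisfy the same estimates, which are \emph{independent of $(\delta_{\psi_\delta,f})^{-1}$} by the flat-constant-boundary case of Theorem~\ref{thm2-bdy-leviflat}. Thus $|u_{\epsilon,\delta}|+|\Delta u_{\epsilon,\delta}|\leq C$ and, via blow-up, $|\nabla u_{\epsilon,\delta}|\leq C$, all uniformly in $\epsilon$ and $\delta$; sending $\delta,\epsilon\to 0$ along a diagonal subsequence and invoking weak-convergence arguments as in \cite{Blocki09geodesic,Chen,Guan2015Sun} yields the weak solution $u\in C^{1,\alpha}(\bar M)$ with $\Delta u\in L^\infty(\bar M)$ and $\lambda(\mathfrak{g}[u])\in\bar\Gamma$. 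The hard part of the whole argument is closing the approximation uniformly: all a priori estimates on $u_\epsilon$ (and $u_{\epsilon,\delta}$) must be independent of $\epsilon$ (and $\delta$). This rests entirely on the two delicate features of Theorem~\ref{thm2-bdy-leviflat} highlighted in the first paragraph, namely its dependence on $\partial M$ only up to second (rather than third) derivatives, and its independence of $(\delta_{\psi,f})^{-1}$ in the holomorphically flat constant-boundary-data setting; both are precisely what makes the reduced regularity hypothesis $\partial S\in C^{2,\beta}$ and the degenerate limit tractable.
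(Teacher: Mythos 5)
Your overall strategy---approximate $\partial S$ from inside by smooth curves, solve on each approximating product, exploit the holomorphically-flat/constant-boundary refinement (Theorem~\ref{thm2-bdy-leviflat}) to get second-order bounds that depend only on $\partial M$ up to two derivatives and are free of $(\delta_{\psi,f})^{-1}$, then close with Silvestre--Sirakov and a diagonal limit---is the right one and matches the paper's route in broad outline. However, there is a real gap in the step where you assert that the constant $C$ in $\sup_{\partial M_\epsilon}\Delta u_\epsilon\leq C(1+\sup_{M_\epsilon}|\nabla u_\epsilon|^2)$ is independent of $\epsilon$. Theorem~\ref{thm2-bdy-leviflat} explicitly lists $|\nabla u|_{C^0(\partial M)}$ among the quantities on which $C$ depends, so for $C$ to be uniform you must first prove that $\sup_{\partial M_\epsilon}|\nabla u_\epsilon|$ is bounded uniformly in $\epsilon$; this is exactly the content of \eqref{uniform-c0-c1} in the paper, and your proposal never establishes it. The paper handles this by solving the linear problem $\Delta w^{(k)}+\mathrm{tr}_\omega\chi=0$ with the same constant boundary value, so that $\underline{u}\leq u^{(k)}\leq w^{(k)}$ gives two-sided normal-derivative comparison, and then proving (using $\Delta(-\underline u-2t\alpha_k)+\mathrm{tr}_\omega\chi\leq 0$) that $\partial_\nu w^{(k)}\leq -\partial_\nu\underline u$ on $\partial M_k$, which bounds $|\nabla u^{(k)}|$ on the boundary independently of $k$. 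Your proof needs an analogous explicit supersolution argument to make the uniform second-order estimate legitimate.

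Two smaller discrepancies. First, the paper's choice of approximating Riemann surfaces is not a generic $C^{2,\beta'}$-approximation but the sublevel sets of $h$ itself, $S_k=\{h<-\alpha_k\}$; this makes the single fixed subsolution $\underline u=t\,\pi_2^* h$ automatically constant (equal to $-t\alpha_k$) on $\partial M_k$, so there is no need to re-solve a Poisson equation or worry about $h_\epsilon\to h$ convergence (your convergence statement $h_\epsilon\to h$ in $C^{2,\beta'}(\bar S)$ is also ill-posed as written, since $h_\epsilon$ lives on $\bar S_\epsilon\subsetneq\bar S$). Second, to invoke Theorem~\ref{mainthm-10-degenerate-2} you need $\psi\in C^\infty(\bar M_\epsilon)$, while the first item of Theorem~\ref{mainthm-09} only assumes $\psi\in C^2(\bar M)$; the paper therefore also inserts smooth approximations $\psi^{(k)}$ with $|\psi-\psi^{(k)}|_{C^2(\bar M_k)}\leq 1/k$, a step your argument should include.
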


 \begin{proof}
It only requires to consider the nondegenerate case: For some $\delta_0>0$,
 \begin{equation}
 \begin{aligned}
 \psi>\sup_{\partial\Gamma}f+\delta_0 \mbox{ in } M.
 \end{aligned}
 \end{equation}
  
The first step is to construct  approximate Dirichlet problems with constant boundary value data.
Let $h$ be the solution to \eqref{possion-def}. For $t\gg1$, $\underline{u}=th$ satisfies
 \begin{equation}
 \begin{aligned}
 f(\lambda(\mathfrak{g}[\underline{u}]))\geq \psi+\delta_1 \mbox{ in } M 
 \end{aligned}
 \end{equation}
  for some $\delta_1>0.$
Note that $$h\in C^\infty(S)\cap C^{2,\beta}(\bar S) \mbox{ and } \left.\frac{\partial h}{\partial\nu}\right|_{\partial S}<0,$$
 we get a sequence of level sets of $h$, say $\{h=-\alpha_k\}$
 and  a family of smooth Riemann surfaces $S_{k}$ enclosed by  $\{h=-\alpha_k\}$,
  such that 
 $\cup S_{k}=S$ and $\partial S_{k}$ converge to $\partial S$ in the norm of $C^{2,\beta}$. 
  Denote $M_{k}=X\times S_{k}$. For any $k\geq 1$, there exists a $\psi^{(k)}\in C^\infty(\bar M_k)$ 
 such that
  \begin{equation}
 \begin{aligned}
 |\psi-\psi^{(k)}|_{C^2(\bar M_k)}\leq 1/k. 
 \end{aligned}
 \end{equation}
 For $k\gg1$ 
  we have
 \begin{equation}
\label{App-Diri1-subsolution}
\begin{aligned}
\,& 
f(\lambda(\mathfrak{g}[\underline{u}]))\geq \psi^{(k)} +{\delta_1}/{2} \mbox{ in } M_{k}, \,& 
\underline{u}=-t\alpha_k \mbox{ on }   \partial M_{k}
\end{aligned}\end{equation} 
  which is a strictly admissible subsolution to approximate Dirichlet problem
\begin{equation}
\label{App-Diri1}
\begin{aligned}
\,& 
f(\lambda(\mathfrak{g}[u]))= \psi^{(k)} \mbox{ in } M_{k}, \,& 
u =-t\alpha_k \mbox{ on }   \partial M_{k}.
\end{aligned}\end{equation} 
 According to 
  Theorems \ref{thm1-1} and \ref{thm1-2},  the Dirichlet problem \eqref{App-Diri1} admits a unique  smooth  admissible solution $u^{(k)}\in C^{\infty}(\bar M_k)$. 
  Moreover, notice the boundary data of \eqref{App-Diri1} is a constant, Theorem \ref{thm2-bdy-leviflat}applies. Therefore,
  \begin{equation}
 \label{uniform-00}
\begin{aligned}
\sup_{M_k}\Delta u^{(k)}\leq C_k \left(1+\sup_{M_{k}}|\nabla u^{(k)}|^2\right)
\end{aligned}
\end{equation}
where $C_k$ is a constant depending on $|u^{(k)}|_{C^0(M_{k})}$, $|\nabla u^{(k)}|_{C^0(\partial M_{k})}$, $|\underline{u}|_{C^{2}(M_{k})}$,   $|\psi^{(k)}|_{C^{2}(M_{k})}$,
$\partial M_{k}$ up to second order derivatives  and other known data (but not on $(\delta_{\psi^{(k)},f})^{-1}$).

It requires to prove that 
  \begin{equation}
 \label{uniform-c0-c1}
\begin{aligned}
|u^{(k)}|_{C^0(M_{k})}+\sup_{\partial M_{k}}|\nabla u^{(k)}|\leq C, \mbox{ independent of } k.
\end{aligned}
\end{equation}
Let $w^{(k)}$ be the solution of
 \begin{equation}
 \label{supersolution-k}
\begin{aligned}
\,& \Delta w^{(k)} +\mathrm{tr}_\omega{\chi} =0 \mbox{ in } M_{k}, \,& w^{(k)}= -t\alpha_k \mbox{ on } \partial M_{k}. \nonumber
\end{aligned}
\end{equation}
By the maximum principle and the boundary value condition, 
one has
\begin{equation}
\label{approxi-boundary1}
\begin{aligned}
\,& \underline{u} \leq u^{(k)}\leq w^{(k)} \mbox{ in } M_{k}, \,&
 \frac{\partial \underline{u}}{\partial \nu} \leq  \frac{\partial u^{(k)}}{\partial \nu} \leq \frac{\partial w^{(k)}}{\partial \nu} 
   \mbox{ on } \partial M_{k}.  \nonumber
\end{aligned}
\end{equation}
 On the other hand, for $t\gg1$, 
we have  
\begin{equation}
 \begin{aligned}
  \Delta (-\underline{u}-2t\alpha_k)+\mathrm{tr}_\omega \chi=\,& -t\mathrm{tr}_\omega(\pi_2^*\omega_S)+\mathrm{tr}_\omega\chi\leq 0 \,& \mbox{ in } M_k,\\
   -\underline{u}-2t\alpha_k=\,& -t\alpha_k \,& \mbox{ on } \partial M_k.  \nonumber
 \end{aligned}
 \end{equation}
As a result, we have
 \begin{equation}
 \begin{aligned}
\,& w^{(k)}  \leq -\underline{u} -2t\alpha_k \mbox{ in } M_{k},  \,&  w^{(k)}  =-\underline{u}-2t\alpha_k   \mbox{ on } \partial M_{k}, \nonumber
 \end{aligned}
 \end{equation}
which further implies
\[\frac{\partial w^{(k)}}{\partial \nu} \leq -\frac{\partial \underline{u}}{\partial \nu} \mbox{ on } \partial M_k\] 
as required. 
 Consequently, 
 \eqref{uniform-00} holds for a uniform constant $C'$ which does not depend on $k$. 
Thus $$|u|_{C^2(M_{k})}\leq C, \mbox{ independent of } k.$$ 

To complete the proof, we apply Silvestre-Sirakov's \cite{Silvestre2014Sirakov} result to 
derive $C^{2,\alpha'}$ estimates on the boundary, while
the convergence of  $\partial M_{k}$ in the norm $C^{2,\beta}$ 
 allows  us to take a limit ($\alpha'$ can be uniformly chosen).

\end{proof}


For the Dirichlet problem on $M=X\times S$ with homogeneous boundary value,
according to Theorem \ref{mainthm-09},
it is only requires to assume   $\partial M\in C^{2,\beta}$. 
Such a regularity assumption on the boundary  
is impossible for homogeneous Monge-Amp\`ere equation on certain
bounded domains in $\mathbb{R}^n$ as shown by
 Caffarelli-Nirenberg-Spruck \cite{CNS-deg},
where the $C^{3,1}$-regularity assumptions on boundary and boundary value are optimal for the $C^{1,1}$ global regularity of the weak 
solution to homogeneous real Monge-Amp\`ere equation on $\Omega\subsetneq\mathbb{R}^n$.
 Additionally, this is also different from the case for Dirichlet problem of
  nondegenerate real Monge-Amp\`ere equation on certain bounded domains $\Omega\subsetneq \mathbb{R}^2$,
as shown by Wang \cite{WangXujia-1996} the optimal regularity assumptions on
 boundary and boundary value  are both $C^3$-smooth.
  We refer to   \cite{Figalli2017MA,Guedj2017Zeriahi} and references therein
  for more results regarding to Monge-Amp\`ere equation with less regular right-hand side.

\section{Other results}
\label{sec9}

\subsection{Uniqueness of weak solution} \label{uniqueness-weak-solution}
Following Chen \cite{Chen}, we define 
\begin{definition}
\label{def-c0-weak}
A continuous function $u\in C(\bar M)$ is   a weak $C^0$-solution to the degenerate equation \eqref{mainequ}  with prescribed boundary data $\varphi$, if
for any $\epsilon>0$ there is a $C^2$-{admissible} function $\widetilde{u}$ such that $$\left|u-\widetilde{u}\right|<\epsilon,$$ 
 where $\widetilde{u}$ solves
\begin{equation}
\begin{aligned}
F(\mathfrak{g}[\widetilde{u}])=\psi+\rho_{\epsilon} \mbox{ in } M, \mbox{  } \widetilde{u}=\varphi \mbox{ on } \partial M. \nonumber
\end{aligned}
\end{equation}
Here $\rho_{\epsilon}$ is a function satisfying $0<\rho_{\epsilon}<C(\epsilon)$, and $C(\epsilon)\rightarrow 0$ as $\epsilon\rightarrow 0$.
\end{definition}

\begin{theorem}
\label{weakc0comparison}
Suppose $u^1$, $u^2$ are two $C^0$-weak solutions to the degenerate equation  \eqref{mainequ} with 
boundary data $\varphi^1$, $\varphi^2$. 
Then $$\sup_{M}|u^1-u^2|\leq \sup_{\partial M}|\varphi^1-\varphi^2|.$$
\end{theorem}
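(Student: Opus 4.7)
The plan is to reduce the desired inequality to the classical strict-subsolution comparison for $C^{2}$-admissible solutions of two nondegenerate versions of~\eqref{mainequ} whose right-hand sides are pointwise strictly ordered, and then to exploit the scale-freedom in Definition~\ref{def-c0-weak} to choose the two approximations so that the required strict ordering actually holds.

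Fix $\epsilon>0$. First I would apply Definition~\ref{def-c0-weak} to $u^{1}$ at level $\epsilon/2$ to obtain a $C^{2}$-admissible $\tilde u^{1}$ with $|\tilde u^{1}-u^{1}|<\epsilon/2$, $\tilde u^{1}=\varphi^{1}$ on $\partial M$, and
\[
F(\mathfrak{g}[\tilde u^{1}])=\psi+\rho^{1} \quad \text{in } M, \qquad 0<\rho^{1}<C(\epsilon/2).
\]
Since $\tilde u^{1}\in C^{2}(\bar M)$, $\psi\in C(\bar M)$ and $\rho^{1}$ is pointwise strictly positive on the compact set $\bar M$, the continuous function $\rho^{1}$ admits a uniform lower bound $\delta_{1}:=\inf_{M}\rho^{1}>0$. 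Next I would choose $\epsilon'\in(0,\epsilon/2)$ with $C(\epsilon')<\delta_{1}/2$, which is possible because $C(\,\cdot\,)\to 0$. Applying Definition~\ref{def-c0-weak} to $u^{2}$ at level $\epsilon'$ yields a $C^{2}$-admissible $\tilde u^{2}$ with $|\tilde u^{2}-u^{2}|<\epsilon'$, $\tilde u^{2}=\varphi^{2}$ on $\partial M$ and
\[
F(\mathfrak{g}[\tilde u^{2}])=\psi+\rho^{2}, \qquad 0<\rho^{2}<C(\epsilon')<\delta_{1}/2<\rho^{1} \ \text{pointwise on } \bar M.
\]

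With this strict ordering $F(\mathfrak{g}[\tilde u^{1}])>F(\mathfrak{g}[\tilde u^{2}])$ in $M$, a clean classical comparison goes through: if the continuous function $\tilde u^{1}-\tilde u^{2}-\sup_{\partial M}(\varphi^{1}-\varphi^{2})$ were positive somewhere on $\bar M$ it would attain its maximum at an interior point $p_{0}$, where $\sqrt{-1}\partial\overline{\partial}(\tilde u^{1}-\tilde u^{2})(p_{0})\leq 0$ and hence $\mathfrak{g}[\tilde u^{1}](p_{0})\leq \mathfrak{g}[\tilde u^{2}](p_{0})$ as Hermitian matrices; the monotonicity of $F$ on the admissible cone implied by~\eqref{elliptic} then forces $F(\mathfrak{g}[\tilde u^{1}](p_{0}))\leq F(\mathfrak{g}[\tilde u^{2}](p_{0}))$, contradicting the strict inequality above. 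Therefore $\tilde u^{1}-\tilde u^{2}\leq\sup_{\partial M}(\varphi^{1}-\varphi^{2})$ on $\bar M$, and the triangle inequality gives
\[
u^{1}-u^{2}\leq \tilde u^{1}-\tilde u^{2}+\tfrac{\epsilon}{2}+\epsilon' \leq \sup_{\partial M}(\varphi^{1}-\varphi^{2})+\epsilon.
\]
Letting $\epsilon\to 0$ and swapping the roles of $u^{1}$ and $u^{2}$ yields the required two-sided bound.

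The main obstacle is to upgrade the merely pointwise positivity of $\rho_{\epsilon}$ in Definition~\ref{def-c0-weak} to the strict pointwise ordering $\rho^{1}>\rho^{2}$; I handle this by fixing $\tilde u^{1}$ first so that $\rho^{1}$ acquires a uniform positive lower bound through continuity and compactness, and only then approximating $u^{2}$ on a sufficiently finer scale so that $\rho^{2}$ can be dominated by $\delta_{1}/2$. Everything else reduces to the elementary maximum-principle argument for admissible $C^{2}$ solutions of two nondegenerate equations with strictly ordered right-hand sides.
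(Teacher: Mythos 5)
Your proof is correct and follows essentially the same route as the argument the paper points to (Chen, \cite[Theorem 4]{Chen}): fix the approximation $\tilde u^1$ first, then approximate $u^2$ on a scale fine enough that the right-hand side of its equation is strictly dominated by that of $\tilde u^1$, and conclude by the interior maximum principle combined with the monotonicity of $F$ on the admissible cone that \eqref{elliptic} gives. The monotonicity step is fine because the set $\{A:\lambda(A)\in\Gamma\}$ is convex, so one may integrate $F^{i\bar j}(A_t)(\mathfrak g[\tilde u^2]-\mathfrak g[\tilde u^1])_{i\bar j}\ge 0$ along the segment.

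One step worth tightening: you assert that $\rho^1$ is strictly positive on the compact set $\bar M$ and deduce $\delta_1:=\inf_M\rho^1>0$. Definition~\ref{def-c0-weak} states the equation $F(\mathfrak g[\tilde u])=\psi+\rho_\epsilon$ ``in $M$'' and requires $0<\rho_\epsilon<C(\epsilon)$ there, i.e.\ on the open manifold; the continuous extension $\rho^1=F(\mathfrak g[\tilde u^1])-\psi$ to $\bar M$ is only guaranteed nonnegative on $\partial M$, and if it vanished at a boundary point you would get $\inf_M\rho^1=0$ and no admissible choice of $\epsilon'$. This is closed under the natural (and surely intended) reading of the definition in which $\rho_\epsilon$ is bounded below by a positive constant — indeed the approximating solutions constructed in the proof of Theorem~\ref{thm2-diri-de} have $\rho_\epsilon$ equal to a positive constant — but since your whole argument hinges on this uniform positivity, it should be stated explicitly rather than folded into ``continuity and compactness.''
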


The proof is almost parallel to that of   \cite[Theorem 4]{Chen}. 
We  omit it here. 
\begin{corollary}
\label{unique-weak-solution}
The weak $C^0$-solution to the Dirichlet problem \eqref{mainequ} for degenerate equation 
 is unique, provided the boundary data is fixed.
\end{corollary}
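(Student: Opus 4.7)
The plan is to deduce Corollary \ref{unique-weak-solution} as an immediate consequence of the comparison estimate in Theorem \ref{weakc0comparison}. Concretely, suppose $u^1$ and $u^2$ are two weak $C^0$-solutions (in the sense of Definition \ref{def-c0-weak}) to the degenerate Dirichlet problem \eqref{mainequ} with the same prescribed boundary data $\varphi$. Applying Theorem \ref{weakc0comparison} to the pair $(u^1,u^2)$ with $\varphi^1 = \varphi^2 = \varphi$, I would immediately obtain
\[
\sup_M |u^1 - u^2| \leq \sup_{\partial M} |\varphi - \varphi| = 0,
\]
which forces $u^1 \equiv u^2$ on $\bar M$.

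Since the whole argument is just a one-line specialization of the comparison inequality, there is essentially no obstacle. The only point that merits a brief sanity check is that Theorem \ref{weakc0comparison} is stated for two weak $C^0$-solutions corresponding to possibly different boundary data, so nothing in the hypothesis prevents us from choosing $\varphi^1 = \varphi^2$; moreover, no regularity beyond what is already in Definition \ref{def-c0-weak} is needed. Thus the proof reduces to invoking Theorem \ref{weakc0comparison} and observing that the right-hand side vanishes.
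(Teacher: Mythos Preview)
Your proposal is correct and matches the paper's approach: the corollary is stated immediately after Theorem \ref{weakc0comparison} with no separate proof, and is clearly intended to follow by the one-line specialization $\varphi^1=\varphi^2$ that you describe.
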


\subsection{Construction of subsolutions 
revisited}

Let $\Omega$ be a bounded strictly pseudoconvex domain in $\mathbb{C}^m$, $2\leq m\leq n-1$, with smooth boundary 
$\partial \Omega$, let $(X,J_X,\omega_X)$ be a closed Hermitian manifold of complex dimension $n-m$.
  For the Dirichlet problem \eqref{mainequ} satisfying 
  \begin{equation}\label{unbounded-buchong-m}\begin{aligned}
  		\lim _{t \rightarrow+\infty} f(\lambda_{1}, \cdots, \lambda_{n-m},\lambda_{n-m+1}+t,  \cdots \lambda_{n}+t)=\sup_{\Gamma} f, \mbox{ } \forall \lambda=\left(\lambda_{1}, \cdots, \lambda_{n}\right) \in \Gamma,  
  	\end{aligned}
  \end{equation}
  we can construct strictly {admissible} subsolutions on 
  $(M,J,\omega)=(X\times \Omega,J,\omega)$ (not necessary to be the standard one $\omega=\pi_1^*\omega_X+\pi_2^*\omega_\Omega$).
 
Since $\Omega$ is a smooth  bounded strictly pseudoconvex domain, there exists a smooth strictly plurisubharmonic function 
$w$ with
\begin{equation} \begin{aligned}
\,& \sqrt{-1} \partial\overline{\partial}w \geq \omega_\Omega \mbox{ in }  \Omega, \,& w=0 \mbox{ on } \partial \Omega. \nonumber
 \end{aligned} \end{equation}
Given an {admissible} boundary value $\varphi$, the subsolution is given by 
  \begin{equation}
 \begin{aligned}
 \underline{u}=tw+\varphi \mbox{ for } t\gg1. \nonumber
 \end{aligned}
 \end{equation}

As a consequence, we obtain the following theorem.
\begin{theorem}
Let $(M,J,\omega)=(X\times\Omega,J,\omega)$ be a product as above. Assume that
 $f$ satisfies \eqref{elliptic}, \eqref{concave}, \eqref{addistruc} and \eqref{unbounded-buchong-m}.
For   $\varphi\in C^\infty(\partial M)$, $\psi\in C^\infty(\bar M)$ satisfying $\inf_M\psi>\sup_{\partial \Gamma}f$, the Dirichlet problem \eqref{mainequ} is uniquely solvable in class of smooth admissible functions.
 
\end{theorem}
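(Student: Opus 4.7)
The plan is to verify that the candidate $\underline u := t\,\pi_2^*w+\varphi$ for $t\gg 1$, suggested in the paragraph preceding the statement, is indeed a strictly admissible subsolution, and then invoke Theorem~\ref{thm1-1}; uniqueness will be automatic from a standard linearization argument.

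Since $\Omega\Subset\mathbb{C}^m$ is smoothly bounded and strictly pseudoconvex, one can fix a smooth plurisubharmonic defining function $w$ with $w|_{\partial\Omega}=0$ and $\sqrt{-1}\partial\overline{\partial}w\ge\omega_\Omega$. Taking a smooth admissible extension (still denoted $\varphi$) of the boundary data to $\bar M$, set $\underline u=t\pi_2^*w+\varphi$, so that $\underline u=\varphi$ on $\partial M$ and
\[\mathfrak g[\underline u]=\mathfrak g[\varphi]+t\,\pi_2^*(\sqrt{-1}\partial\overline{\partial}w).\]
Because $\pi_2^*(\sqrt{-1}\partial\overline{\partial}w)\ge\pi_2^*\omega_\Omega$ is strictly positive in the $\Omega$-directions and vanishes along $X$, Weyl's eigenvalue inequalities imply that exactly $m$ eigenvalues of $\mathfrak g[\underline u]$ grow linearly in $t$, while the remaining $n-m$ stay bounded uniformly on $\bar M$. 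Hypothesis~\eqref{unbounded-buchong-m} therefore yields
\[f(\lambda(\mathfrak g[\underline u]))\longrightarrow\sup_\Gamma f\qquad\text{uniformly on }\bar M,\qquad t\to\infty.\]
Since $\sup_M\psi<\sup_\Gamma f$ (automatic when $\sup_\Gamma f=+\infty$, and otherwise a consequence of $\inf_M\psi>\sup_{\partial\Gamma}f$ combined with~\eqref{unbounded-buchong-m}), for $t$ sufficiently large one gets $f(\lambda(\mathfrak g[\underline u]))>\psi$ on all of $\bar M$. Thus $\underline u$ is a strictly admissible subsolution with boundary value $\varphi$.

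With the subsolution in hand, Theorem~\ref{thm1-1} produces a smooth admissible solution $u\in C^\infty(\bar M)$. Uniqueness follows from linearization: for two admissible solutions $u_1,u_2$, integrating $\nabla F$ along the segment from $\mathfrak g[u_2]$ to $\mathfrak g[u_1]$ gives a linear elliptic operator $L$ with $L(u_1-u_2)=0$ in $M$ and $(u_1-u_2)|_{\partial M}=0$, forcing $u_1\equiv u_2$ by the strong maximum principle.

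The delicate point, which I expect to be the main obstacle, is that Theorem~\ref{thm1-1} is stated under \eqref{unbounded}---a condition on shifting a single eigenvalue---whereas we have only the weaker \eqref{unbounded-buchong-m}. Inspection of Proposition~\ref{proposition-quar-yuan2}, the only step in the derivation of Theorem~\ref{thm1-1} where \eqref{unbounded} is invoked, shows that its sole role is to supply the constant $R_1$ in \eqref{key-03-yuan3v}, i.e.\ to push several eigenvalues simultaneously so that $f$ approaches its supremum. In the product geometry $X\times\Omega$ the ``normal direction'' is $m$-dimensional, and precisely this multi-direction push is furnished by \eqref{unbounded-buchong-m}; with this adaptation the quantitative boundary estimate, Sz\'ekelyhidi's $C^2$ bound, the blow-up gradient estimate, and Evans--Krylov plus Schauder theory all go through verbatim.
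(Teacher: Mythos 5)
The construction of the subsolution $\underline u=t\pi_2^*w+\varphi$, the invocation of Theorem~\ref{thm1-1} to produce a solution, and the uniqueness argument via linearization and the strong maximum principle all match what the paper intends (the paper states the theorem ``as a consequence'' of the subsolution construction without writing out a proof). The genuine problem is the adaptation you claim in the final paragraph.

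Your key sentence --- ``in the product geometry $X\times\Omega$ the `normal direction' is $m$-dimensional, and precisely this multi-direction push is furnished by \eqref{unbounded-buchong-m}'' --- is false. The boundary $\partial M=X\times\partial\Omega$ has real codimension one in $M$, so the complex normal to $\partial M$ is always one-dimensional; the quantity $\mathfrak g(\xi_n,J\bar\xi_n)$ is a scalar, and the matrix $A(R)$ in Lemma~\ref{yuan-k2v} is perturbed only in its single $(n,n)$ entry. Consequently \eqref{key-03-yuan3v} asks one to push a \emph{single} coordinate $R_0$ to infinity, which is exactly what \eqref{unbounded} provides and what \eqref{unbounded-buchong-m} does not. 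The proof of Proposition~\ref{proposition-quar-yuan2} therefore does not ``go through verbatim'' under the weaker hypothesis. Moreover, you cannot fall back on Proposition~\ref{proposition-quar-yuan1}: for a strictly pseudoconvex $\partial\Omega$ the negative Levi eigenvalues $(-\kappa_1,\dots,-\kappa_{n-1})$ of $\partial M$ have $m-1$ strictly negative entries (and $n-m$ zeros), which generically lie outside $\overline\Gamma_\infty$ (e.g.\ for $\Gamma=\Gamma_n$ or $\Gamma_k$), so \eqref{bdry-assumption1} is not satisfied either. Thus neither branch of Theorem~\ref{thm1-bdy} applies directly, and the claim needs a new argument.

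The plausible repair --- which your note gestures at but does not carry out --- is to exploit the \emph{specific} subsolution $\underline u=t\pi_2^*w+\varphi$: at a boundary point its tangential block $\underline{\mathfrak g}_{\alpha\bar\beta}$ already carries $m-1$ eigenvalues of size $\sim t$ in the $\partial\Omega$-tangent directions, so the argument vector $((1-t_0)\underline\lambda',R_0)$ in \eqref{key-03-yuan3v} has $m$ entries that can be simultaneously pushed large, which is the shape of \eqref{unbounded-buchong-m}. But to conclude one must still verify that the $(1-t_0)$ factor does not destroy this largeness, i.e.\ that the bound $(1-t_0)^{-1}\le C$ from Lemma~\ref{keylemma1-yuan3} can be taken with $C$ growing slower than $t$ as $t\to\infty$; otherwise $(1-t_0)\,\underline\lambda'$ stays bounded and the argument collapses. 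This dependence is not tracked in the paper, and until it is, the step is a gap, not an adaptation. (The secondary issue --- your assertion that $\sup_M\psi<\sup_\Gamma f$ follows from $\inf_M\psi>\sup_{\partial\Gamma}f$ and \eqref{unbounded-buchong-m} --- also does not follow logically; it is a standard implicit hypothesis that should be stated rather than deduced.)
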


\end{document}